\newcommand{\vol}{\mathrm{vol}}
\newcommand{\Betadistr}{\mathrm{Beta}}
\newcommand{\Beta}{\mathrm{B}}
\newcommand{\Res}{\mathrm{Res}}
\newcommand{\bfal}{\boldsymbol{\alpha}}
\newcommand{\bp}{\mathbf{P}}
\newcommand{\be}{\mathbf{E}}
\begin{document}

\begin{abstract}
We discuss the spectral asymptotics of some open subsets of the real line with random fractal boundary and of a random fractal, the continuum random tree. In the case of open subsets with random fractal boundary we establish the existence of the second order term in the asymptotics almost surely and then determine when there will be a central limit theorem which captures the fluctuations around this limit. We will show examples from a class of random fractals generated from Dirichlet distributions as this is a relatively simple setting in which there are sets where there will and will not be a central limit theorem. The Brownian continuum random tree can also be viewed as a random fractal generated by a Dirichlet distribution. The first order term in the spectral asymptotics is known almost surely and here we show that there is a central limit theorem describing the fluctuations about this, though the positivity of the variance arising in the central limit theorem is left open. In both cases these fractals can be described through a general Crump-Mode-Jagers branching process and we exploit this connection to establish our central limit theorems for the higher order terms in the spectral asymptotics. Our main tool is a central limit theorem for such general branching processes which we prove under conditions which are weaker than those previously known.\\
{\bf MSC:} 28A80 (primary), 60J80, 35P20 (secondary).
\end{abstract}

\title[Central limit theorems for the spectra of random fractals]{Central limit theorems for the spectra\\
of classes of random fractals}
\author{Philippe H.\ A.\ Charmoy, David A.\ Croydon and Ben M.\ Hambly}
\address{Mathematical Institute, Radcliffe Observatory Quarter, Oxford OX2 6GG, United Kingdom}
\address{Department of Statistics, University of Warwick, Coventry CV4 7AL, United Kingdom}
	
\date{\today.}

\maketitle

\section{Introduction}

Let $D$ be a non-empty bounded open subset of $\R^d$ for $d\geq 1$ and let $\Delta$ be the Dirichlet Laplacian on $D$.
Then the spectrum $\Lambda$ of $- \Delta$ is discrete and forms a positive increasing sequence
$$
0 < \lambda_1 \leq \lambda_2 \leq \cdots,
$$
where the eigenvalues are repeated according to their multiplicity. Interest in the geometric information about
$D$ encoded by $\Lambda$ started a little over 100 years ago and was crystallised by Kac in his paper
\cite{Kac1966} entitled `Can one hear the shape of a drum?' Or more precisely, does $\Lambda$ determine
$D$ up to isometry? The answer to that question is no in general, as shown in \cite{Gordonetal1992, Milnor1964};
see also \cite{Buseretal1994} for a concise presentation of a family of counterexamples.

However some geometric information about $D$ can be recovered. Weyl's theorem shows that the eigenvalue
counting function $N$ defined by
$$
N(\lambda) = \#\{\lambda_i:\lambda_i \leq \lambda\}
$$
has asymptotic expansion
$$
N(\lambda) = c_1(d) \vol_d(D)\lambda^{d/2} + o(\lambda^{d/2}),
$$
as $\lambda \to \infty$, for some constant $c_1(d)$ depending only on $d$, where $\vol_d$ denotes the
$d$-dimensional Lebesgue measure. Aside from prompting Kac's question this result has led to a large body
of work on the behaviour of the eigenvalue counting function and we now give a very brief description of the
results that have motivated the work we will present here.

As a first extension it is natural to ask about the second order term in this expansion. If $\partial D$ is smooth, then under some assumptions, that there are not too many periodic geodesics, the expansion has a second order term
$$
N(\lambda) = c_1(d) \vol_d(D) \lambda^{d/2} - c_2(d) \vol_{d-1} (\partial D) \lambda^{(d-1)/2} + o(\lambda^{(d-1)/2}),
$$
as $\lambda \to \infty$, for some other constant $c_2(d)$ depending only on $d$. The reader is referred
to \cite{ivrii80, Lai1981, Seeley1978, Seeley1980, Vassiliev1990} and references therein for more information. This means
that, under some regularity conditions, we can recover the size of the domain and that of the boundary from the
spectral asymptotics; in particular, using the isoperimetric inequality, we can determine whether or not $D$ is an open ball.

Interest in the second term of the expansion of $N$ grew further when Berry studied the spectral asymptotics of
domains with a fractal boundary in \cite{Berry1979, Berry1980}. He conjectured that the Hausdorff dimension of
$\partial D$ should drive the second order term. Brossard and Carmona in
\cite{BC1986} studied the associated partition function, a smoothed version of the eigenvalue counting function,
and showed that the Minkowski dimension, $d_M$, was the relevant notion of dimension for the second order
term in the short time expansion of this function.
For the counting function itself a general result of Lapidus \cite{lap1991} shows that, if $d-1<d_M\leq d$, the second order term is of
order $O(\lambda^{d_M/2})$ provided the Minkowski content of the boundary is finite. In general it is difficult to determine
the precise order of growth for the second order term for arbitrary boundaries, however for one-dimensional
domains \cite{LP1993} it was shown that the Minkowski dimension captures the order of growth of the second term
in the asymptotics and the Minkowski content, the constant, when they exist.

The problem of determining the spectral asymptotics has also been considered for sets which are themselves fractal. For some
classes of fractal, such as the Sierpinski gasket, or more generally p.c.f. self-similar sets \cite{Kigami2001} or
generalised Sierpinski carpets \cite{BB1999}, a Laplacian can be defined and shown to have a discrete spectrum.
The exponent for the leading order growth rate in the eigenvalue counting function is called the spectral dimension
and differs from the Hausdorff or Minkowski dimension of the set. If the fractal has enough symmetry, such as for instance
the Sierpinski gasket, then a Weyl type theorem is no longer true \cite{FS1992}, \cite{BarK1997} in that the rescaled limit of
the eigenvalue counting function does not converge. However the Weyl limit does exist for `generic' deterministic p.c.f. self-similar
sets \cite{KL1993} and also for random Sierpinski gaskets \cite{Hambly2000}
and it is natural to ask about the growth of the second order term in these settings.

Our aim is to consider some random fractals where we anticipate more generic behaviour of the counting function.
We will consider both domains with fractal boundaries and fractal sets here. Firstly we will consider the case of open subsets
with fractal boundaries in the
one-dimensional case of a so called fractal string. Our second case will be an example where the set itself is a fractal,
the continuum random tree. In both cases the first order terms in the spectral asymptotics due to the fractal structure
are understood and we will focus on the behaviour of the second order terms.

A fractal string is a set obtained as the complement of a Cantor set in the
unit interval, so can be thought of as a sequence of intervals of decreasing length \cite{LvF2013}. The Dirichlet Laplacian
is then the union of the Dirichlet Laplacians on each interval. Some discussion of the spectral asymptotics of random
fractal strings can be found in \cite{HL2006} where it is shown that for Cantor sets constructed via random iterated
function systems, the second order term due to the boundary exists almost surely. We will consider a suitable
subset of these random fractal strings and determine when the order of the fluctuations about
the boundary term is given by a central limit theorem (CLT).

This turns out to be a subtle question and the existence of a CLT is determined by the rate of convergence in an associated
renewal theorem. We will give a precise statement after introducing all the terminology in Theorem~\ref{thm::spectralAsympString}.
We will then show that when the fractal is generated using a Dirichlet distribution, the existence of a central limit theorem depends
on the particular Dirichlet distribution considered.

An example of what we are able to show is the following. Let $S_{\gamma,\alpha}$, for
$\gamma \in (0,1), \alpha \in \mathbb{N}$, be the random fractal string obtained as the complement of the random Cantor
set generated by subdividing any interval of length $\ell$ into three, retaining two intervals of size $T_1^{1/\gamma}\ell,
T_2^{1/\gamma}\ell$, and removing one of length $\ell(1-T_1^{1/\gamma}-T_2^{1/\gamma})$, where the pair
$(T_1,T_2)$ is independent for each interval and distributed as Dirichlet($\alpha,\alpha$) (that is a Beta($\alpha,\alpha$) distribution
in this simple case) and $0<\gamma<1$. We write $\bp$ for the probability law for the random fractal string and $\be$ for
expectation with respect to $\bp$.
We note that $\gamma$ will be the Minkowski dimension of the random Cantor set $\bp$-almost surely,
that is the dimension of the boundary of the string. We write $N_{\gamma,\alpha}(\lambda)$ for the associated eigenvalue
counting function.

\begin{theorem}\label{thm:ex1} (i) For all $\alpha \in \mathbb{N}$ and $\gamma\in (0,1)$ there is a strictly positive deterministic
constant $C(\gamma,\alpha)$ such that as $\lambda\to\infty$
\[ \lambda^{-\gamma/2}\left(\frac1{\pi} \lambda^{1/2} - N_{\gamma,\alpha}(\lambda)\right) \to C(\gamma,\alpha) \;\;
\mbox{$\bp$-almost surely.} \]
(ii) If $\alpha\leq 59$, then there exists a strictly positive deterministic constant $\sigma(\alpha)$ such that
as $\lambda\to\infty$
\[ \lambda^{\gamma/4}\left(\lambda^{-\gamma/2}\left(\frac1{\pi} \lambda^{1/2} -
N_{\gamma,\alpha}(\lambda)\right) - C(\gamma,\alpha)\right) \to Z, \;\;\mbox{in distribution} \]
where $Z$ is normally distributed with mean 0 and variance $\sigma(\alpha)^2\in (0,\infty)$.\\
(iii) There exists an $\tilde{\alpha}>80$ and a $\gamma\in (0,1)$ such that: if $59<\alpha<\tilde{\alpha}$, then there exists a not-identically-zero periodic function $p_{\gamma,\alpha}(x)$ such that
\[ \be  N_{\gamma,\alpha} (\lambda) = \frac1{\pi} \lambda^{1/2} - C(\gamma,\alpha)\lambda^{\gamma/2}
+ p_{\gamma,\alpha}(\log \lambda) \lambda^{\gamma\eta(\alpha)/2} + o(\lambda^{\eta(\alpha)}), \]
where $\eta(\alpha)=\max\{\Re(\theta_0)\in (-\infty,1):P(\theta_0)=0\}$,
\[ P(\theta):=\prod_{i=0}^{\alpha-1}(\alpha+\theta+i) - \frac{(2\alpha)!}{\alpha!}\]
and, for this range of $\alpha$ we have $1/2< \eta(\alpha) < 1$. In particular
\[ \lambda^{\gamma/4}\left(\lambda^{-\gamma/2}\left(\frac1{\pi} \lambda^{1/2} - N_{\gamma,\alpha}
(\lambda)\right) - C(\gamma, \alpha)\right) \]
does not converge in distribution as $\lambda\to\infty$.
\end{theorem}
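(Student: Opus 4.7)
The plan is to encode the random fractal string as a Crump--Mode--Jagers (CMJ) branching process, so that the fluctuations of $N_{\gamma,\alpha}$ around its leading Weyl asymptotic become a CMJ sum of a bounded characteristic; the three parts then follow from three classical outputs of CMJ theory (strong law, central limit theorem, and Mellin/renewal analysis of the mean), each conditional on a concrete complex-analytic fact about the roots of $P(\theta)$.

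Set $F(\lambda):=\tfrac{1}{\pi}\sqrt\lambda-N_{\gamma,\alpha}(\lambda)$. The recursive construction of the Cantor set yields, with $F^{(1)},F^{(2)}$ independent copies of $F$,
\[ F(\lambda)=F^{(1)}(T_1^{2/\gamma}\lambda)+F^{(2)}(T_2^{2/\gamma}\lambda)+\bigl\{(1-T_1^{1/\gamma}-T_2^{1/\gamma})\sqrt\lambda/\pi\bigr\}. \]
In the logarithmic variable $s=\tfrac12\log\lambda$ this is exactly a two-child CMJ process with birth displacements $V_i=-\gamma^{-1}\log T_i$ and bounded random characteristic $\chi(u)=\{\pi^{-1}e^{u}\}\mathbf{1}_{\{u<V_1\wedge V_2\}}$, recording the boundary correction of a single parent interval. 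The Malthusian equation $\be[\sum_i e^{-\theta V_i}]=1$ becomes $2\,\be[T^{\theta/\gamma}]=1$; setting $\theta=\gamma s$ and telescoping $\Gamma(2\alpha+s)/\Gamma(\alpha+s)=\prod_{i=0}^{\alpha-1}(\alpha+s+i)$ shows that this is precisely $P(s)=0$, with Malthusian root $s=1$ (i.e.\ Malthusian parameter $\theta=\gamma$) giving the leading boundary exponent $\gamma/2$.

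Part (i) then follows from Nerman's strong law for CMJ processes counted by $\chi$: the non-lattice condition is immediate since $T\sim\Betadistr(\alpha,\alpha)$ is absolutely continuous on $(0,1)$, the bound $|\chi|\leq 1$ gives all moment requirements, and $C(\gamma,\alpha)$ is the usual ratio of the Mellin integral of $\chi$ at the Malthusian parameter to the derivative of the offspring Laplace transform; it is strictly positive because $\chi$ is non-negative and not identically zero. For part (ii), I would invoke the CLT for CMJ processes proved later in this paper: the $\lambda^{\gamma/4}$ scaling corresponds to $e^{\gamma s/2}$ on the CMJ side, so the applicable hypothesis reduces to $\eta(\alpha)<1/2$ (all non-Malthusian roots of $P$ strictly inside the halfway strip). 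I would verify this bound for each $\alpha\in\{1,\dots,59\}$ by explicit root-finding for small $\alpha$ combined with a Rouch\'{e}-type argument on $\{\Re s\geq 1/2\}$ using Stirling's approximation applied to the Gamma-ratio form of $P$; positivity of $\sigma(\alpha)^2$ reduces to non-degeneracy of the associated martingale limit, which follows because $\chi$ is genuinely non-constant on every dyadic scale.

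For part (iii), $\be N_{\gamma,\alpha}$ satisfies a linear renewal equation whose Mellin transform is a rational function of $\be[T^{s}]$, i.e.\ of $1/P(s)$. Shifting the Mellin inversion contour past the Malthusian pole at $s=1$ accrues, at each further root $s_0$ of $P$, a residue contribution of order $\lambda^{\gamma\Re(s_0)/2}e^{i\gamma\Im(s_0)\log\lambda/2}$, which assembles into a non-trivial log-periodic function $p_{\gamma,\alpha}$ whenever the dominant non-Malthusian root is complex. It therefore suffices to exhibit $\tilde\alpha>80$ and a $\gamma\in(0,1)$ such that for every $\alpha\in(59,\tilde\alpha)$ the dominant subleading root of $P$ is non-real with real part in $(1/2,1)$; on that scale the oscillation of amplitude $\lambda^{\gamma\eta(\alpha)/2}\gg\lambda^{\gamma/4}$ precludes any distributional limit. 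The main obstacle throughout is this complex-analytic study of the roots of $P$: pinpointing the threshold at $\alpha=59$ where $\eta(\alpha)=1/2$, and exhibiting an explicit pair $(\alpha,\gamma)$ just above the threshold with a complex dominant subleading root. Because $P$ has factorially growing coefficients, this requires uniform Stirling-type estimates to localise the roots together with perturbative or numerical analysis near the critical strip; by contrast the CMJ inputs themselves enter almost as a black box once the root geometry of $P$ is in hand.
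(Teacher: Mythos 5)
Your CMJ encoding, use of Nerman's strong law, the CLT from Section~\ref{bpcltsec}, and the Mellin/renewal analysis all match the paper's route, so the overall architecture is sound. But part~(iii) as written has a genuine gap: you assert that once the dominant non-Malthusian root $\theta_0$ of $P$ is complex with $\Re\theta_0\in(1/2,1)$, the residue contributions ``assemble into a non-trivial log-periodic function.'' That does not follow. The coefficient of the oscillatory term is the product of the residue of $g(w)=1/(1-f(w))$ at the corresponding pole (nonzero once the root is simple) and the value $\int_0^\infty e^{-\gamma(1-\rho_\pm)y}\,\mathbf{E}\phi(y)\,dy$ of a transform of the expected characteristic; the latter could in principle vanish, in which case $p_{\gamma,\alpha}\equiv 0$ and the failure of convergence in distribution does not follow. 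The paper's proof of (iii) spends most of its length precisely on this point: it derives a convergent series with a rigorous tail bound for $Re^{\pm i\theta}$ and then verifies numerically that $R>0$ for each $\alpha\in\{60,\dots,80\}$ (hence $\tilde\alpha\geq 81$), with the values tabulated in the Appendix. Without some such nonvanishing check, your conclusion is unsupported.

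Two smaller points. Your characteristic $\chi(u)=\{\pi^{-1}e^u\}\mathbf 1_{\{u<V_1\wedge V_2\}}$ does not match the recursion you display just above it: the fractional part should be of $\pi^{-1}Se^u$ with $S=1-T_1^{1/\gamma}-T_2^{1/\gamma}$ the length of the removed gap (the paper's $\phi(t)=\bar N_{[0,1]}(S^2e^{2t})$), and no indicator cut-off is needed since that $\phi$ is already bounded. And in part~(ii), ``positivity of $\sigma(\alpha)^2$ reduces to non-degeneracy of the martingale limit'' is the wrong reduction here: for $\Delta_n$-GBPs the fundamental martingale is identically $1$, so its limit is trivially non-degenerate; what actually must be (and is, at \eqref{eq::finiteLimitingVariance}) verified is that the centred characteristic $\bar\zeta^\phi$ has strictly positive asymptotic variance. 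Lastly, the Rouch\'e/Stirling localisation of roots you propose for establishing $\eta(\alpha)<1/2$ rigorously for all $\alpha\leq 59$ goes beyond what the paper achieves (analytic verification only for $\alpha\leq 4$ in Lemma~\ref{lem::casesWhereWorks}, numerics otherwise); you would need to actually carry that argument out for it to count.
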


\begin{remark}{\rm
(1) The first result gives the almost sure behaviour of the second term in the counting function asymptotics and is
true for random fractal strings constructed using a wide class of distributions on the simplex.\\
(2) In part (iii) we conjecture that it is possible to take $\tilde{\alpha}=\infty$ and any $\gamma \in (0,1)$.
Indeed, towards proving the above
result, we first provide conditions under which a CLT holds (see Theorem \ref{thm::spectralAsympString} and
Section \ref{subsec::numericalExample}), and explain when one will not (see Remark
\ref{rmk::sharpSpeedConvergence}). This distinction is determined by the rate of convergence in a related renewal
theorem and depends on the values of the roots of $P(\theta)=0$,
which we solve numerically (we can also solve this equation analytically for small values of $\alpha$). These computations
demonstrate that we can take $\tilde{\alpha}$ to be at least $81$. Furthermore, although we are not able to prove it
rigorously, the monotonicity of the results suggests that $\tilde{\alpha}$ can be taken arbitrarily large.\\
(3) We also conjecture that, in the case where there is no CLT, i.e.\ $\alpha>59$, the size of the second order term is
determined by $\eta(\alpha)$, in that, $\bp$ almost surely for $\epsilon>0$,
\[ N_{\gamma,\alpha}(\lambda) =  \frac1{\pi} \lambda^{1/2} - C(\gamma,\alpha) \lambda^{\gamma/2} +
O(\lambda^{\gamma\eta(\alpha)/2+\epsilon}), \]
where $1/2< \eta(\alpha)< 1$ and $\eta(\alpha)\to 1$ as $\alpha\to\infty$.\\
(4) The proof of the above result shows that the period of $p_{\gamma,\alpha}$ is given by $4\pi/\gamma|\mathfrak{I}(\theta_0)|$,
where $\theta_0$ is one of the complex conjugate pair of roots whose real part gives $\eta(\alpha)$.}
\end{remark}

Observe that, as $\alpha$ increases, the Beta$(\alpha,\alpha)$ distribution becomes closer to the
distribution given by a delta measure at the point (1/2,1/2). If we take $\gamma = \ln 2/\ln 3$, then we anticipate that
our random fractal string should converge (in a suitable sense) to the Cantor string (the string formed as the complement of
the classical ternary Cantor set) as $\alpha$ goes to infinity. It is known that for the Cantor string there is a non-constant periodic
function that appears in the second order term in the counting function asymptotics \cite{LvF2013}. Thus our result suggests
that there is a non-trivial transition in the parameter space from the case where there is `enough randomness' for a CLT about
the second order term, to the case where there is not, through to the limit, where there is not even a strong law of large numbers
for this term.

We will also consider the case of the Brownian continuum random tree, a random self-similar fractal. It was shown
in \cite{CH2008} that there was a Weyl limit for the counting function in this case. It was also shown that the
second order term for this fractal set was of order 1 in mean -- which would be anticipated as the boundary of the
tree is just two points, a 0-dimensional set. In this paper we show that there is a CLT about the almost sure
asymptotics. However at this point we have not shown strict positivity of the variance due to the complexity of
the correlation structure in the variance of the limit of the rescaled counting function. We conjecture that there
will be a non-trivial CLT for this counting function. This will show that the randomness in the construction means the
second order term in the spectral asymptotics is determined by the fluctuations about the leading order term, as
these are much greater than the effects due to the boundary of the set.

The main technical tool we develop is a central limit theorem for the general Crump-Mode-Jagers branching process.
In our setting the random fractal sets, the random Cantor set boundary of the string, or the continuum random tree, can
be encoded as general branching processes. We are able to use a characteristic associated with these processes to
determine the behaviour of the counting function. In this case there may be dependence on the offspring of an
individual and we obtain a CLT in this more general setting, extending the work of \cite{JN1984}. We also remark that
the techniques used here can easily be applied to geometric counting functions or other functions associated with
heat flow, such as the partition function or heat content of the set. We anticipate similar behaviour in the fluctuations
of these quantities about their almost sure limits.

The paper is organised as follows. In Section \ref{bpcltsec}, we recall the definition of the general branching process
and some laws of large numbers for such processes. We then prove our central limit theorem for the general branching
process using a Taylor expansion proof. In Section \ref{sec::dirichletWeights}, we restrict ourselves to general branching
processes where a suitable function of the birth times is chosen to lie on an $n$-dimensional simplex, which will ensure
that the limit of the usual branching process martingale is a constant. We will call such processes $\Delta_n$-GBPs and
discuss extensively how to establish the conditions required for the central limit theorem in this setting
as this will allow us to illustrate when we do and do not have a central limit theorem for the associated general branching process.
In Section \ref{sec::CantorStrings}, we define a family of open subsets $U$ of $[0,1]$ whose random boundary is
a statistically self-similar Cantor set built using scale factors on the simplex. We are then able to show our main result which gives
conditions for the existence of a central limit theorem. In Section 5 we consider some examples where the law of
the $\Delta_n$-GBP is given by a Dirichlet distribution. We show
that, for some Dirichlet weights, the eigenvalue counting function of the set $U$ satisfies a central limit theorem. As
a consequence we will be able to establish Theorem~\ref{thm:ex1}. In Section~6 we turn to the
continuum random tree. We recall that this tree can be viewed as a random self-similar set and how to construct a
Laplace operator on it. We then show that the conditions for the general branching process central limit theorem hold
and hence there is a CLT in the spectral asymptotics.

\subsection*{Notation}

For convenience, we will use the shorthand notation $c_i$ with $i \in \N$ to mean some positive constant whose value is fixed for the length of a proof or a subsection.

\section{A central limit theorem for general branching processes}\label{bpcltsec}

\subsection{General branching processes}\label{bpsubsec}

In this subsection, we introduce the general or C-M-J branching process. The presentation is inspired by
\cite{Hambly2000, Jagers1975, Nerman1981}, to which the reader is referred for further information.

In the general branching process, the typical individual $x$ is born at time $\sigma_x$, has offspring whose \emph{birth times}
are determined by a point process $\xi_x$ on $(0, \infty)$, a \emph{lifetime} modelled as a non-negative random variable $L_x$,
and a (possibly random) c\`adl\`ag function $\phi_x$ on $\R$ called a \emph{characteristic}.

We index the individuals of the general branching process using the address space
\begin{equation}\label{eq::addressSpace}
I = \bigcup_{k \geq 0} \N^k, \quad \text{where} \quad  \N^0 = \emptyset.
\end{equation}
The ancestor $\emptyset$ is born at time $\sigma_\emptyset = 0$, and individual $x$ has $\xi_x(0, \infty)$ offspring whose
birth times $\sigma_{x, i}$ satisfy
$$
\xi_x = \sum_{i = 1}^{\xi_x(\infty)} \delta_{\sigma_{x,i} - \sigma_x},
$$
where $\delta$ is the Dirac measure and $x,i$ is the concatenation of $x$ and $i$. The trace of the underlying Galton-Watson
process is a random subtree of $I$ which we denote by $\Sigma$.
We write $\partial \Sigma$ for the set of infinite lines of
descent in the process. For $x,y\in\Sigma$ we also use the notation $x\leq y$ if there exists a sequence $(z_1,\dots,z_k)$ with
$z_i\in \N, i=1,\dots,k$ with $k\in \N$ such that $y=(x,z_1,\dots,z_k)$. Similarly, for $x\in\Sigma$, $y\in\partial\Sigma$ we write $x\leq y$ if there exists a sequence $(z_1,z_2,\dots)$ with $z_i\in \N, i=1,2,\dots$ such that $y=(x,z_1,z_2,\dots)$. A cut-set $\cC$ of $\Sigma$ is a collection of $x \in \Sigma$
such that $x\not\leq x'$ and $x'\not\leq x$ for all $x'\neq x\in \cC$ and $\forall y \in \partial \Sigma$ there is an $x\in\cC$
such that $x\leq y$.

It is customary to assume that the triples $(\xi_x, L_x, \phi_x)_x$ are i.i.d.\ but we allow $\phi_x$ to depend on the progeny of $x$; we
also do \emph{not} make any assumptions on the joint distribution of $(\xi_x, L_x, \phi_x)$. When discussing a generic individual, it is
convenient to drop the dependence on $x$ and write $(\xi, L, \phi)$. We will write $\bp$ for the associated probability law and $\be$
for its expectation.

We define
$$
\xi(t) = \xi((0, t]), \quad \nu(dt) = \bE \xi(dt), \quad \xi_\gamma(dt) = e^{-\gamma t} \xi(dt), \quad \text{and} \quad \nu_\gamma(dt) = \bE \xi_\gamma(dt),
$$
for $\gamma \in (0,\infty)$. Furthermore, we will always assume that the general branching process has \emph{Malthusian growth}, i.e.\ that there exists a \emph{Malthusian parameter} $\gamma \in (0, \infty)$ for which
$\nu_\gamma(\infty) = 1$. This implies, in particular, that the general branching process is super-critical.

We denote the moments of the probability measure $\nu_\gamma$ by
\begin{equation}\label{firstmoment}
\mu_k = \int_0^\infty s^k \nu_\gamma(ds).
\end{equation}
In all cases of interest to us, $\mu_1$ will be finite. Note, however, that some convergence results still hold when that is not the case, as explained in \cite{Nerman1981}.

The presence of the characteristic $\phi$ in the population is captured using the \emph{characteristic counting process} $Z^\phi$ defined as
\begin{equation}
Z^\phi(t) = \sum_{x \in \Sigma} \phi_x (t - \sigma_x) = \phi_\emptyset(t) + \sum_{i = 1}^{\xi_\emptyset(\infty)} Z_i^\phi(t- \sigma_i), \label{eq:ccp}
\end{equation}
where the $Z^\phi_i$ are i.i.d.\ copies of $Z^\phi$. An important example in the study of random fractals is the characteristic $\phi(t) = (\xi(\infty)- \xi(t))\mathbf{1}_{[0,\infty)}(t)$,
whose corresponding counting process $Z^\phi$ has the property that $Z^\phi(t)$ is the number of offspring born after time $t$ to parents born up to time $t$. Later, we will define characteristics that count eigenvalues of the Dirichlet Laplacian.

There are two central elements in the study of the asymptotics of the counting process. The first is that the functions
$$
z^\phi(t) = e^{-\gamma t} \bE Z^\phi(t) \quad \text{and} \quad u^\phi(t) = e^{-\gamma t} \bE \phi(t),
$$
satisfy the well-studied renewal equation
\begin{equation}
z^\phi(t) = u^\phi(t) + \int_0^\infty z^\phi(t-s) \nu_\gamma(ds);\label{eq:zrenewal}
\end{equation}
see \cite{Feller1968} for a classic exposition and \cite{Jagers1975, Karlin1955, LV1996} for alternative results.

The second is the process defined by
\[M_t = \sum_{x \in \Lambda_t} e^{-\gamma \sigma_x},\]
where
\[\Lambda_t = \{x\in\Sigma:\:x = (y, i) \mbox{ for some }y\in\Sigma,i\in\N,\mbox{ and }\sigma_{y} \leq t < \sigma_x\}\]
is the set of individuals born after time $t$ to parents born up to time $t$. The process $M$ is a non-negative c\`adl\`ag $\cF_t$-martingale with unit expectation, where
$$
\cF_t = \sigma(\cF_x, \sigma_x \leq t) \quad\text{and} \quad \cF_x = \sigma(\{(\xi_y, L_y): \sigma_y \leq \sigma_x\});
$$
we call it the \emph{fundamental martingale} of the general branching process.

The martingale convergence theorem shows that $M_t \to M_\infty$ as $t \to \infty$, almost-surely, for some random
variable $M_\infty$. Furthermore, under an $x \log x$ condition standard in the theory of branching processes, $M$ is
uniformly integrable. More precisely, in \cite{Doney1972, Doney1976}, Doney proved the following result.

\begin{theorem}[Doney]
	The following are equivalent:
	\begin{enumerate}
		\item $\bE \left[\xi_\gamma(\infty) (\log \xi_\gamma(\infty))_+ \right] < \infty$;
		\item $\bE M_\infty > 0$;
		\item $\bE M_\infty = 1$;
		\item $M_\infty > 0$ almost surely on the set where there is no extinction;
		\item $M$ is uniformly integrable.
	\end{enumerate}
	Otherwise, $M_\infty = 0$ almost surely.
\end{theorem}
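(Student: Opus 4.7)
The plan is to follow the structure of the classical Kesten-Stigum theorem, adapted to the C-M-J setting via the branching recursion satisfied by $M_\infty$. First I would establish the fundamental almost-sure recursion: writing $M_t$ as a sum over the subtrees rooted at the children of $\emptyset$ and letting $t \to \infty$ yields
\[
M_\infty = \sum_{i=1}^{\xi_\emptyset(\infty)} e^{-\gamma \sigma_i} M_\infty^{(i)},
\]
where $(M_\infty^{(i)})_{i \geq 1}$ are i.i.d.\ copies of $M_\infty$, independent of the first-generation data $(\xi_\emptyset, (\sigma_i)_i)$. This identity, together with the branching structure of the filtration $\cF_t$, is the source of every subsequent manipulation.

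Given this recursion, the argument is organised as a dichotomy: either condition (i) holds and $M$ is uniformly integrable, or (i) fails and $M_\infty = 0$ almost surely, in which case (ii)-(v) all trivially fail. The heart of the theorem is therefore the equivalence (i) $\Leftrightarrow$ (v), the C-M-J analogue of Kesten-Stigum. For the forward direction (i) $\Rightarrow$ (v), the cleanest modern proof proceeds via a Lyons-Peres-style size-biasing construction: build a new measure $\tilde{\bp}$ on marked trees with a distinguished spine by biasing the law of the first-generation point process $\xi_\gamma$ by its total mass, and by choosing the spinal descendant with probability proportional to $e^{-\gamma \sigma_i}$. Under $\tilde{\bp}$, $M_t$ appears as a Radon-Nikodym derivative between $\bp$ and $\tilde{\bp}$ restricted to $\cF_t$, and standard measure theory reduces uniform integrability under $\bp$ to $\tilde{\bp}(M_\infty < \infty) = 1$. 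A Borel-Cantelli argument along the spine, together with an i.i.d.\ decomposition of the spinal increments, then shows that the latter holds precisely when $\be[\xi_\gamma(\infty)(\log\xi_\gamma(\infty))_+] < \infty$, which gives both (i) $\Rightarrow$ (v) and its converse. Doney's original argument predates the spinal-decomposition machinery and instead uses a direct truncation combined with a second-moment estimate to isolate the same logarithmic moment.

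The remaining implications are elementary consequences of the recursion and standard martingale theory. The implication (v) $\Rightarrow$ (iii) follows from Vitali's theorem applied to $\be M_t = 1$, and (iii) $\Rightarrow$ (ii) is obvious. For (ii) $\Rightarrow$ (iv), observe from the recursion that $q := \bp(M_\infty = 0)$ satisfies $q = \be[q^{\xi_\emptyset(\infty)}]$, so $q$ is a fixed point of the probability generating function of the underlying Galton-Watson tree $\Sigma$; Malthusian growth forces $\Sigma$ to be supercritical, and under (ii) we cannot have $q = 1$, so $q$ must equal the extinction probability, giving $M_\infty > 0$ almost surely on non-extinction. Finally (iv) $\Rightarrow$ (v) because in the alternative regime $\neg$(i) we have already shown $M_\infty \equiv 0$, contradicting (iv).

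The main obstacle is, unsurprisingly, the quantitative step (i) $\Leftrightarrow$ (v): both the spinal-decomposition and the truncation arguments require careful bookkeeping of the joint distribution of $(\xi, \sigma)$ under $\bp$ and its size-biased counterpart, and the Borel-Cantelli step has to be run on a spinal random walk whose increments are not those of a classical Galton-Watson spine. Since the theorem is being invoked verbatim from Doney's work, a full write-up would simply defer to \cite{Doney1972, Doney1976} at this point.
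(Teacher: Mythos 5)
The paper does not give a proof of this statement: it is labelled ``Theorem (Doney)'' and cited directly from \cite{Doney1972, Doney1976}, so there is no argument in the text to compare against. Taken on its own, your sketch is structurally sound: the branching recursion for $M_\infty$, the identification of (i) $\Leftrightarrow$ (v) as the quantitative crux, the Vitali and generating-function steps for (v) $\Rightarrow$ (iii) $\Rightarrow$ (ii) $\Rightarrow$ (iv), and the closing of the loop via the zero--one dichotomy all fit together correctly. The Lyons--Pemantle--Peres spinal construction you outline is a genuinely different (and historically later) route than Doney's original truncation-plus-second-moment argument; in the C-M-J setting the size-biasing must account for the joint law of the offspring point process $\xi$ and the chosen spinal birth time, which is precisely the bookkeeping obstacle you flag. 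Since the paper simply invokes the result as a black box, your decision to defer to the cited references rather than reprove the $x\log x$ equivalence in full is the appropriate one; there is no gap to report.
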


For technical reasons, it is often easier to apply renewal theory under the assumption that $\phi$ vanishes for negative times. When that is not the case, we can set
\begin{equation}\label{eq::oneSidedToTwoSided}
\chi_x(t) = \phi_x(t) \bone_{t \geq 0} + \sum_{i = 1}^{\xi_x(\infty)} Z_{x,i}^\phi(t- \sigma_i) \bone_{0 \leq t < \sigma_i}
\end{equation}
so that
$$
Z^\phi \bone_{[0, \infty)}(t) = \chi_\emptyset(t) + \sum_{i=1}^{\xi_\emptyset(\infty)} Z_i^\phi(t- \sigma_i) \bone_{t- \sigma_i \geq 0}.
$$
This means that $Z^\phi \bone_{[0, \infty)} = Z^\chi$, the counting process of the characteristic $\chi$, and we can then work with $Z^\chi$ instead of $Z^\phi$ because $\chi$ vanishes for negative times and $Z^\chi$ and $Z^\phi$ obviously have the same asymptotics as $t \to \infty$.

\subsection{Application to statistically self-similar fractals}
\label{subsec::appGBPtoFractals}

As discussed in \cite{Falconer1986a, Graf1987, MW1986}, the general branching process provides a natural way to encode statistically self-similar sets. We outline this connection now.

To build a statistically self-similar set $K$, we start with the address space $I$ defined in \eqref{eq::addressSpace} and a non-empty compact set $K_\emptyset$. To each $x \in I$, we associate a random collection $(N_x, \Phi_{x, 1}, \dots \Phi_{x, N_x})_{x \in I}$, where $N_x$ is a natural number and $\Phi_{x,i}$ are contracting similitudes whose ratios we write $R_{x, i}$. We assume that the collection is i.i.d.\ in $x$.

The random numbers $(N_x, x \in I)$ generate a random subtree $\Sigma$ of $I$ defined by $\emptyset \in \Sigma$ and
$$
\quad y = y_1, \dots, y_n \in \Sigma \iff y_1, \dots, y_{n-1} \in \Sigma \text{ and } y_n \leq N_{y_1,\dots, y_{n-1}}.
$$
For $x = x_1, \dots, x_n \in \Sigma$, define
$$
K_x = \Phi_{x_1} \circ \dots \circ \Phi_{x_1, \dots, x_n}(K_\emptyset) \quad \text{and} \quad K = \bigcap_{n = 1}^\infty \bigcup_{|x| = n} K_x,
$$
where $|x|$ is the length of the word $x$. The set $K$ has the intuitive property that it can be written as scaled i.i.d.\ copies of itself, namely,
$$
K = \bigcup_{i=1}^N \Phi_i(K_i),
$$
where $K_1, \dots, K_N$ are i.i.d.\ copies of $K$.

Let us emphasise that the choice of $K_\emptyset$ is not unique in general. However, for technical reasons discussed in \cite{Falconer1986a}, we make the following two assumptions. First, we assume that the sets $(\interior K_x, x \in \Sigma)$ \emph{form a net}, i.e.\
$$
x \leq y \implies \interior K_y \subset \interior K_x
$$
and also
$$
\interior K_x \cap \interior K_y = \emptyset \text{ if neither $x \leq y$ nor $y \leq x$};
$$
the analogue of the \emph{open set condition} for self-similar sets. Second, we assume that the construction of $K$ is proper.  i.e.\ that every cut-set $\cC$ of $\Sigma$ satisfies the condition: for every $x \in \cC$, there exists a point in $K_x$ that does not lie in any other $K_y$ with $y \in \cC$.

The Hausdorff dimension of statistically self-similar sets is almost surely constant on the event that it is not empty and was calculated in \cite{Falconer1986a, Graf1987, MW1986}. It is given in the following result by a formula, the random analogue of that due to Moran \cite{Moran1946} and Hutchinson \cite{Hutchinson1981} familiar from the deterministic setup.

\begin{theorem}
	Let $K$ be a statistically self-similar set. Write $(N, R_1, \dots, R_N)$ for the number of similitudes and their ratios. Then, on the event that the set $K$ is not empty,
	$$
	\dim K = \inf\left\{ s : \bE \left(\sum_{i=1}^N R_i^s \right) \leq 1 \right\} \text{ a.s.}
	$$
\end{theorem}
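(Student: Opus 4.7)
The plan is to encode the random construction as a general branching process by assigning to each individual $x \in \Sigma$ the birth time $\sigma_x = -\log(R_{x_1} R_{x_1, x_2} \cdots R_x)$, i.e.\ minus the logarithm of the accumulated contraction ratio along its address. Under this encoding, the Malthusian equation $\nu_\gamma(\infty) = 1$ of Section \ref{bpsubsec} becomes $\bE \sum_{i=1}^N R_i^\gamma = 1$, so the Malthusian parameter coincides with the candidate dimension $d := \inf\{s : \Phi(s) \leq 1\}$, where I write $\Phi(s) := \bE \sum_i R_i^s$. Under mild non-degeneracy, $\Phi$ is continuous and strictly decreasing near $d$, so $\Phi(d) = 1$. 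The discrete-time analogue of the fundamental martingale of Section \ref{bpsubsec} is then $M_n = \sum_{|x|=n}(R_{x_1}\cdots R_x)^d$, the random Moran--Hutchinson sum.

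For the upper bound I would take the natural cover $(K_x)_{|x|=n}$ of $K$, whose diameters are bounded by $(\diam K_\emptyset)\,e^{-\sigma_x}$. For any $s > d$ one has $\Phi(s) < 1$, so by an iterated conditioning on generations,
\[
\bE \sum_{|x|=n} (\diam K_x)^s \leq (\diam K_\emptyset)^s \, \Phi(s)^n,
\]
which is summable in $n$. Markov's inequality and the Borel--Cantelli lemma then force $\sum_{|x|=n}(\diam K_x)^s \to 0$ almost surely, giving $\mathcal{H}^s(K) = 0$ and hence $\dim K \leq d$ almost surely on the event $\{K \neq \emptyset\}$.

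The lower bound requires a measure-theoretic construction. I would define a random Borel measure $\mu$ on $K$ by prescribing $\mu(K_x) = M_\infty^{(x)} (R_{x_1}\cdots R_x)^d$, where $M_\infty^{(x)}$ denotes the fundamental martingale limit for the GBP rooted at $x$; consistency across generations follows from the martingale property and Kolmogorov extension. Under the $x\log x$ hypothesis of Doney's theorem, $M_\infty^{(x)} > 0$ for every $x$ on the non-extinction event, so $\mu$ is nontrivial precisely when $K \neq \emptyset$. To conclude $\dim K \geq d$, I would apply Frostman's lemma to the $s$-energy $I_s(\mu) = \iint |y-y'|^{-s} d\mu(y)d\mu(y')$ for arbitrary $s < d$ and then let $s \uparrow d$.

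The main obstacle is showing $\bE I_s(\mu) < \infty$ for $s < d$. The natural route is to split the double integral according to the deepest common ancestor of $y$ and $y'$ and exploit the i.i.d.\ subtree structure to obtain a renewal-type recursion; the net condition on $(\interior K_x)_{x \in \Sigma}$ is essential here to prevent overcounting of contributions from distinct children of a common ancestor, and the strict inequality $s < d$ is what makes the resulting geometric-type series converge (contrast with $s = d$, where it diverges marginally). A finite bound on $\bE I_s(\mu)$ gives $I_s(\mu) < \infty$ almost surely; Frostman's lemma then yields $\dim K \geq s$ on $\{K \neq \emptyset\}$, and the lower bound follows by taking $s \uparrow d$ along a countable sequence.
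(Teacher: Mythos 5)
The paper does not actually prove this theorem; it is stated as a known result with citations to Falconer, Graf and Mauldin--Williams, so there is no internal proof to compare against. Your sketch follows the standard potential-theoretic route from that literature. The upper bound via natural covers at generation $n$, iterated conditioning to get $\bE\sum_{|x|=n}(\diam K_x)^s \leq (\diam K_\emptyset)^s\Phi(s)^n$, and Borel--Cantelli is essentially complete, and the measure construction via the martingale identity $M_\infty^{(x)} = \sum_i R_{x,i}^{d} M_\infty^{(x,i)}$ is correct.

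The genuine gap is in the energy estimate. Interior disjointness (the net condition) does not separate sibling cells: $K_{x,i}$ and $K_{x,j}$ may share boundary, so for $y\in K_{x,i}$, $y'\in K_{x,j}$ with $i\neq j$ the distance $|y-y'|$ can be arbitrarily small, and you cannot replace $|y-y'|^{-s}$ by a constant times $\bigl((\diam K_\emptyset) R_{x_1}\cdots R_x\bigr)^{-s}$ as a "split by deepest common ancestor" argument tacitly requires. Your remark that the net condition "prevents overcounting" therefore does not close this step; in the Sierpi\'nski triangle, for instance, the finiteness of $I_s(\mu)$ for $s<d$ relies on a quantitative statement that points of $K_i$ and $K_j$ within distance $r$ of each other both lie near the touching point, together with the mass estimate $\mu(B(p,r))\lesssim r^d$. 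In the random setting the analogous control is exactly what the \emph{proper} construction condition (each cell of a cut-set contains a point avoiding all its siblings) is designed to provide; without it, the formula can genuinely fail, so it is not a dispensable technicality. Relatedly, because random cells at a fixed generation have incomparable diameters, the decomposition should be organised by cut-sets $\Lambda_t$ at a fixed spatial scale rather than by generation, so that all cells contributing at a given stage are of comparable size; as written, the generation-$n$ sums cannot be compared to powers of a common length and the renewal-type recursion you gesture at does not assemble into a convergent series.
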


To specify a general branching process corresponding to the random set $K$, we set
$$
\xi_x = \sum_{i=1}^{N_x} \delta_{-\log R_{x,i}},
$$
and $L_x = \sup_i \sigma_{x,i} - \sigma_x$. With this parametrisation, the set $K_x$ in the construction of $K$ corresponds to an individual born at time $\sigma_x$ and has size $e^{-\sigma_x}$. Furthermore, since
$$
\bE\int_{0}^\infty e^{-s x} \xi(dx) = \bE \left( \sum_{i=1}^N R_i^s \right),
$$
the Malthusian parameter $\gamma$ is equal to the Hausdorff dimension of $K$ by definition.

\subsection{Laws of large numbers}
Before we can prove our central limit theorem for the general branching process, we state Nerman's laws of
large numbers, proved in \cite{Nerman1981}. They are proved for non-negative characteristics with progeny
dependence. In applications, if this is not the case, it suffices to write the characteristic as the difference of its
positive and negative parts.

We start with the weak law of large numbers.
Recall that a measure is said to be lattice if its support is contained in a discrete
subgroup of $\R$ and non-lattice otherwise.

\begin{theorem}\label{thm::NermanWLLN}
	Let $(\xi_x, L_x, \phi_x)_x$ be a general branching process with Malthusian parameter $\gamma$, where $\phi \geq 0$ and $\phi(t) = 0$ for $t < 0$. Assume that $u^\phi$ is directly Riemann integrable and that $\nu_\gamma$ is non-lattice. Assume further that, for every $t$,
	$$
	\bE \left[ \sup_{u \leq t} \phi(u) \right] < \infty.
	$$
	Then,
	$$
	z^\phi(t) \to z^\phi(\infty) = \mu_1^{-1}{\int_0^\infty u^\phi(s) ds},
	$$
	where $\mu_1$ is defined in \eqref{firstmoment}, and
	$$
	e^{-\gamma t} Z^\phi(t) \to z^\phi(\infty) M_\infty, \text{ in probability},
	$$
	as $t \to \infty$, where $M_\infty$ is the almost sure limit of the fundamental martingale of the general branching process. Furthermore, if $M$ is uniformly integrable, then the convergence also takes place in $L^1$.
\end{theorem}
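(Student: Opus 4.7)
The plan is to split the proof into a deterministic renewal-theoretic step and a probabilistic cut-set step. The first assertion is essentially immediate from the renewal equation \eqref{eq:zrenewal}: the Malthusian condition $\nu_\gamma(\infty)=1$ makes $\nu_\gamma$ a probability measure with mean $\mu_1$, which is non-lattice by hypothesis, and $u^\phi$ is directly Riemann integrable by hypothesis; hence Feller's key renewal theorem applied to \eqref{eq:zrenewal} yields $z^\phi(t) \to \mu_1^{-1}\int_0^\infty u^\phi(s)\,ds$, which is the definition of $z^\phi(\infty)$.

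For the convergence of $e^{-\gamma t}Z^\phi(t)$, I would fix a truncation time $s > 0$ and decompose, using the branching property,
$$Z^\phi(t) = \sum_{x\in\Sigma,\, \sigma_x \leq s}\phi_x(t-\sigma_x) + \sum_{y\in\Lambda_s} Z^\phi_y(t-\sigma_y),$$
where $\Lambda_s$ is the cut-set defined earlier in the paper and, conditional on $\cF_s$, the $Z^\phi_y$ are independent copies of $Z^\phi$. Multiplied by $e^{-\gamma t}$, the first (\emph{ancestor}) sum runs over an almost surely finite set of indices; each term is dominated by $e^{-\gamma t}\sup_{u \leq t}\phi_x(u)$ and, thanks to the moment hypothesis $\bE[\sup_{u \leq t}\phi(u)]<\infty$, tends to zero in probability. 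The main sum rearranges as
$$\sum_{y\in\Lambda_s} e^{-\gamma\sigma_y}\,\bigl(e^{-\gamma(t-\sigma_y)}Z^\phi_y(t-\sigma_y)\bigr),$$
in which the weights $e^{-\gamma\sigma_y}$ are $\cF_s$-measurable and satisfy $\sum_{y\in\Lambda_s}e^{-\gamma\sigma_y}=M_s$, while each bracket has $\cF_s$-conditional mean $z^\phi(t-\sigma_y)\to z^\phi(\infty)$ by the first part. A second-moment or truncation argument, using the same moment hypothesis, upgrades this mean convergence to convergence in conditional probability of the whole sum to $z^\phi(\infty)M_s$.

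To finish, letting $s\to\infty$ (say along a slowly increasing subsequence coupled to $t$) and invoking the almost sure convergence $M_s\to M_\infty$ given by the martingale convergence theorem produces $e^{-\gamma t}Z^\phi(t)\to z^\phi(\infty) M_\infty$ in probability. The $L^1$ refinement under uniform integrability of $M$ is obtained by showing that the family $\{e^{-\gamma t}Z^\phi(t)\}_{t\geq 0}$ is itself uniformly integrable; this is done by taking expectations through the same decomposition and using the boundedness of the convergent function $z^\phi$. The main obstacle is the probabilistic step at fixed horizon $s$: one must pass from the purely mean-level statement $z^\phi(t)\to z^\phi(\infty)$ to honest in-probability convergence of the random variables $e^{-\gamma(t-\sigma_y)}Z^\phi_y(t-\sigma_y)$. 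The mechanism that makes this tractable is the conditional independence of the daughter subtrees given $\cF_s$, which enables a second-moment control, together with the moment hypothesis, which both tames the ancestor remainder and supplies the second-moment bound needed to close the argument.
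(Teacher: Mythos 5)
The paper does not prove this theorem: it is stated as Nerman's weak law of large numbers and cited directly to \cite{Nerman1981}, so you are attempting to reconstruct a result the paper treats as established. That said, your outline is worth comparing against what would actually be needed.

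Your treatment of the first assertion (Feller's key renewal theorem applied to \eqref{eq:zrenewal}, using that $\nu_\gamma$ is a non-lattice probability with mean $\mu_1$ and $u^\phi$ is d.R.i.) is correct and is the standard route. Your cut-set decomposition over $\Lambda_s$ is also the right structural idea, and it parallels the decomposition \eqref{eq::tildeZSplitExpression} used later in the paper's proof of the CLT, Theorem~\ref{thm::CLT}.

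There is, however, a genuine gap at precisely the step you flag as the main obstacle. You propose to handle $\sum_{y\in\Lambda_s}e^{-\gamma\sigma_y}\bigl(e^{-\gamma(t-\sigma_y)}Z^\phi_y(t-\sigma_y)\bigr)$ by a conditional second-moment computation. But the theorem is stated with no second-moment hypothesis whatsoever, neither on $\xi$ nor on $Z^\phi$; the only moment-type condition available is $\bE[\sup_{u\leq t}\phi(u)]<\infty$ for each fixed $t$, which gives integrability but no variance control. When $e^{-\gamma t}Z^\phi(t)$ is merely $L^1$-bounded its conditional variance can be infinite, and the concentration you need does not follow from a second-moment bound. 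The truncation scheme that replaces it is the substantive core of Nerman's proof (approximation of the characteristic by bounded characteristics, uniform control over the cut-set, and an intrinsic-martingale argument); simply saying ``a second-moment or truncation argument'' leaves the real content unproved. Relatedly, the CLT in this paper does assume variance-level control (via the boundedness of $v$ and Condition~\ref{cond::momentCondCLT}); those hypotheses are precisely what make the second-moment route viable there, and they are absent here.

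A smaller issue is your justification for the ancestor remainder $e^{-\gamma t}\sum_{\sigma_x\leq s}\phi_x(t-\sigma_x)\to 0$. You argue from the bound $\bE[\sup_{u\leq t}\phi(u)]<\infty$, but that hypothesis fixes one $t$ at a time and carries no information about the growth in $t$, so it does not imply $e^{-\gamma t}\sup_{u\leq t}\phi(u)\to 0$ in probability. The ingredient that actually makes this step work is the direct Riemann integrability of $u^\phi$, which forces $u^\phi(t)=\bE[e^{-\gamma t}\phi(t)]\to 0$ and hence, conditioning on the a.s.\ finite set $\{x:\sigma_x\leq s\}$, gives each summand $e^{-\gamma\sigma_x}\cdot e^{-\gamma(t-\sigma_x)}\phi_x(t-\sigma_x)\to 0$ in $L^1$.
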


The strong law of large numbers requires the following additional regularity condition.

\begin{condition}\label{cond::ghFunCondition}
There exist non-increasing bounded positive integrable c\`adl\`ag functions $g$ and $h$ on $[0, \infty)$ such that
$$
\bE \left[ \sup_{t \geq 0} \frac{ \xi_\gamma(\infty) - \xi_\gamma(t)}{g(t)} \right] < \infty \quad \text{and} \quad \bE \left[ \sup_{t \geq 0} \frac{e^{-\gamma t} \phi(t)}{h(t)} \right] < \infty.
$$	
\end{condition}

This first part of the condition is satisfied if there exists a non-increasing bounded positive function $g$ such that $\nu_\gamma(1/g)$ is finite, because then
$$
\frac{\xi_\gamma(\infty)- \xi_\gamma(t)}{g(t)} \leq \int_t^\infty \frac{1}{g(s)}\xi_\gamma(ds) \leq \int_0^\infty \frac{1}{g(s)} \xi_\gamma(ds),
$$
which has finite expectation. As such, this can be thought of as a moment condition that is weaker than imposing that $\nu_\gamma$ have a finite second moment; take $g(t) = t^{-2} \wedge 1$.

In particular, if the expected number of offspring is finite, this part of the condition is satisfied since, with the latter choice of $g$,
$$
\bE \int_0^\infty g(t)^{-1} e^{-\gamma t} \xi(dt) \leq \sup_{t \geq 0} \{(1 \vee t^{2}) e^{-\gamma t}\} \bE \xi(\infty) < \infty.
$$

We can now state the strong law of large numbers.

\begin{theorem}\label{thm::NermanSLLN}
	Let $(\xi_x, L_x, \phi_x)_x$ be a general branching process with Malthusian parameter $\gamma$, where $\phi \geq 0$ and $\phi(t) = 0$ for $t < 0$. Assume that $\nu_\gamma$ is non-lattice. Assume further that Condition \ref{cond::ghFunCondition} is satisfied. Then,
	$$
	z^\phi(t) \to z^\phi(\infty) = \mu_1^{-1}{\int_0^\infty u^\phi(s) ds},
	$$
where $\mu_1$ is defined at \eqref{firstmoment}, and
	$$
	e^{-\gamma t} Z^\phi(t) \to z^\phi(\infty) M_\infty, \text{ a.s.},
	$$
	as $t \to \infty$, where $M_\infty$ is the almost sure limit of the fundamental martingale of the general branching process. Furthermore, if $M$ is uniformly integrable, then the convergence also takes place in $L^1$.
\end{theorem}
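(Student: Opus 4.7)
The plan is to handle the two convergence statements in sequence, relying on the key renewal theorem for the deterministic part and a cut-set-plus-sandwich argument in the style of Nerman for the almost sure part.

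First I would establish $z^\phi(t) \to z^\phi(\infty)$. The function $z^\phi$ satisfies the renewal equation \eqref{eq:zrenewal}, driven by the non-lattice probability measure $\nu_\gamma$. Condition \ref{cond::ghFunCondition} gives $|u^\phi(t)| = e^{-\gamma t}|\bE \phi(t)| \leq C h(t)$ with $h$ non-negative, non-increasing and integrable, so $u^\phi$ is directly Riemann integrable. The classical key renewal theorem then yields $z^\phi(t) \to \mu_1^{-1} \int_0^\infty u^\phi(s)\,ds = z^\phi(\infty)$.

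Next I would turn to the almost sure convergence of $e^{-\gamma t} Z^\phi(t)$. Fixing $u > 0$, every individual $y \in \Sigma$ with $\sigma_y > u$ has a unique ancestor in the cut-set $\Lambda_u$ introduced just before Doney's theorem; splitting the sum defining $Z^\phi(t)$ accordingly gives, for $t > u$,
\[
e^{-\gamma t} Z^\phi(t) \;=\; e^{-\gamma t}\sum_{\sigma_y \leq u} \phi_y(t-\sigma_y) \;+\; \sum_{x \in \Lambda_u} e^{-\gamma \sigma_x} \bigl[e^{-\gamma(t-\sigma_x)} Z^\phi_x(t-\sigma_x)\bigr],
\]
where $\{Z^\phi_x\}_{x \in \Lambda_u}$ are, conditionally on $\cF_{\Lambda_u}$, i.i.d.\ copies of $Z^\phi$. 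The early term vanishes as $t \to \infty$ by the $h$-bound, since each summand is dominated by $e^{-\gamma \sigma_y} C_y h(t - \sigma_y) \to 0$ with $C_y := \sup_{s\geq 0} e^{-\gamma s}|\phi_y(s)|/h(s)$ almost surely finite.

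To avoid circularity when invoking the very statement being proved on each subtree, I would first establish the result for truncated characteristics $\phi^{(T)}(t) = \phi(t)\bone_{t \leq T}$, for which $Z^{\phi^{(T)}}(t)$ is a sum over only the individuals with $\sigma_x \in [t-T, t]$ and convergence can be extracted directly from the a.s.\ martingale convergence $M_t \to M_\infty$ combined with a Riemann-sum-style argument applied to the deterministic limit from the first step. I would then control the truncation error $Z^{\phi - \phi^{(T)}}(t) = \sum_{\sigma_x \leq t-T} \phi_x(t-\sigma_x)$ uniformly in $t$ using both bounds of Condition \ref{cond::ghFunCondition}: in expectation, $e^{-\gamma t}\bE Z^{\phi-\phi^{(T)}}(t) \leq C \int_T^\infty h(s)\,ds \to 0$ as $T \to \infty$, and an analogous a.s.\ estimate using the $g$-bound via dominated convergence in the sum over generations. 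Combined with the identity $M_\infty = \sum_{x \in \Lambda_u} e^{-\gamma \sigma_x} M^{(x)}_\infty$ (valid a.s.\ by passing to the limit $s \to \infty$ in the cut-set decomposition of $M_s$), sending $T \to \infty$ then $u \to \infty$ yields $e^{-\gamma t} Z^\phi(t) \to z^\phi(\infty) M_\infty$ almost surely. The $L^1$ statement under uniform integrability of $M$ then follows via Vitali's theorem.

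The main obstacle will be the uniform-in-$t$ tail control for the truncation error, which is exactly what Condition \ref{cond::ghFunCondition} is tailored to provide: the $g$-bound guarantees that the residual offspring mass $\xi_\gamma(\infty) - \xi_\gamma(t)$ has enough exponential decay for deep generations to produce a uniformly small contribution to $e^{-\gamma t} Z^\phi(t)$, while the $h$-bound supplies a directly Riemann integrable majorant for all translates of $\phi$. Propagating these two bounds through the cut-set decomposition uniformly in $T$, so that the subtree sum and the truncation error can be interchanged in the limit, is the technical heart of the argument.
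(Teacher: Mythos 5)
The paper does not prove Theorem~\ref{thm::NermanSLLN} itself; it states it as a known result and cites Nerman's original paper~\cite{Nerman1981} for the argument. So there is no internal proof to compare your proposal against, only the cited one.

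Your sketch does follow Nerman's architecture faithfully. The key renewal theorem argument for $z^\phi(t)\to z^\phi(\infty)$ is correct once you note the $h$-bound dominates $u^\phi$ by a nonincreasing integrable function, which together with c\`adl\`ag regularity gives direct Riemann integrability; the cut-set splitting of $Z^\phi(t)$ at $\Lambda_u$, the observation that the finitely many early terms vanish pointwise since $h(t)\to 0$, the subtree identity $M_\infty=\sum_{x\in\Lambda_u}e^{-\gamma\sigma_x}M_\infty^{(x)}$, and the Scheff\'e-type argument for the $L^1$ statement under uniform integrability of $M$ are all correct.

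However, there is a genuine gap at the step you present as routine: the claim that for truncated characteristics $\phi^{(T)}$ the almost-sure convergence ``can be extracted directly from the a.s.\ martingale convergence $M_t\to M_\infty$ combined with a Riemann-sum-style argument.'' The martingale $M_t=\sum_{x\in\Lambda_t}e^{-\gamma\sigma_x}$ controls only one specific weighted functional of the coming generation; it does not, by itself, yield almost-sure convergence of $e^{-\gamma t}\#\{x:\sigma_x\in[t-T,t]\}$, still less of $e^{-\gamma t}\sum_{\sigma_x\in[t-T,t]}\phi_x(t-\sigma_x)$ with a random, progeny-dependent characteristic. If you Riemann-discretise the window, each cell still requires a law-of-large-numbers statement for the (random) characteristic values of the individuals born in it, which is precisely the circularity you were trying to escape. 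In Nerman's proof this is the technical core: he first establishes a.s.\ convergence for a specific family of characteristics tied directly to the reproduction process (this uses $g$ heavily, not just the martingale), and only then propagates the result to general $\phi$ via a two-sided squeeze. Similarly, your uniform-in-$t$ control of the truncation error needs more than ``dominated convergence in the sum over generations''; it requires a maximal-type estimate that is what the $g$-bound is engineered to produce. In short, your outline correctly identifies where the difficulty sits, but the two steps you describe as direct consequences of known facts are exactly where most of the work in \cite{Nerman1981} is located, and as written the proposal does not close them.
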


Similar results have been proved by Gatzouras in the lattice case. We will not use them here and refer the reader to \cite{Gatzouras2000}.

\subsection{The central limit theorem}

In \cite{JN1984}, Jagers and Nerman proved a central limit theorem for the general branching process under the assumptions that the characteristics are i.i.d. We now give a Taylor expansion proof of a similar result, but continue to allow $\phi_x$ to depend on the progeny of $x$. We start by introducing some additional notation.

Consider the general branching process $(\xi_x, L_x, \bar \zeta_x)_x$ with Malthusian parameter $\gamma$. We assume that $\bar \zeta$ is such that
$$
\bar Z(t) := Z^{\bar \zeta}(t)
$$
has zero expectation. In applications, $\bar \zeta$ is typically a suitably centred version of some characteristic $\phi$; we will discuss examples in Section \ref{sec::dirichletWeights}.

We will use the rescaled version $\tilde Z$ of $\bar Z$ defined by
\begin{equation}\label{eq::definitionTildeZ}
\tilde Z(t) = e^{-\gamma t/2} \bar Z(t) = \tilde \zeta_\emptyset(t) + \sum_{i=1}^{\xi(\infty)} e^{-\gamma \sigma_i/2} \tilde Z_i(t -\sigma_i),
\end{equation}
where $\tilde \zeta(t)  = e^{-\gamma t/2} \bar \zeta(t)$.

Finally, to have a proxy for the variance, we define
\begin{equation}\label{eq::varianceBranchingProcess}
V(t) = \bar Z(t)^2 = \rho_\emptyset(t) + \sum_{i = 1}^{\xi(\infty)} V_i(t- \sigma_i),
\end{equation}
where
\[
\rho_\emptyset(t) = \bar\zeta_\emptyset(t)^2 + 2 \bar \zeta_\emptyset(t) \sum_{i = 1}^{\xi(\infty)}
\bar Z_i(t- \sigma_i) + 2 \sum_{i = 1}^{\xi(\infty)} \sum_{j < i} \bar Z_i(t- \sigma_i) \bar Z_j( t- \sigma_j).
\]

As $V$ satisfies an equation of the form \eqref{eq:ccp} which leads to the renewal equation \eqref{eq:zrenewal}, the
functions $v$ and $r$ defined by
\begin{equation}\label{vdefrdef}
v(t) = e^{-\gamma t} \bE V(t) \quad \text{and} \quad r(t) = e^{-\gamma t} \bE \rho(t)
\end{equation}
satisfy the renewal equation
\begin{equation}\label{vrenewal}
v(t) = r(t) + \int_0^\infty v(t-s) \nu_\gamma(ds).
\end{equation}

Our central limit theorem requires two technical conditions which we discuss now.

\begin{condition}\label{cond::integrabilityCondCLT}
	There exists $\epsilon \in (0, 1/2)$ such that
	$$
	e^{-\gamma t/2} \sum_{\sigma_x \leq \epsilon t} \bar \zeta_x(t- \sigma_x) \to 0, \text{ in probability},
	$$
	as $t \to \infty$.
\end{condition}

This is a regularity condition on $\bar \zeta$. In applications, we typically expect $\bar \zeta$ to satisfy a weak law of large numbers. Therefore, the sum should grow like $e^{\gamma \epsilon t}$ and we can expect that the condition is satisfied.

\begin{condition}\label{cond::momentCondCLT}
	There exists $\kappa \in (0, \infty)$ such that
	$$
	\sup_{t \in \R}\bE \{| \tilde Z(t)|^{2+ \kappa}\} < \infty.
	$$
\end{condition}

This is a moment condition. In applications, it is convenient to check it for the third moment, i.e.\ when $\kappa = 1$, because that can be done using renewal arguments.

\begin{theorem}\label{thm::CLT}
	Let $(\xi_x, L_x, \bar \zeta_x)_x$ be a general branching process with Malthusian parameter $\gamma$, where $\bar \zeta$ is such that $\bE\bar Z(t)=0$ for every $t$. Assume that $v$ is bounded and that
	$$
	v(t) \to v(\infty),
	$$
	some finite constant, as $t \to \infty$. Assume further that Conditions \ref{cond::integrabilityCondCLT} and \ref{cond::momentCondCLT} hold.
	Then,
	$$
	\tilde Z(t) \to \tilde Z_\infty, \text{ in distribution},
	$$
	as $t \to \infty$, where the distribution of $\tilde Z_\infty$ is characterised by
	$$
		\bE \left[e^{i \theta \tilde Z_\infty} \right] = \bE\left[e^{- \frac 12 \theta^2 v(\infty) M_\infty}\right].
	$$
\end{theorem}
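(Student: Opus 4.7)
The plan is to establish pointwise convergence of the characteristic function $\varphi_t(\theta) := \bE[e^{i\theta \tilde Z(t)}]$ to $\psi(\theta) := \bE[\exp(-\tfrac{1}{2}\theta^2 v(\infty) M_\infty)]$ for each $\theta \in \R$, via a Taylor expansion of the branching recursion along the cut-set $\Lambda_{\epsilon t}$ introduced in Section \ref{bpsubsec}, with $\epsilon \in (0,1/2)$ as supplied by Condition \ref{cond::integrabilityCondCLT}. Iterating \eqref{eq::definitionTildeZ} along $\Lambda_{\epsilon t}$ yields the decomposition
\[
\tilde Z(t) = R(t) + \sum_{x \in \Lambda_{\epsilon t}} e^{-\gamma \sigma_x/2} \tilde Z_x(t-\sigma_x),
\]
where $R(t) = e^{-\gamma t/2}\sum_{\sigma_x \le \epsilon t} \bar \zeta_x(t-\sigma_x)$ is precisely the quantity in Condition \ref{cond::integrabilityCondCLT}, hence tends to zero in probability, and the $\{\tilde Z_x(t-\sigma_x)\}_{x \in \Lambda_{\epsilon t}}$ are, conditional on the $\sigma$-field $\mathcal{G}_t := \sigma(\Lambda_{\epsilon t}, \{\sigma_x : x \in \Lambda_{\epsilon t}\})$, independent with $\tilde Z_x(t-\sigma_x) \stackrel{d}{=} \tilde Z(t-\sigma_x)$. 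This conditional independence persists in spite of the progeny-dependence of $\bar \zeta$ because the triples $(\xi_y, L_y, \phi_y)$ are i.i.d.\ across the disjoint subtrees rooted at distinct $x \in \Lambda_{\epsilon t}$.

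By Slutsky's theorem the contribution of $R(t)$ is negligible, so it suffices to study
\[
\bE\!\left[\exp\!\Bigl(i\theta \sum_{x \in \Lambda_{\epsilon t}} e^{-\gamma \sigma_x/2} \tilde Z_x(t-\sigma_x)\Bigr) \,\Big|\, \mathcal{G}_t\right] = \prod_{x \in \Lambda_{\epsilon t}} \varphi_{t-\sigma_x}\!\bigl(\theta e^{-\gamma \sigma_x/2}\bigr).
\]
Using $\bE \tilde Z(s) = 0$ and $\bE \tilde Z(s)^2 = v(s)$, Taylor's theorem combined with Condition \ref{cond::momentCondCLT} gives
\[
\varphi_{t-\sigma_x}\!\bigl(\theta e^{-\gamma \sigma_x/2}\bigr) = 1 - \tfrac{1}{2}\theta^2 e^{-\gamma \sigma_x} v(t-\sigma_x) + E_x,
\]
with $|E_x| \le C|\theta|^{2+\kappa} e^{-\gamma(1+\kappa/2)\sigma_x}$ uniformly in $t$. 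Since $\sigma_x > \epsilon t$ on $\Lambda_{\epsilon t}$,
\[
\sum_{x \in \Lambda_{\epsilon t}} |E_x| \le C|\theta|^{2+\kappa} e^{-\gamma \kappa \epsilon t/2} M_{\epsilon t} \to 0 \quad \text{a.s.,}
\]
using $M_{\epsilon t} \to M_\infty$. Moreover, since $v$ is bounded and $v(t-\sigma_x) \to v(\infty)$ uniformly for $x \in \Lambda_{\epsilon t}$ (as $t-\sigma_x \ge (1-\epsilon)t \to \infty$), the principal second-order term can be replaced by $-\tfrac{1}{2}\theta^2 v(\infty) e^{-\gamma \sigma_x}$. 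Taking logarithms via $\log(1+z) = z + O(z^2)$ and using $\sum_{x \in \Lambda_{\epsilon t}} e^{-\gamma \sigma_x} = M_{\epsilon t}$, the conditional characteristic function converges in probability to $\exp(-\tfrac{1}{2}\theta^2 v(\infty) M_\infty)$. Taking expectations and invoking dominated convergence (the integrand is bounded by $1$ in modulus) completes the argument.

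The main obstacle I anticipate is controlling the cumulative Taylor error across $\Lambda_{\epsilon t}$, whose cardinality typically grows exponentially in $t$. Condition \ref{cond::momentCondCLT} is crucial here: the extra factor $e^{-\gamma \kappa \sigma_x/2}$ obtained from the $|\theta e^{-\gamma\sigma_x/2}|^{2+\kappa}$ bound is exactly what beats back the exponential proliferation of terms, and one must also verify that the boundedness of $v$ allows the uniform replacement $v(t-\sigma_x) \leadsto v(\infty)$ without contributing a spurious non-vanishing error. A secondary care is the separation of the "skeleton" remainder $R(t)$ (handled by Condition \ref{cond::integrabilityCondCLT} in probability only) from the "descendants' sum" (handled by conditional characteristic functions on $\mathcal{G}_t$); because progeny-dependence of $\bar\zeta$ prevents a direct $L^2$ decoupling at the root, one must be content with Slutsky-style arguments at the level of distributions.
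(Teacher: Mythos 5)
Your overall strategy mirrors the paper's: isolate the skeleton term via Condition \ref{cond::integrabilityCondCLT}, exploit conditional independence across $\Lambda_{\epsilon t}$, Taylor-expand around the variance, and invoke $M_{\epsilon t}\to M_\infty$ together with dominated convergence. The main technical difference is that you expand the conditional characteristic functions $\varphi_{t-\sigma_x}$ directly (using the classical bound $|e^{ix}-1-ix+\tfrac{x^2}{2}|\le |x|^{2+\kappa}$, valid for $\kappa\le1$, which with Condition \ref{cond::momentCondCLT} gives $|E_x|\le C|\theta|^{2+\kappa}e^{-\gamma(1+\kappa/2)\sigma_x}$) and then pass to logarithms; the paper instead truncates via the events $A_{\epsilon,t}^x$, uses Durrett's telescoping-product inequality \eqref{eq::technicalEstimateDurrett}, and Taylor-expands $e^z$ inside the expectations. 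Either mechanism is a legitimate way to tame the exponentially many factors; yours is arguably cleaner because the moment condition delivers the Taylor remainder for the characteristic function without any truncation.

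There is, however, one genuine gap. You assert that $v(t-\sigma_x)\to v(\infty)$ uniformly over $x\in\Lambda_{\epsilon t}$, justified by $t-\sigma_x\ge (1-\epsilon)t$. This inequality is simply false: for $x\in\Lambda_{\epsilon t}$ one has $\sigma_x>\epsilon t$ (with no upper bound on $\sigma_x$), so $t-\sigma_x < (1-\epsilon)t$ and indeed $t-\sigma_x$ can be arbitrarily negative. Since $v$ is assumed to converge only as $t\to+\infty$, the replacement $v(t-\sigma_x)\leadsto v(\infty)$ cannot be made term-by-term for the ``late births.'' The paper deals with this precisely: it splits $\Lambda_{\epsilon t}$ into $\Lambda_{\epsilon t}\setminus\Lambda_{t/2}$ (those with $\sigma_x\le t/2$, hence $t-\sigma_x\ge t/2\to\infty$, where the replacement is legitimate) and the late births $\Lambda_{\epsilon t}\cap\Lambda_{t/2}$, whose contribution
\[
\sum_{x\in\Lambda_{\epsilon t}\cap\Lambda_{t/2}} e^{-\gamma\sigma_x}
\]
is controlled using Lemma 3.5 of \cite{Nerman1981}, showing that its expectation converges to $\mu^{-1}\int_c^\infty (1-\nu_\gamma(s))\,ds$, which is small for large $c$. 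You need to insert an analogous split: for late-born $x$, bound $|v(t-\sigma_x)-v(\infty)|$ crudely by $2\|v\|_\infty$ and show the total discounted mass of such $x$ is asymptotically negligible. With that repair your argument goes through; without it, the claimed replacement is unjustified and the proof does not close.
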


In the proof, we will use that if $z_1, \dots, z_n$ and $w_1, \dots, w_n$ are complex numbers whose modulus is bounded by $C$, then
\begin{equation}\label{eq::technicalEstimateDurrett}
\left| \prod_{i = 1}^n z_i - \prod_{i=1}^n w_i \right| \leq C^{n-1} \sum_{i = 1}^n|z_i-w_i|.
\end{equation}
A proof of this may be found in
\cite[Lemma 3.4.3]{Durrett2010}.

\begin{proof}
	For $\epsilon \in (0,1/2)$, iterating from \eqref{eq::definitionTildeZ}, the definition of $\tilde Z$, we get
	\begin{equation}\label{eq::tildeZSplitExpression}
	\tilde Z (t) = \sum_{\sigma_x \leq \epsilon t} e^{-\gamma \sigma_x/2} \tilde \zeta_x(t- \sigma_x) + \sum_{x \in \Lambda_{\epsilon t}} e^{-\gamma \sigma_x/2} \tilde Z_x(t-\sigma_x).
	\end{equation}
	The first sum appearing in \eqref{eq::tildeZSplitExpression} can be rewritten as
	$$
	e^{-\gamma t/2} \sum_{\sigma_x \leq \epsilon t} \bar \zeta_x(t- \sigma_x)
	$$
which, by Condition \ref{cond::integrabilityCondCLT}, converges to 0 in probability	 as $t \to \infty$ if we choose $\epsilon$ appropriately small. For the rest of the proof, we fix such a choice of $\epsilon$.

We now consider the other sum appearing in \eqref{eq::tildeZSplitExpression}, and show that it converges in distribution to $\tilde Z_\infty$ as $t \to \infty$. The result will then follow from Slutsky's lemma. In other words, for $\theta \in \R$, we want to show that
	\begin{equation}\label{eq:charFunConvergence}
		\bE\left[e^{i \theta \sum_{x \in \Lambda_{\epsilon t}} e^{-\gamma \sigma_x/2}\tilde Z(t- \sigma_x)} - e^{-\frac 12 \theta^2 v(\infty) M_\infty}\right] \to 0,
	\end{equation}
	as $t \to \infty$. To do this, write, for an $x_0\in(0,1)$ that will be chosen below,
	$$
	A_{\epsilon, t}= \left\{\sup_{x \in \Lambda_{\epsilon t}} |\theta e^{-\gamma \sigma_x/2}\tilde Z_x(t- \sigma_x)| \leq x_0\right\},
	$$
	and split \eqref{eq:charFunConvergence} as
	\begin{align}
		\label{eq::splitFirstBit}
		\bE & \left[ e^{i \theta \sum_{x \in \Lambda_{\epsilon t}} e^{-\gamma \sigma_x/2} \tilde Z_x(t- \sigma_x) } - e^{- \frac 12 \theta^2 v(\infty) M_\infty}; A_{\epsilon, t}^c \right]\\
		\label{eq::splitSecondBit}
		 	& + \bE \left[ e^{i \theta \sum_{x \in \Lambda_{\epsilon t}} e^{-\gamma \sigma_x/2} \tilde Z_x(t- \sigma_x) } - e^{- \frac 12 \theta^2 v(\infty) \sum_{x \in \Lambda_{\epsilon t}} e^{-\gamma \sigma_x}}; A_{\epsilon, t}\right]\\
		\label{eq::splitThirdBit}
			& + \bE \left[e^{- \frac 12 \theta^2 v(\infty) \sum_{x \in \Lambda_{\epsilon t}} e^{-\gamma \sigma_x}} - e^{- \frac 12 \theta^2 v(\infty) M_\infty}; A_{\epsilon, t}\right].
	\end{align}
	We will show that each of these terms converge to $0$ as $t \to \infty$.
	
	Fix $\theta \in \R$ and $\delta \in (0,1)$. Let $x_0=x_0(\delta) \in (0,1)$ be such that
	\[
	\left| e^z - 1- z \right| \leq \delta |z| \quad \text{and} \quad \left| e^z- 1 - z - \frac{z^2}{2} \right| \leq \delta |z|^2,\]
	whenever $z \in \C$ satisfies $|z| \leq x_0$. And let $\tau=\tau(\delta,\theta) \in (0, \infty)$ be such that for $t \geq \tau$,
	\begin{gather}\nonumber
	x_0^{-(2+ \kappa)} |\theta|^{2+ \kappa} \sup_{u \in \R} \bE \{| \tilde Z(u)|^{2 + \kappa}\}  e^{-\gamma \epsilon \kappa t/2} \leq \delta,\\
	\theta^2 \|v\|_\infty e^{-\gamma \epsilon t} \leq x_0\nonumber
	\end{gather}
	and
	\[		|v(\infty) - v(t/2)| \leq \delta,\]
	where $\kappa$ is given by Condition \ref{cond::momentCondCLT}.
		
	Let us start by dealing with \eqref{eq::splitFirstBit}. For $t \geq \tau$,
	\begin{equation*}
	\begin{aligned}
		\bP (A_{\epsilon,t}^c) & = \bP \left(\sup_{x \in \Lambda_{\epsilon t}} |\theta e^{-\gamma \sigma_x/2}\tilde Z_x(t- \sigma_x)|^{2+\kappa} > x_0^{2+\kappa} \right) \\
			& \leq  \bP\left( \sum_{x \in \Lambda_{\epsilon t}} |\theta|^{2+\kappa} e^{-\gamma \sigma_x(1+\kappa/2)} | \tilde Z_x(t- \sigma_x)|^{2+\kappa} \geq x_0^{2 + \kappa}
		\right),
	\end{aligned}
	\end{equation*}
which, by Markov's inequality, is bounded by
\begin{equation}\label{eq::controlProbability}
	\begin{aligned}
	 x_0& ^{-(2+ \kappa)} |\theta|^{2+ \kappa} e^{-\gamma \epsilon \kappa t/2} \bE \left[ \sum_{x \in \Lambda_{\epsilon t}} e^{-\gamma \sigma_x} |\tilde Z_x(t- \sigma_x)|^{2+\kappa} \right]\\
			& \leq  x_0^{-(2+ \kappa)} |\theta|^{2+ \kappa}e^{-\gamma \epsilon \kappa t/2} \bE \left[ \sum_{x \in \Lambda_{\epsilon t}}e^{-\gamma \sigma_x} \bE\{| \tilde Z_x(t- \sigma_x)|^{2+\kappa}| \cF_{\epsilon t}\} \right]\\
			& \leq  x_0^{-(2+ \kappa)} |\theta|^{2+ \kappa} \sup_{u \in \R} \bE \{|\tilde Z(u)|^{2 + \kappa}\} e^{-\gamma \epsilon \kappa t/2}\\
			& \leq  \delta,
	\end{aligned}
	\end{equation}
	where we have used that $\sigma_x \in \cF_{\epsilon t}$, that $\tilde Z_x$ is independent of $\cF_{\epsilon t}$ and that $\bE M_{\epsilon t} = 1$. (This is where we need to control the moment of order $2+ \kappa$ for some $\kappa \in (0,\infty)$.) Therefore, \eqref{eq::splitFirstBit} is dominated by
	\begin{equation*}
	2 \bP(A_{\epsilon, t}^c) \leq 2\delta.
	\end{equation*}
	Since $\delta$ is arbitrary,
it follows that
	$$
	\bE \left[ e^{i \theta \sum_{x \in \Lambda_{\epsilon t}} e^{-\gamma \sigma_x/2} \tilde Z_x(t- \sigma_x) } - e^{- \frac 12 \theta^2 v(\infty) M_\infty}; A_{\epsilon, t}^c \right] \to 0,
	$$
	as $t \to \infty$, as required.
	
	To deal with \eqref{eq::splitThirdBit}, recall that
	$$
	\sum_{x \in \Lambda_{\epsilon t}} e^{-\gamma \sigma_x} = M_{\epsilon t} \to M_\infty \text{, a.s.},
	$$
	as $t \to \infty$. Therefore, dominated convergence implies that
	$$
	\bE \left[ e^{- \frac 12 \theta^2 v(\infty) \sum_{x \in \Lambda_{\epsilon t}}e^{-\gamma \sigma_x}}  - e^{- \frac 12 \theta^2 v(\infty) M_\infty}; A_{\epsilon, t}\right] \to 0,
	$$
	as $t \to \infty$, as required.
	
	We will spend the rest of the proof dealing with \eqref{eq::splitSecondBit}, which we rewrite as
	$$
	\bE\left[ \prod_{x \in \Lambda_{\epsilon t}}  \bE\left\{ \left.  e^{i \theta e^{-\gamma \sigma_x/2}\tilde Z_x(t- \sigma_x)} \bone_{A_{\epsilon, t}^x} \right| \cF_{\epsilon t} \right\} - \prod_{x \in \Lambda_{\epsilon t}}\bE\left\{ \left.  e^{- \frac 12 \theta^2v(\infty) e^{-\gamma \sigma_x}} \bone_{A_{\epsilon, t}^x}  \right| \cF_{\epsilon t} \right\} \right]
	$$
	using the conditional independence built into the branching structure, where
	$$
	A^x_{\epsilon,t} = \{|\theta e^{-\gamma \sigma_x/2} \tilde Z_x(t- \sigma_x)| \leq x_0\}.
	$$

	By \eqref{eq::technicalEstimateDurrett}, the term inside the expectation satisfies
\begin{equation}\label{eq::trickDurrett}
	\begin{aligned}
	 & \left|\prod_{x \in \Lambda_{\epsilon t}}  \bE\left\{ \left.  e^{i \theta e^{-\gamma \sigma_x/2}\tilde Z_x(t- \sigma_x)} \bone_{A_{\epsilon, t}^x} \right| \cF_{\epsilon t} \right\} - \prod_{x \in \Lambda_{\epsilon t}}\bE\left\{ \left.  e^{-\frac 12 \theta^2 v(\infty) e^{-\gamma \sigma_x}} \bone_{A_{\epsilon, t}^x}  \right| \cF_{\epsilon t} \right\}\right| \\
		& \quad \leq \sum_{x \in \Lambda_{\epsilon t}}\left|  \bE\left\{ \left.  e^{i \theta e^{-\gamma \sigma_x/2}\tilde Z_x(t- \sigma_x)} \bone_{A_{\epsilon, t}^x} \right| \cF_{\epsilon t} \right\} - \bE\left\{ \left.  e^{- \frac 12 \theta^2 v(\infty) e^{-\gamma \sigma_x}} \bone_{A_{\epsilon, t}^x}  \right| \cF_{\epsilon t} \right\} \right|.
	\end{aligned}
\end{equation}
	A Taylor expansion and the second order exponential estimate yield that
	\[
	\begin{aligned}
		& \left|  \bE\left\{ \left.  e^{i \theta e^{-\gamma \sigma_x/2}\tilde Z_x(t- \sigma_x)} \bone_{A_{\epsilon, t}^x} \right| \cF_{\epsilon t} \right\} \right.\\
		 	& \quad \quad - \left. \bE\left\{ \left. \left( 1 + i \theta e^{-\gamma \sigma_x/2} \tilde Z_x(t- \sigma_x) - \frac 12 \theta^2 e^{-\gamma \sigma_x} \tilde Z_x(t- \sigma_x)^2 \right) \bone_{A_{\epsilon,t}^x } \right| \cF_{\epsilon t} \right\} \right| \\
		& \quad \leq \delta \theta^2 \|v\|_\infty e^{-\gamma \sigma_x}.
	\end{aligned}
	\]
	Furthermore,
	\[
		\begin{aligned}
			&\left| \bE\left\{ \left. \left( 1 + i \theta e^{-\gamma \sigma_x/2} \tilde Z_x(t- \sigma_x) - \frac 12 \theta^2 e^{-\gamma \sigma_x} \tilde Z_x(t- \sigma_x)^2 \right) \bone_{A_{\epsilon,t}^x } \right| \cF_{\epsilon t} \right\} \right.\\
			& \quad \quad  - \left. \bE\left\{ \left.  e^{- \frac 12 \theta^2 v(\infty) e^{-\gamma \sigma_x}} \bone_{A_{\epsilon, t}^x}  \right| \cF_{\epsilon t} \right\} \right|\\
			& \quad \leq \left| \bE\left\{ \left. \left(i \theta e^{-\gamma \sigma_x/2} \tilde Z_x(t- \sigma_x) \right) \bone_{A_{\epsilon,t}^x } \right| \cF_{\epsilon t} \right\} \right|\\
			& \quad \quad + \left| \bE\left\{ \left. \left( 1 - \frac 12 \theta^2 e^{-\gamma \sigma_x} \tilde Z_x(t- \sigma_x)^2 - e^{- \frac 12 \theta^2 v(\infty) e^{-\gamma \sigma_x}}
\right) \bone_{A_{\epsilon,t}^x } \right| \cF_{\epsilon t} \right\}\ \right|.
		\end{aligned}
	\]
	Now, since
	$$
	\bE \left[ \left. \tilde Z_x(t-\sigma_x) \right| \cF_{\epsilon t} \right] = 0,
	$$
	reasoning as in \eqref{eq::controlProbability} shows that
	\begin{equation*}
	\begin{aligned}
		&\left| \bE\left\{ \left. \left(i \theta e^{-\gamma \sigma_x/2} \tilde Z_x(t- \sigma_x) \right) \bone_{A_{\epsilon,t}^x } \right| \cF_{\epsilon t} \right\} \right|\\
		& \quad = \left| \bE\left\{ \left. \left(i \theta e^{-\gamma \sigma_x/2} \tilde Z_x(t- \sigma_x) \right) \bone_{\Omega \setminus A_{\epsilon,t}^x } \right| \cF_{\epsilon t} \right\} \right| \\
		& \quad \leq x_0 \bE\left\{ \left.  x_0^{-1} |\theta| e^{-\gamma \sigma_x/2} |\tilde Z_x(t- \sigma_x)| \bone_{\Omega \setminus A_{\epsilon,t}^x } \right| \cF_{\epsilon t} \right\}\\
		& \quad \leq x_0 \bE\left\{ \left. x_0^{-(2+ \kappa)} |\theta|^{2+\kappa} e^{-\gamma \sigma_x(1+ \kappa/2)} |\tilde Z_x(t- \sigma_x)|^{2+\kappa} \bone_{\Omega \setminus A_{\epsilon,t}^x } \right| \cF_{\epsilon t} \right\}\\
		& \quad \leq \delta e^{-\gamma \sigma_x},
	\end{aligned}
	\end{equation*}
	for $t \geq \tau$. And, since for $x \in \Lambda_{\epsilon t}$ we have
	$$
	|v(\infty) \theta^2 e^{-\gamma \sigma_x}/2| \leq \theta^2 \|v\|_\infty e^{-\gamma \epsilon t} \leq x_0,
	$$
	a Taylor expansion and the first moment exponential estimate yield that
	\[
		\begin{aligned}
			&\left| \bE\left\{ \left. \left( 1 - \frac 12 \theta^2 e^{-\gamma \sigma_x} \tilde Z_x(t- \sigma_x)^2 - e^{-\frac 12 \theta^2 v(\infty) e^{-\gamma \sigma_x}}\right) \bone_{A_{\epsilon,t}^x } \right| \cF_{\epsilon t} \right\}\ \right|\\
			& \quad \leq \left| \bE\left\{ \left.  - \frac 12 \theta^2 e^{-\gamma \sigma_x} \tilde Z_x(t- \sigma_x)^2 + \frac 12 \theta^2 v(\infty) e^{-\gamma \sigma_x} \right| \cF_{\epsilon t} \right\} \right| + \frac 12 \delta \theta^2 |v (\infty)| e^{-\gamma \sigma_x} \\
			& \quad \leq \theta^2 \{ |v(t- \sigma_x) - v(\infty) | + \delta \|v\|_\infty\} e^{-\gamma \sigma_x}.
		\end{aligned}
	\]
	Notice that, for $t\geq \tau$,
	\begin{equation}\label{eq:splittingRenewalTerm}
	\begin{aligned}
		\sum_{x \in \Lambda_{\epsilon t}} & \theta^2 \{ |v(t- \sigma_x) - v(\infty)| +\delta \|v\|_\infty\} e^{-\gamma \sigma_x}\\
		& \leq \theta^2 \sum_{x \in \Lambda_{\epsilon t} \setminus \Lambda_{t/2}} \delta ( 1 + \|v\|_\infty) e^{-\gamma \sigma_x} + \theta^2 \sum_{x \in \Lambda_{\epsilon t} \cap \Lambda_{t/2}} ( 2 + \delta) \|v\|_\infty e^{-\gamma \sigma_x}\\
		& \leq \delta \theta^2(1 + \|v\|_\infty) M_{\epsilon t} + \theta^2 (2 + \delta) \|v\|_\infty \sum_{x \in \Lambda_{\epsilon t} \cap \Lambda_{t/2}} e^{-\gamma \sigma_x}.
	\end{aligned}
\end{equation}
	
	Together, \eqref{eq::trickDurrett} to \eqref{eq:splittingRenewalTerm} show that \eqref{eq::splitSecondBit} is dominated by
	\begin{equation}\label{eq::cleanBoundSecondBit}
	\bE\left[\delta [ 1 + \theta^2( 1 + 2 \|v\|_\infty)] M_{\epsilon t}\right] + \bE\left[\theta^2 (2 + \delta) \|v\|_\infty \sum_{x \in \Lambda_{\epsilon t} \cap \Lambda_{t/2}} e^{-\gamma \sigma_x}\right].	
\end{equation}
	Fix $c \in(0,\infty)$ large. For $t$ large enough, by Lemma 3.5 of \cite{Nerman1981},
	$$
	\bE\left[ \sum_{x \in \Lambda_{\epsilon t} \cap \Lambda_{t/2}} e^{-\gamma \sigma_x} \right] \leq \bE\left[ \sum_{x \in \Lambda_{\epsilon t} \cap \Lambda_{\epsilon t + c}} e^{-\gamma \sigma_x} \right] \to \mu^{-1}{\int_c^\infty(1- \nu_\gamma(s))ds},
	$$
	as $t \to \infty$. The limiting expression can be made arbitrarily small by choosing $c$ large enough. Therefore, the second term in \eqref{eq::cleanBoundSecondBit} converges to 0.
 Since $\bE M_{\epsilon t} = 1$, the first term can also be made arbitrarily small by adjusting $\delta$. The result follows.
\end{proof}

\subsection{A word on the lattice case}\label{lattice}

The statement of Theorem \ref{thm::CLT} requires us to show that $v(t) \to v(\infty)$. This is typically done using renewal theory. Recall that if $\nu_\gamma$ is lattice, supported on
$b \Z$, say,
then this convergence does not occur. Instead we typically get that, for every $y$,
$$
v(y + bn) \to g(y),
$$
as $n \to \infty$, where $g(y)$ is a $b$-periodic function. In this case, the proof presented above shows that, for every $y$,
$$
\tilde Z (y + bn) \to \tilde Z_\infty(y), \text{ in distribution},
$$
as $n \to \infty$, where the distribution of $\tilde Z_\infty(y)$ is characterised by
$$
\bE \left[ e^{i \theta Z_\infty(y)} \right] = \bE \left[ e^{-\frac 12 \theta^2 g(y) M_\infty}\right].
$$

\section{The central limit theorem for $\Delta_n$-general branching processes}
\label{sec::dirichletWeights}

In this section, we discuss applications of the central limit theorem to the general branching process under the assumption that the number of offspring $\xi(\infty)$ is constant and equal to $n$, say, and that the birth times $\sigma_1,\dots,\sigma_n$ are distributed such that, for some fixed constant $\gamma\in (0,\infty)$, we have $(e^{-\gamma \sigma_1}, \dots, e^{-\gamma \sigma_{n}})$ is a distribution on the simplex $\Delta_n$ in $\mathbb{R}^n$, i.e.
$$
\sum_{i = 1}^n e^{-\gamma \sigma_i} = 1.
$$
The latter assumption ensures that the fundamental martingale $M \equiv 1$. We call a branching process with offspring distribution defined on the simplex in this way a $\Delta_n$-general branching process. For simplicity, we also suppose throughout this section that $\nu_\gamma$ is non-lattice, though as discussed in Section \ref{lattice}, we expect subsequential versions of the results below to hold if this is not the case. A basic example that we will return to is the case where the distribution of $(e^{-\gamma \sigma_1}, \dots, e^{-\gamma \sigma_{n}})$ is Dirichlet$(\alpha_1,\dots,\alpha_n)$.

In this case our general branching process $(\xi_x, L_x, \phi_x)_x$, with Malthusian parameter $\gamma$, will satisfy
a weak law of large numbers, i.e.\
$$
e^{-\gamma t} Z^\phi(t) \to z^\phi(\infty), \text{ in probability},
$$
as $t \to \infty$. We wish to describe the random fluctuations around the limit. To do this, we study the expression
\begin{equation}\label{eq::splitApplicationCLTDirichlet}
e^{\gamma t/2}  \left[ e^{-\gamma t} Z^\phi(t) - z^\phi(\infty) \right] =
e^{-\gamma t/2}\left[Z^\phi(t) - e^{\gamma t} z^\phi(t)\right] + e^{\gamma t/2} [z^\phi(t) - z^\phi(\infty)].
\end{equation}

Our aim is to apply Theorem \ref{thm::CLT} to the first part of this expression. We will see that conditions making
this possible ensure that the second term converges to 0. This will then produce a result on the fluctuations of $Z^\phi(t)$ thanks to Slutsky's lemma.

An outline of this section is that we begin by centring our characteristic in order to apply the central limit theorem
of Section~2. We then show that in order to control the centred characteristic we need to control the rate of
convergence in the associated renewal theorem.

\subsection{Centring the process}

To start, notice that, in the notation of Section \ref{bpcltsec},
$$
Z^\phi(t) - e^{\gamma t} z^\phi(t) = \bar Z(t) = Z^{\bar \zeta}(t)
$$
is a centred characteristic counting process, if we define $\bar \zeta$ by
\begin{equation}\label{eq::defCentring}
\begin{aligned}
\bar \zeta_\emptyset(t) &= \phi_\emptyset(t) + \sum_{i = 1}^n e^{\gamma( t- \sigma_i)} z^\phi(t- \sigma_i) - e^{\gamma t}z^\phi(t)\\
&= \phi_\emptyset (t) + \sum_{i = 1}^n e^{\gamma( t- \sigma_i)} [z^\phi(t- \sigma_i) - z^\phi(t) ],
\end{aligned}
\end{equation}
where we have used that $e^{-\gamma \sigma_1} + \cdots + e^{-\gamma \sigma_n} = 1$. This representation can be
used to get the following control on $\bar \zeta$ under an assumption on $|z^\phi(t- \sigma_i) - z^\phi(t)|$.

\begin{lemma}\label{lem::convergenceRate}
Let $(\xi_x, L_x, \phi_x)_x$ be a $\Delta_n$-general branching process. Assume that
$$
|z^\phi(t) - z^\phi(\infty)| \leq c_1 e^{- \beta_1 t} \wedge c_2,
$$
for some positive constants $c_1,c_2$ and $\beta_1 \in [0, \gamma]$. Then,
$$
|\bar \zeta(t)| \leq |\phi(t)| + 2n \left( c_1 e^{(\gamma - \beta_1) t} \wedge c_2 e^{\gamma t} \right).
$$
\end{lemma}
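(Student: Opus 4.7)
The lemma is essentially a direct algebraic consequence of the representation \eqref{eq::defCentring} together with the constraint $\sum_{i=1}^n e^{-\gamma\sigma_i}=1$ built into the $\Delta_n$-GBP setting, so the plan is short and there is no serious technical obstacle — it just requires careful bookkeeping with the $\wedge$ notation.

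The plan is to start from
\[
\bar\zeta_\emptyset(t) = \phi_\emptyset(t) + \sum_{i=1}^{n} e^{\gamma(t-\sigma_i)}\bigl[z^\phi(t-\sigma_i) - z^\phi(t)\bigr],
\]
apply the triangle inequality, and reduce the problem to bounding each increment $|z^\phi(t-\sigma_i) - z^\phi(t)|$. The second triangle inequality, passing through $z^\phi(\infty)$, gives
\[
|z^\phi(t-\sigma_i) - z^\phi(t)| \le \bigl(c_1 e^{-\beta_1(t-\sigma_i)}\wedge c_2\bigr) + \bigl(c_1 e^{-\beta_1 t}\wedge c_2\bigr).
\]
Since $\sigma_i>0$ forces $c_1 e^{-\beta_1 t}\le c_1 e^{-\beta_1(t-\sigma_i)}$, and $x\mapsto x\wedge c_2$ is monotone, both terms on the right are dominated by $c_1 e^{-\beta_1(t-\sigma_i)}\wedge c_2$, yielding a factor of $2$.

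Multiplying by $e^{\gamma(t-\sigma_i)}$ and distributing the minimum gives each summand bounded by
\[
2\bigl(c_1 e^{(\gamma-\beta_1)(t-\sigma_i)} \wedge c_2 e^{\gamma(t-\sigma_i)}\bigr).
\]
Here I use the hypothesis $\beta_1\in[0,\gamma]$ so that $\gamma-\beta_1\ge 0$; combined with $\sigma_i>0$, this means $e^{(\gamma-\beta_1)(t-\sigma_i)}\le e^{(\gamma-\beta_1)t}$ and $e^{\gamma(t-\sigma_i)}\le e^{\gamma t}$, and monotonicity of the minimum then gives
\[
c_1 e^{(\gamma-\beta_1)(t-\sigma_i)} \wedge c_2 e^{\gamma(t-\sigma_i)} \le c_1 e^{(\gamma-\beta_1)t}\wedge c_2 e^{\gamma t}.
\]
Summing over the $n$ offspring produces the factor $2n$, and combining with the trivial bound $|\phi_\emptyset(t)|\le |\phi(t)|$ yields the stated inequality.

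The only delicate point I would double-check is that the constraint $\sigma_i>0$ is being used correctly together with the $\wedge$ operation (the monotonicity of $x\mapsto x\wedge c_2$ is what collapses the two triangle-inequality terms into a single one times $2$). There is no further obstacle: the proof is three lines of estimates once the representation \eqref{eq::defCentring} is in hand, and does not require any renewal theory or probabilistic input beyond the hypothesis itself.
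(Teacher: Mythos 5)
Your proposal is correct and follows essentially the same route as the paper's proof: both pass through $z^\phi(\infty)$ by the triangle inequality, apply the hypothesis, and use $\sigma_i>0$ together with $\gamma-\beta_1\geq 0$ to collapse the resulting expressions to $2n\left(c_1 e^{(\gamma-\beta_1)t}\wedge c_2 e^{\gamma t}\right)$. The only cosmetic difference is that you bound each summand uniformly before summing, whereas the paper first interchanges the sum with the minimum via $\sum_i(a_i\wedge b_i)\leq\left(\sum_i a_i\right)\wedge\left(\sum_i b_i\right)$ and then bounds the aggregate sums; the ingredients are identical.
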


\begin{proof}
Notice that
\begin{align*}
|\bar \zeta_\emptyset(t)| &\leq |\phi_\emptyset(t)| + \sum_{i = 1}^n e^{\gamma(t- \sigma_i)} \{|z^\phi(t- \sigma_i) - z^\phi(\infty)| + |z^\phi(t) - z^\phi(\infty)|\}\\
& \leq |\phi_\emptyset(t)| + \sum_{i = 1}^n \left(c_1 e^{(\gamma - \beta_1)(t- \sigma_i)} \wedge c_2 e^{\gamma (t- \sigma_i)}\right) + n \left( c_1 e^{(\gamma- \beta_1)t} \wedge c_2  e^{\gamma t} \right)\\
& \leq |\phi_\emptyset(t)| + \left(c_1 \sum_{i = 1}^n e^{(\gamma - \beta_1)(t- \sigma_i)} \wedge c_2 \sum_{i = 1}^n e^{\gamma(t - \sigma_i)}\right) + n \left( c_1 e^{(\gamma- \beta_1)t} \wedge c_2 e^{\gamma t} \right).
\end{align*}
The result follows upon noticing that
$$
\sum_{i = 1}^n e^{(\gamma - \beta_1)(t- \sigma_i)} = e^{(\gamma - \beta_1) t} \sum_{i = 1}^n e^{-(\gamma - \beta_1) \sigma_i} \leq n e^{(\gamma - \beta_1) t},
$$
and proceeding similarly for the other sum.
\end{proof}

This lemma and the decomposition in \eqref{eq::splitApplicationCLTDirichlet} show that understanding the rate of convergence of $z^\phi(t)$ to its limit in the renewal theorem is helpful for estimating the fluctuations of $Z^\phi$.

\subsection{Convergence rate in the renewal theorem}

Consider the renewal equation
\begin{equation}\label{eq::genericRenewalEquation}
z(t) = u(t) + \int_0^\infty z(t-s) F(ds),
\end{equation}
where $F$ is a non-lattice distribution function on $[0, \infty)$. The key to solving the renewal equation is the renewal measure
$$
H = \sum_{n = 0}^\infty F^{*n},
$$
where $F^{*n}$ denotes the $n$-fold convolution of $F$ with itself, because $z$ is typically given by
\begin{equation*}
z(t) = \int_0^\infty u(t-y) H(dy);
\end{equation*}
see \cite{Feller1968, Karlin1955} among others. The renewal theorem of \cite{Feller1968} states that if $F$ has a finite mean $\mu_1$, then
\begin{equation}\label{renewal}
H(t) \sim \mu_1^{-1} t,
\end{equation}
where $f(x) \sim g(x)$ means that $f(x)/g(x) \to 1$ as $x \to \infty$. Under some regularity conditions on $u$, e.g.\ if $u$ is directly Riemann integrable, this can be used to show that
$$
z(t) \to z(\infty) = \frac{1}{\mu_1} \int_{-\infty}^\infty u(s) ds,
$$
as $t \to \infty$; see \cite{Feller1968}.

The error in the linear approximation of the renewal function
\begin{equation}
G(t) = H(t) - \mu_1^{-1} t \label{eq:defnG}
\end{equation}
has been studied extensively. When $F$ has a finite second moment $\mu_2$,
\begin{equation}\label{eq::secondMomentConvergence}
G(t) \to \frac{\mu_2}{2 \mu_1^2},
\end{equation}
as $t \to \infty$.

The rate of convergence of $G(t)$ to its limit in this case can be studied using the Fourier transform $f$ of $F$ defined
by
$$
f(w) = \int_{-\infty}^\infty e^{ws} F(ds), \quad w \in \C.
$$
We refer the reader to \cite{Feller1968, Leadbetter1964, Stone1965a, Stone1965} and references therein for proofs; see also Appendix B of \cite{Kigami2001} for a discussion of the lattice case. In particular, Stone proved the following theorem in \cite{Stone1965a}.

\begin{theorem}[Stone]\label{thm::stoneExpConvergenceRate}
	Suppose that there exists $r_1 \in (0, \infty)$ such that $f(w)$ is analytic and $\neq 1$ when $\Re w \in (0, r_1)$. Then, for every $r \in(0, r_1)$,
	$$
	G(t) - \frac{\mu_2}{2 \mu_1^2} = O(e^{-rt}),
	$$
	as $t \to \infty$.
\end{theorem}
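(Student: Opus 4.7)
The plan is to establish the exponential convergence rate using Laplace (Mellin--Bromwich) transform methods, exploiting the analyticity of $f(w)$ in the strip $\Re w \in (0, r_1)$ to shift the inversion contour into a region where $e^{st}$ provides exponential decay. To match conventions, set $\hat F(s) := f(-s) = \int_0^\infty e^{-st} F(dt)$, which is analytic on $\{\Re s > -r_1\}$ and, by hypothesis, satisfies $\hat F(s) \neq 1$ throughout this strip except at $s = 0$.

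First I would write the Laplace transform of the renewal function $H(t)$ for $\Re s > 0$ as
\[
\int_0^\infty e^{-st} H(t)\,dt \;=\; \frac{1}{s(1-\hat F(s))},
\]
and subtract off the Laplace transforms of the two explicit terms in the linear approximation. Setting $\tilde G(t) := G(t) - \mu_2/(2\mu_1^2)$, this gives the candidate transform
\[
\hat{\tilde G}(s) \;=\; \frac{1}{s(1-\hat F(s))} - \frac{1}{\mu_1 s^2} - \frac{\mu_2}{2\mu_1^2 s},
\]
valid for $\Re s > 0$. The expansion $\hat F(s) = 1 - \mu_1 s + \tfrac{\mu_2}{2}s^2 + o(s^2)$ near $s=0$ (which uses only the hypothesised finiteness of $\mu_2$) shows that $\hat{\tilde G}$ has a removable singularity at $0$. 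Combined with the analyticity and nonvanishing of $1 - \hat F$ on the strip, this extends $\hat{\tilde G}$ to a holomorphic function on all of $\{\Re s > -r_1\}$.

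Next I would recover $\tilde G$ via the Bromwich inversion formula
\[
\tilde G(t) \;=\; \frac{1}{2\pi i} \int_{c - i\infty}^{c + i\infty} e^{st} \hat{\tilde G}(s)\, ds,
\]
valid for $c > 0$ (justified, if necessary, by first smoothing $\tilde G$ by convolution with a compactly supported mollifier and removing the smoothing at the end; boundedness of $\tilde G$, which follows from \eqref{eq::secondMomentConvergence}, makes this standard). Given any $r \in (0, r_1)$, Cauchy's theorem on the rectangle with vertical sides $\Re s = c$ and $\Re s = -r$ -- combined with the holomorphy of $\hat{\tilde G}$ inside -- allows the contour to be shifted to $\Re s = -r$, producing
\[
|\tilde G(t)| \;\leq\; \frac{e^{-rt}}{2\pi} \int_{-\infty}^\infty \bigl|\hat{\tilde G}(-r + iy)\bigr|\,dy,
\]
which gives the desired bound $G(t) - \mu_2/(2\mu_1^2) = O(e^{-rt})$ provided the integral on the right converges.

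The main obstacle is exactly that integrability and the attendant justification of the contour shift at $\pm i\infty$. The subtraction of the singular parts $\mu_1^{-1} s^{-2} + \tfrac{\mu_2}{2\mu_1^2} s^{-1}$ is designed precisely so that, wherever $|1-\hat F(-r+iy)|$ stays bounded below, the remainder decays like $|y|^{-2}$ as $|y| \to \infty$; the delicate point is ruling out $1 - \hat F(-r+iy) \to 0$ along some subsequence. By hypothesis $\hat F \neq 1$ throughout the strip, and continuity of $\hat F$ together with the growth control on $f$ (one may invoke a Phragm\'en--Lindel\"of argument on the strip $0 \leq \Re w \leq r$ to show $\hat F(-r+iy)$ is uniformly bounded away from $1$ for $|y|$ large, using that $\hat F(iy)$ is the characteristic function of a non-lattice distribution and hence bounded away from $1$ uniformly outside any neighbourhood of $0$) yields the required bound. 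Once this bound is in place the contour deformation and dominated convergence complete the proof; the argument may be smoothed by convolving with a Fej\'er-type kernel and passing to the limit if one wishes to avoid technicalities about the existence of the inversion integral in the classical sense.
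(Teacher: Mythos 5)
The paper does not prove this result itself; it is a direct citation of Stone's 1965 paper, so I will assess your argument on its own merits. Your plan has the right overall shape (Laplace/Bromwich inversion plus contour shift into the strip of analyticity), but there are two genuine gaps that prevent it from closing.

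The central difficulty is the integrability along the shifted contour, and your estimate here is incorrect. Along $\Re s = -r$, the transform
\[
\hat{\tilde G}(s) \;=\; \frac{1}{s(1-\hat F(s))} - \frac{1}{\mu_1 s^2} - \frac{\mu_2}{2\mu_1^2 s}
\]
decays only like $|y|^{-1}$, not $|y|^{-2}$: as $|y|\to\infty$ one typically has $\hat F(-r+iy)\to 0$, so $\frac{1}{s(1-\hat F(s))}\to\frac 1s$, and after subtracting $\frac{\mu_2}{2\mu_1^2 s}$ the leading behaviour is $\frac{1}{s}\bigl(1-\frac{\mu_2}{2\mu_1^2}\bigr)$, which (unless $\mu_2=2\mu_1^2$) is $O(|y|^{-1})$. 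This reflects the jump of $G\mathbf{1}_{[0,\infty)}$ at $t=0$ and the fact that $H$ has atoms, and it means $\int_{-\infty}^\infty|\hat{\tilde G}(-r+iy)|\,dy$ diverges. Consequently the contour shift plus triangle inequality does not give $O(e^{-rt})$ as written. The smoothing step you relegate to a parenthetical remark (convolve with a kernel whose transform has compact support, shift contours for the smoothed quantity, then remove the smoothing using monotonicity of $H$ and Blackwell's theorem to control increments of $G$) is not a technicality: in Stone's proof it is the essential mechanism, and without spelling it out the argument has a hole exactly at the step you flag as ``the main obstacle.''

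The Phragm\'en--Lindel\"of step also needs repair. You assert that $\hat F(iy)$ (the characteristic function) is bounded away from $1$ uniformly for $|y|$ bounded away from $0$. For a non-lattice distribution this holds on compacts, but not uniformly as $|y|\to\infty$; non-lattice laws can have $\limsup_{|y|\to\infty}|\hat F(iy)|=1$ (failure of Cram\'er's condition). The required lower bound on $|1-\hat F(-r+iy)|$ must therefore be obtained from the hypothesis $f\neq1$ in the open strip together with a decay estimate for $\hat F$ along $\Re s=-r$ (e.g.\ Riemann--Lebesgue when $e^{rt}F(dt)$ has an absolutely continuous part, or a spread-out assumption), not from a Phragm\'en--Lindel\"of extrapolation from the imaginary axis. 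As stated, this part of the argument is either circular or invokes a false uniform bound on the boundary line $\Re s = 0$.
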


Since we aim to apply Lemma \ref{lem::convergenceRate}, we will be particularly interested in exponential rates of convergence of $z(t)$ to its limit. This rate of convergence is connected to that in Theorem \ref{thm::stoneExpConvergenceRate} by the following lemma adapted from \cite{CH2010} which we include for convenience.

\begin{lemma}\label{lem::convergenceGtoZ}
Let $z$, $u$ and $F$ satisfy the renewal equation \eqref{eq::genericRenewalEquation}. Suppose that
$$
z(t) = \int_0^\infty u(t - y)H(dy) \to \mu_1^{-1} \int_{-\infty}^\infty u(y) dy,
$$
as $t \to \infty$. Then,
$$
z(\infty) - z(t) = \mu_1^{-1} \int_0^\infty u(t+y) dy - \int_0^\infty u(t-y) G(dy).
$$
\end{lemma}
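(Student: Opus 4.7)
The plan is to derive the identity by a direct manipulation of the two integrals that define $z(\infty)$ and $z(t)$, exploiting the elementary relationship $G(dy) = H(dy) - \mu_1^{-1}\,dy$ as signed measures on $[0,\infty)$, which comes straight from the definition $G(t)=H(t)-\mu_1^{-1}t$ at \eqref{eq:defnG}.

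First I would write $z(\infty)=\mu_1^{-1}\int_{-\infty}^\infty u(y)\,dy$ and split this integral at $y=t$:
\[
z(\infty)=\mu_1^{-1}\int_{-\infty}^t u(y)\,dy+\mu_1^{-1}\int_t^\infty u(y)\,dy.
\]
In the right piece I would substitute $y=t+s$ to obtain $\mu_1^{-1}\int_0^\infty u(t+y)\,dy$, which is exactly the first term in the claimed expression for $z(\infty)-z(t)$. In the left piece I would substitute $y=t-s$ to obtain $\mu_1^{-1}\int_0^\infty u(t-y)\,dy$.

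Subtracting $z(t)=\int_0^\infty u(t-y)\,H(dy)$ from the transformed left piece then gives
\[
\mu_1^{-1}\int_0^\infty u(t-y)\,dy-\int_0^\infty u(t-y)\,H(dy)=-\int_0^\infty u(t-y)\,[H(dy)-\mu_1^{-1}\,dy],
\]
and the bracket is precisely $G(dy)$, so the second integral in the claimed expression appears with the correct sign. Combining the two pieces yields the identity.

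There is essentially no serious obstacle here: this is a change-of-variables/Fubini exercise rather than an analytic inequality. The only thing to watch is that all integrals be well-defined, but the standing hypothesis that $z(t)=\int_0^\infty u(t-y)\,H(dy)$ converges to $\mu_1^{-1}\int_{-\infty}^\infty u(y)\,dy$ supplies exactly the integrability of $u$ against both $H$ and Lebesgue measure needed to justify the splittings and substitutions above.
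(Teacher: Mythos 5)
Your proof is correct and coincides with the paper's own argument: the paper writes $z(\infty) - z(t) = \mu_1^{-1}\int_0^\infty u(t+y)\,dy + \mu_1^{-1}\int_0^\infty u(t-y)\,dy - \int_0^\infty u(t-y)\,H(dy)$ (i.e.\ your split of $\int_{-\infty}^\infty u$ at $y=t$ together with the substitutions) and then invokes $G(dy)=H(dy)-\mu_1^{-1}\,dy$, exactly as you do.
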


\begin{proof}
	It suffices to notice that
	\begin{align*}
	z(\infty) - z(t) = \mu_1^{-1}\int_{0}^\infty u(t+y) dy + \mu_1^{-1} \int_0^\infty u(t-y) dy - \int_0^\infty u(t-y) H(dy),	
	\end{align*}
	and use the definition of $G$.
\end{proof}

\subsection{Checking the conditions of the central limit theorem}

Discussing the conditions of the central limit theorem involves finding growth estimates for $\bar \zeta$ which can be
verified using Lemma \ref{lem::convergenceRate}. To be compatible with the framework of Nerman's law of large
numbers, we will focus on the situation where the characteristic vanishes for negative times; extensions will be discussed
where needed. We start by looking at Condition \ref{cond::integrabilityCondCLT}.

\begin{lemma}\label{lem::checkIntegrabilityCondition}
Let $(\xi_x, L_x, \phi_x)_x$ be a $\Delta_n$-general branching process and let $\bar \zeta$ be defined
as in \eqref{eq::defCentring}. Assume that $\phi(t) = 0$ for $t < 0$ and that there exists a constant $c_1>0$ such that
$$
|\bar \zeta(t)| \leq c_1 e^{\beta_1 t},
$$
for some $\beta_1 \in (0, \gamma/2)$. Then, Condition \ref{cond::integrabilityCondCLT} is satisfied.
\end{lemma}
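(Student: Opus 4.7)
The plan is to apply Markov's inequality to the absolute value of the sum and then use the hypothesis $|\bar\zeta(t)|\le c_1 e^{\beta_1 t}$ together with a standard bound on the expected population size. First, I observe that $\phi(t)=0$ for $t<0$ forces $Z^\phi(t)=0$ (since every $\sigma_x\ge 0$), hence $z^\phi(t)=0$ for $t<0$, and so the definition \eqref{eq::defCentring} gives $\bar\zeta(t)=0$ for $t<0$; this ensures that every term in the sum of interest is well-defined.

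Second, by the triangle inequality and the pointwise bound on $\bar\zeta$,
\[
\bE\left[e^{-\gamma t/2}\left|\sum_{\sigma_x\le\epsilon t}\bar\zeta_x(t-\sigma_x)\right|\right]\le c_1\, e^{(\beta_1-\gamma/2)t}\,\bE\left[\sum_{\sigma_x\le\epsilon t}e^{-\beta_1\sigma_x}\right].
\]
Since each $e^{-\beta_1\sigma_x}\le 1$, the remaining expectation is dominated by $\bE[A(\epsilon t)]$, where $A(s):=\#\{x\in\Sigma:\sigma_x\le s\}$ is the characteristic counting process associated with the characteristic $\mathbf{1}_{[0,\infty)}$. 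Its rescaled mean $a(s):=e^{-\gamma s}\bE[A(s)]$ satisfies the renewal equation $a=u+a*\nu_\gamma$ with directly Riemann integrable forcing $u(s)=e^{-\gamma s}\mathbf{1}_{s\ge 0}$, so $a(s)\to(\gamma\mu_1)^{-1}$ as $s\to\infty$; together with the fact that $a$ is locally bounded (being the convolution of a bounded function with the locally finite renewal measure $H$), this yields $\bE[A(s)]\le C e^{\gamma s}$ uniformly in $s\ge 0$ for some constant $C$.

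Combining these estimates with Markov's inequality,
\[
\bp\!\left(e^{-\gamma t/2}\left|\sum_{\sigma_x\le\epsilon t}\bar\zeta_x(t-\sigma_x)\right|>\delta\right)\le\frac{c_1 C}{\delta}\exp\bigl((\beta_1-\gamma/2+\gamma\epsilon)t\bigr),
\]
which tends to $0$ as $t\to\infty$ provided $\epsilon<1/2-\beta_1/\gamma$. The hypothesis $\beta_1\in(0,\gamma/2)$ makes this threshold strictly positive and strictly less than $1/2$, so any $\epsilon$ in the open interval $(0,1/2-\beta_1/\gamma)$ verifies Condition~\ref{cond::integrabilityCondCLT}.

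The only place that benefits from some care is the uniform-in-$s$ bound $\bE[A(s)]\le C e^{\gamma s}$; this hinges on the $\Delta_n$ setting ensuring that $\nu_\gamma$ is a genuine probability measure, so that the renewal kernel is locally finite and the renewal equation for $a$ has a locally bounded solution. Otherwise the argument is essentially a direct computation once the reduction to the renewal framework has been made.
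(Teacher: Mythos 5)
Your proof is correct. It differs from the paper's in the mechanism used to control the population count: the paper invokes Nerman's weak law of large numbers (Theorem~\ref{thm::NermanWLLN}) to assert that the rescaled population count $e^{-\gamma\epsilon t}\#\{x:\sigma_x\le\epsilon t\}$ converges in probability to $1/(\gamma\mu_1)$, and then multiplies by the deterministic prefactor $e^{(\beta_1-\gamma/2+\gamma\epsilon)t}\to 0$; you instead bound the $L^1$ norm using the renewal-theoretic estimate $\bE[A(s)]\le C e^{\gamma s}$ and then apply Markov's inequality. Your route has the minor advantage of being self-contained modulo the one-sided renewal theorem (no need to invoke the full Nerman WLLN machinery), and it yields convergence to zero in $L^1$ as an intermediate by-product, which is slightly stronger than what Condition~\ref{cond::integrabilityCondCLT} asks for. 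One further point in your favour: the threshold you derive, $\epsilon\in(0,1/2-\beta_1/\gamma)$, is the one required when the population count at time $\epsilon t$ is rescaled correctly by $e^{-\gamma\epsilon t}$; the paper's displayed rescaling factor $e^{-\epsilon t}$ and the corresponding constraint $\epsilon<\gamma/2-\beta_1$ are consistent only with a misprint (they silently assume $\gamma=1$), so your more careful bookkeeping fixes a small inconsistency. Your preliminary observation that $\bar\zeta(t)=0$ for $t<0$ is true but not strictly needed here, since every summand is evaluated at $t-\sigma_x\ge(1-\epsilon)t\ge 0$ when $\sigma_x\le\epsilon t$ and $t\ge 0$.
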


\begin{proof}
	Let $\epsilon \in (0, \gamma/2 - \beta_1)$. Then, for $t \geq 0$,
	$$
	\left|e^{-\gamma t/2} \sum_{\sigma_x \leq \epsilon t} \bar \zeta(t- \sigma_i) \right| \leq e^{(\beta_1 + \epsilon - \gamma/2) t} e^{-\epsilon t} \sum_{\sigma_x \leq \epsilon t}c_1.
	$$
	By Theorem \ref{thm::NermanWLLN},
	$$
	e^{-\epsilon t} \sum_{\sigma_x \leq \epsilon t} 1 \to \frac{1}{\mu_1} \int_0^\infty e^{-\gamma t} dt = \frac{1}{\gamma \mu_1} \text{ in probability},
	$$
	as $t \to 0$. So the result follows since $\beta_1 + \epsilon < \gamma/2$.
\end{proof}

Coupled with Lemma \ref{lem::convergenceRate}, this result hints that we should not expect to have a central limit theorem when $z^\phi(t)- z^\phi(\infty)$ does not decay at least as fast as $e^{-\gamma t/2}$, the threshold for the second term in the right-hand side of \eqref{eq::splitApplicationCLTDirichlet} to converge to 0. We will explain more precisely why this is sharp in Remark \ref{rmk::sharpSpeedConvergence}.

Let us now discuss the moment condition for the central limit theorem. The next lemmas discuss a set of sufficient assumptions for Condition \ref{cond::momentCondCLT} to be satisfied for the $\Delta_n$-GBP. We introduce the function
\[ \psi(\theta) = \bE \sum_{i=1}^n e^{-\theta\gamma\sigma_i}, \]
and note that $\psi(1)=1$ and that $\psi(\theta)$ is strictly decreasing in $\theta$.

\begin{lemma}\label{lem::birthTimesSumControl}
Let $(\xi_x, L_x, \phi_x)_x$ be a $\Delta_n$-general branching process. Then, we have
$$
\bE \left[ \sum_{x \in \Sigma} e^{- y \gamma \sigma_x} \right] = \sum_{k = 0}^\infty \psi(y)^k,
$$
and the sum is finite if $y \in (1, \infty)$.
\end{lemma}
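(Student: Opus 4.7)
The approach is to decompose $\Sigma$ by generation and exploit that in a $\Delta_n$-GBP the underlying Galton--Watson tree is deterministically the regular $n$-ary tree, so $\Sigma \cap \N^k = \{1,\dots,n\}^k$. Writing
\begin{equation*}
T_k := \sum_{x \in \{1,\dots,n\}^k} e^{-y\gamma \sigma_x},
\end{equation*}
Tonelli (all terms being nonnegative) reduces the claim to showing $\bE T_k = \psi(y)^k$ for every $k \geq 0$, with the $k=0$ case being trivial since $\sigma_\emptyset = 0$ gives $T_0 = 1$.

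The key step is an inductive computation of $\bE T_k$. For $x = (x', x_k)$ with $x' \in \{1,\dots,n\}^{k-1}$, the birth time splits as $\sigma_x = \sigma_{x'} + (\sigma_x - \sigma_{x'})$, where the increment $\sigma_x - \sigma_{x'}$ is the $x_k$-th atom of the point process $\xi_{x'}$ attached to the parent $x'$. Let $\mathcal{G}_{k-1}$ be the sigma-algebra generated by the triples $(\xi_y, L_y, \phi_y)$ for all $y$ of length at most $k-2$; then $\sigma_{x'}$ is $\mathcal{G}_{k-1}$-measurable for each $|x'| = k-1$, while the point processes $\xi_{x'}$ for distinct such $x'$ are i.i.d.\ copies of $\xi$, independent of $\mathcal{G}_{k-1}$. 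Hence
\begin{equation*}
\bE[T_k \mid \mathcal{G}_{k-1}] = \sum_{|x'|=k-1} e^{-y\gamma \sigma_{x'}} \, \bE\!\left[\sum_{i=1}^n e^{-y\gamma(\sigma_{x',i}-\sigma_{x'})}\right] = \psi(y)\, T_{k-1},
\end{equation*}
and taking expectations and iterating yields $\bE T_k = \psi(y)^k$.

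Summing over $k$ gives the identity $\bE \sum_{x \in \Sigma} e^{-y\gamma\sigma_x} = \sum_{k \geq 0} \psi(y)^k$. For the finiteness assertion, since $\psi$ is strictly decreasing with $\psi(1) = 1$, we have $\psi(y) < 1$ whenever $y > 1$, and the geometric series converges. There is no substantial obstacle here; the only point worth care is the conditional independence setup, which is immediate from the i.i.d.\ assumption on the triples $(\xi_x, L_x, \phi_x)$.
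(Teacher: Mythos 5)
Your proof is correct and takes essentially the same route as the paper: decompose $\Sigma$ by generation, use monotone convergence/Tonelli, and compute $\bE T_k = \psi(y)^k$ recursively by conditioning on the filtration up to generation $k-1$. You spell out the conditional-independence step a bit more carefully than the paper (which simply says ``conditioning on the birth times''), but the underlying argument is identical.
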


\begin{proof}
By monotone convergence,
$$
\bE \left[ \sum_{x \in \Sigma} e^{- y \gamma \sigma_x} \right] = \sum_{k = 0}^\infty \bE \left[ \sum_{|x| = k}
e^{- y \gamma \sigma_x} \right].
$$
Further, conditioning on the birth times, we get that
$$
\bE \left[ \sum_{|x| = k}e^{- y \gamma \sigma_x}  \right] = \bE \left[ \sum_{|x| = k-1} \sum_{i = 1}^n e^{- y \gamma \sigma_x }
e^{- y \gamma (\sigma_{x,i} -\sigma_x)}  \right]= \psi(y) \bE \left[ \sum_{|x| = k-1}e^{- y \gamma \sigma_x}  \right].
$$
Iterating this and summing over $k$ proves the equality with the infinite sum. Noting that $\psi(y) < 1$ for $y \in (1, \infty)$
completes the proof.
\end{proof}

Relying on this, we can produce the following estimate on the third moment of the scaled process $\tilde Z$.

\begin{lemma}\label{lem::thirdMomentPositiveTimes}
	Let $(\xi_x, L_x, \phi_x)_x$ be a $\Delta_n$-general branching process. Assume that $\phi(t) = 0$ for $t < 0$, that
	$$
	|\bar \zeta(t)| \leq c_1 e^{\gamma t/ 2} \text{ for }  t \geq 0
	$$
	and that $v$ is bounded. Then,
	$$
	\sup_{t \geq 0} \bE |\tilde Z(t)|^3 < \infty.
	$$
\end{lemma}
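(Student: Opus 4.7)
The plan is to prove the third-moment bound indirectly, by first establishing $m_4(t) := \bE \bar Z(t)^4 = O(e^{2\gamma t})$ and then invoking Cauchy--Schwarz:
\[ \bE|\bar Z(t)|^3 = \bE\bigl[\bar Z(t)^2 \cdot |\bar Z(t)|\bigr] \leq \bigl(\bE\bar Z(t)^4\bigr)^{1/2}\bigl(\bE\bar Z(t)^2\bigr)^{1/2}. \]
Since $\bE\bar Z(t)^2 = e^{\gamma t}v(t) \leq \|v\|_\infty e^{\gamma t}$ by the boundedness of $v$, a bound of the form $m_4(t) = O(e^{2\gamma t})$ delivers $\bE|\bar Z(t)|^3 = O(e^{3\gamma t/2})$, which is exactly the conclusion. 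I take this detour because a direct recursion for $\bE|\bar Z|^3$ would rely on a Rosenthal-type inequality whose constant times $\psi(3/2)$ need not be less than $1$ (in particular, $\psi(3/2)$ is arbitrarily close to $1$ when the birth times are close to $0$); by contrast, the fourth-moment recursion will be seen to contract with coefficient $\psi(2)<1$, which holds automatically from the strict monotonicity of $\psi$ around $\psi(1)=1$.

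To set up the recursion for $m_4$, I would iterate the branching decomposition $\bar Z(t) = \bar \zeta_\emptyset(t) + S_1(t)$ with $S_1(t) = \sum_{i=1}^n \bar Z_i(t-\sigma_i)$, and integrate out the $\bar Z_i$'s conditionally on the birth times $\sigma_1,\dots,\sigma_n$ using
\[ \bE[S_1\mid\sigma]=0,\quad \bE[S_1^2\mid\sigma]=\textstyle\sum_i m_2(t-\sigma_i),\quad \bE[S_1^3\mid\sigma]=\textstyle\sum_i m_3(t-\sigma_i), \]
\[ \bE[S_1^4\mid\sigma]=\textstyle\sum_i m_4(t-\sigma_i)+3\textstyle\sum_{i\neq j}m_2(t-\sigma_i)m_2(t-\sigma_j), \]
where $m_k(s)=\bE\bar Z(s)^k$ and we use that $\bar Z(s)=0$ for $s<0$ (inherited from $\phi(t)=0$ for $t<0$ via the centring formula). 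Expanding $\bE(\bar\zeta_\emptyset+S_1)^4$ and then applying the assumed bounds $|\bar\zeta(t)|\leq c_1 e^{\gamma t/2}$, $m_2(s)\leq\|v\|_\infty e^{\gamma s}$, and the defining $\Delta_n$-GBP identity $\sum_i e^{-\gamma\sigma_i}=1$, each of the five terms in the expansion is bounded by a multiple of $e^{2\gamma t}$. The delicate one is $4\bE[\bar\zeta_\emptyset\sum_i m_3(t-\sigma_i)]$, which I would tame via the Cauchy--Schwarz bootstrap $|m_3(s)|\leq (m_2(s)m_4(s))^{1/2}$, closing the recursion back onto $m_4$ itself. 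Writing $F(t):=\sup_{0\leq s\leq t}e^{-2\gamma s}m_4(s)$ and normalising by $e^{2\gamma t}$, the bookkeeping will yield an inequality of the shape
\[ F(t)\leq A + B\,F(t)^{1/2} + \psi(2)\,F(t), \]
with $A$ depending on $c_1,\|v\|_\infty$ and $B$ proportional to $\psi(3/2)\,c_1\|v\|_\infty^{1/2}$. Because $\psi(2)<1$, Young's inequality
$B F^{1/2}\leq\tfrac{1-\psi(2)}{2}F+\tfrac{B^2}{2(1-\psi(2))}$
absorbs the sub-linear term into a fraction of $F$ and yields $\sup_t F(t)<\infty$, hence $m_4(t)=O(e^{2\gamma t})$ and the conclusion.

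The main obstacle is precisely the signed $m_3$ cross term: without the Cauchy--Schwarz bootstrap $|m_3|\leq(m_2 m_4)^{1/2}$, the recursion cannot be closed, as any naive absolute-value bound on $m_3$ would introduce an $F(t)$-coefficient too large to be absorbed. A secondary subtlety is that, when $\phi$ is allowed to depend on the progeny of $\emptyset$, the centred characteristic $\bar\zeta_\emptyset$ is no longer measurable with respect to $\sigma(\sigma_1,\dots,\sigma_n)$, so one cannot simply factor it out of the conditional expectations above; in that case I would instead bound every cross term $\bE[\bar\zeta_\emptyset^k S_1^\ell]$ in absolute value by $c_1^k e^{k\gamma t/2}\,\bE|S_1|^\ell$, the latter being controlled via the same conditional moment identities (with one application of the conditional Rosenthal inequality for $\ell=3$). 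This changes only the constants $A$ and $B$ and preserves the decisive $\psi(2)F(t)$ contraction, so the argument closes in the same way.
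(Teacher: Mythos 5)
Your approach is genuinely different from the paper's, and your stated motivation for the detour rests on a misconception. The paper does not set up a one-step renewal inequality for $\bE|\bar Z(t)|^3$ with a Rosenthal constant in front of $\psi(3/2)$. Instead it fully iterates the exact cubic identity $\bar Z(t)^3 = W_\emptyset(t) + \sum_i \bar Z_i(t-\sigma_i)^3$ to get $\bar Z(t)^3 = \sum_{x\in\Sigma} W_x(t-\sigma_x)$, takes absolute values term-by-term, and then bounds the resulting four nonnegative sums \eqref{eq::firstBitThridMoment}--\eqref{eq::fourthBitThirdMoment} using the pointwise bound on $|\bar\zeta|$, the $L^2$ bound from boundedness of $v$, conditional Cauchy--Schwarz on subtrees, and Lemma~\ref{lem::birthTimesSumControl}. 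The only summability needed is $\psi(3/2)<1$, which is automatic from strict monotonicity of $\psi$ and $\psi(1)=1$, exactly the same reason you cite for $\psi(2)<1$. No Rosenthal constant appears anywhere, so the ``constant times $\psi(3/2)$'' obstruction you describe is not present in the direct argument. Your route via the fourth moment and the bootstrap $|m_3|\leq(m_2 m_4)^{1/2}$ is more elaborate and also proves more (a fourth-moment bound), which is not needed here.

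There is also a genuine gap in your argument as written. The absorption step ``$B F^{1/2}\leq\tfrac{1-\psi(2)}{2}F+\tfrac{B^2}{2(1-\psi(2))}$, hence $\sup_t F(t)<\infty$'' is only valid once you know \emph{a priori} that $F(t):=\sup_{0\leq s\leq t}e^{-2\gamma s}m_4(s)$ is finite; the inequality $F\leq A+B\sqrt F+\psi(2)F$ is satisfied vacuously by $F=\infty$. Since $\bar Z(t)$ is a sum over a random tree with unboundedly many terms, $m_4(t)<\infty$ is not immediate from the hypotheses. You would need, for example, a truncation argument (cut the tree at generation $k$, prove the uniform-in-$k$ bound by your recursion, then pass to the limit by Fatou/monotone convergence) to make the fixed-point step legitimate. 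The paper sidesteps this entirely: once $|\bar Z(t)|^3$ is dominated by a nonnegative sum, Tonelli lets one bound the expectation of that sum term by term without ever needing to know in advance that $\bE|\bar Z(t)|^3$ is finite. If you supply the truncation step (and are a little more careful in the progeny-dependent case, where the cross terms $\bE[\bar\zeta_\emptyset^k S_1^\ell]$ must indeed be handled via the pointwise bound on $\bar\zeta$ and Cauchy--Schwarz, as you note), your argument does close, but it is appreciably longer and less elementary than the paper's.
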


\begin{proof}
	Notice that
	\begin{equation}\label{z3w}
	\bar Z(t)^3 =  W_\emptyset(t) + \sum_{i=1}^n \bar Z_i(t- \sigma_i)^3,
	\end{equation}
	where
	\begin{align*}
	W_\emptyset(t) = & \,\bar \zeta_\emptyset(t)^3 + 3 \bar \zeta_\emptyset(t)^2 \sum_{i=1}^n \bar Z_i(t- \sigma_i) + 3 \bar \zeta_\emptyset(t) \sum_{i,j= 1}^{n} \bar Z_i(t- \sigma_i) \bar Z_j (t- \sigma_j)\\
	 		& + \sum_{\substack{i,j,k= 1\\\text{not all equal}}}^3 \bar Z_i(t- \sigma_i) \bar Z_j (t- \sigma_j) \bar Z_k (t- \sigma_k).
	\end{align*}
	Therefore, it is clear that $|\bar Z(t)|^3$ is bounded by
	\begin{align}
	  \sum_{x \in \Sigma} & |\bar \zeta_x(t- \sigma_x)|^3 \label{eq::firstBitThridMoment}\\
			& + 3 \sum_{x \in \Sigma} |\bar \zeta_x(t- \sigma_x)|^2 \sum_{i = 1}^n |\bar Z_{x,i}(t- \sigma_{x,i})|\label{eq::secondBitThridMoment}\\
			& + 3 \sum_{x \in \Sigma}| \bar \zeta_x(t- \sigma_x)| \sum_{i,j= 1}^{n}| \bar Z_{x,i}(t- \sigma_{x,i})| \,| \bar Z_{x,j} (t- \sigma_{x,j})|\label{eq::thirdBitThirdMoment}\\
			& + \sum_{x \in \Sigma}\sum_{\substack{i,j,k= 1\\\text{not all equal}}}^n |\bar Z_{x,i}(t- \sigma_i)|\,| \bar Z_{x,j} (t- \sigma_j)|\, | \bar Z_{x,k} (t- \sigma_k)|.\label{eq::fourthBitThirdMoment}
	\end{align}
	To prove the lemma, it is sufficient to check that $e^{-3 \gamma t/2}$ times the expectation of each of these terms is bounded, which we do now.
	
	To deal with \eqref{eq::firstBitThridMoment}, note that
	\begin{align*}
	e^{-3 \gamma t/2} \bE \sum_{x \in \Sigma} 	|\bar \zeta_x(t- \sigma_x)|^3 \leq c_1^3 \bE \sum_{x \in \Sigma} e^{- \frac 32 \gamma  \sigma_x} = c_1^3 \sum_{k = 0}^\infty \psi(3/2)^k < \infty,
	\end{align*}
	thanks to Lemma \ref{lem::birthTimesSumControl}.
	
	Notice that our assumption on the boundedness of $v$ implies that
	$$
	\bE \bar Z(t)^2 \leq c_2 e^{\gamma t}.
	$$
	Therefore, we can control the term corresponding to \eqref{eq::secondBitThridMoment} using that
	\begin{align*}
		e^{-3 \gamma t/2} &\bE \sum_{x \in \Sigma} |\bar \zeta_x(t- \sigma_x)|^2 \sum_{i = 1}^n |\bar Z_{x,i}(t- \sigma_{x,i})|\\
		 & \leq c_1^2e^{-3 \gamma t/2}\bE \left[\sum_{x \in \Sigma} e^{\gamma(t- \sigma_x)} \sum_{i = 1}^n \bE [\bar Z_{x,i}(t- \sigma_{x,i})^2 | \cF_x]^{1/2} \right] \\
		 & \leq c_1^2 c_2 \bE \left[ \sum_{ x \in \Sigma} e^{- \gamma \sigma_i} \sum_{i = 1}^n e^{-\gamma \sigma_{x,i} /2} \right]\\
		 & \leq n c_1^2 c_2 \sum_{k = 1}^\infty \psi(3/2)^k\\
		 & < \infty,
	\end{align*}
	using that $\sigma_{x,i} \geq \sigma_x$ and Lemma \ref{lem::birthTimesSumControl}.
	
	To deal with \eqref{eq::thirdBitThirdMoment}, we proceed similarly to get that
	\begin{align*}
	e^{-3 \gamma t/2} & \bE 	\sum_{x \in \Sigma}| \bar \zeta_x(t- \sigma_x)| \sum_{i,j= 1}^{n}| \bar Z_{x,i}(t- \sigma_{x,i})| \,| \bar Z_{x,j} (t- \sigma_{x,j})|\\
		& \leq n e^{-3 \gamma t/2} \bE 	\sum_{x \in \Sigma}| \bar \zeta_x(t- \sigma_x)| \sum_{i= 1}^{n}| \bar Z_{x,i}(t- \sigma_{x,i})|^2\\
		& \leq n^2 c_1 c_2 \sum_{k = 0}^\infty \psi(3/2)^k.
	\end{align*}	
	
	The similar argument needed to bound the term corresponding to \eqref{eq::fourthBitThirdMoment} relies on the observation that, if $i,j$ and $k$ are not all equal, then, assuming without loss of generality that $i$ is different, we have
	\begin{align*}
 \bE&\{|\bar Z_i(t- \sigma_i)|\,| \bar Z_j (t- \sigma_j)|\, | \bar Z_k (t- \sigma_k)| | \cF_ \emptyset\}\\
 & = \bE\{|\bar Z_i(t- \sigma_i)|| \cF_ \emptyset\} \bE \{ | \bar Z_j (t- \sigma_j)|\, | \bar Z_k (t- \sigma_k)| |\cF_ \emptyset\}\\
 &  \leq (\bE[ \bar Z_i(t- \sigma_i)^2 | \cF_ \emptyset])^{1/2} (\bE[ \bar Z_j(t- \sigma_j)^2| \cF_ \emptyset])^{1/2} (\bE[ \bar Z_k(t- \sigma_k)^2|\cF_ \emptyset])^{1/2}.
	\end{align*}
	Using this and reasoning as above then shows that
	\begin{align*}
	e^{-3 \gamma t/2} & \bE \sum_{x \in \Sigma}\sum_{\substack{i,j,k= 1\\\text{not all equal}}}^n |\bar Z_{x,i}(t- \sigma_{x,i})|\,| \bar Z_{x,j} (t- \sigma_{x,j})|\, | \bar Z_{x,k} (t- \sigma_{x,k})|\\
	 & \leq n^3 c_2 \sum_{k = 0}^\infty \psi(3/2)^k.
	\end{align*}
	The proof is complete.
\end{proof}

These results are easily combined to produce a version of the CLT for the case with weights on the simplex.

\begin{theorem}
	Let $(\xi_x, L_x, \phi_x)_x$ be a $\Delta_n$-general branching process such that $\phi(t) = 0$ for $t <0$. Assume that
	$$
	|z^\phi(t) - z^\phi(\infty)| \leq c_1 e^{-\beta_1 t},
	$$
	for some $\beta_1 \in (\gamma/2, \infty)$, that
	$$
	|\phi(t)| \leq c_2e^{\beta_2 t},
	$$
	for some $\beta_2 \in (0, \gamma/2)$, and that $v$ is bounded with $v(t)\to v(\infty)$ as $t\to\infty$. Then the CLT holds in that
	\[ e^{-\gamma t/2} Z^{\bar{\zeta}}(t) \to \tilde{Z}_{\infty}, \text{ in distribution,} \]
	as $t \to \infty$, where the distribution of $\tilde Z_\infty$ is normal with mean 0 and variance $v(\infty)$.
\end{theorem}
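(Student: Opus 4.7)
The plan is to verify that the hypotheses of Theorem~\ref{thm::CLT} hold for the centred process $\bar Z = Z^{\bar\zeta}$ by chaining together the technical lemmas already established in this section, and then to identify the limiting law using the fact that for a $\Delta_n$-GBP the fundamental martingale satisfies $M_t \equiv 1$.

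First, I would translate the decay assumption on $z^\phi$ into a growth estimate for the centred characteristic $\bar\zeta$. Observe that by replacing $\beta_1$ with $\min(\beta_1,\gamma)\in(\gamma/2,\gamma]$ if necessary, we may apply Lemma~\ref{lem::convergenceRate}, which together with the hypothesis $|\phi(t)|\leq c_2 e^{\beta_2 t}$ yields
\[
|\bar\zeta(t)| \leq C e^{\beta t}, \qquad t \geq 0,
\]
where $\beta = \max\{\beta_2,\,\gamma-\beta_1\}$. The point of the assumption $\beta_1>\gamma/2$ is precisely that $\gamma-\beta_1<\gamma/2$, and combined with $\beta_2<\gamma/2$ this gives $\beta\in(0,\gamma/2)$. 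This is the single estimate that drives everything else.

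With this in hand, Condition~\ref{cond::integrabilityCondCLT} follows immediately from Lemma~\ref{lem::checkIntegrabilityCondition}. For Condition~\ref{cond::momentCondCLT}, I would take $\kappa=1$ and appeal to Lemma~\ref{lem::thirdMomentPositiveTimes}: its hypotheses require $|\bar\zeta(t)|\leq c e^{\gamma t/2}$ on $[0,\infty)$ and the boundedness of $v$, the first of which follows from $\beta<\gamma/2$ and the second of which is a direct assumption of the theorem. Since the theorem also assumes $v(t)\to v(\infty)$ and $v$ bounded, all the remaining hypotheses of Theorem~\ref{thm::CLT} are now in place.

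Applying Theorem~\ref{thm::CLT} yields $\tilde Z(t) = e^{-\gamma t/2} Z^{\bar\zeta}(t) \to \tilde Z_\infty$ in distribution with characteristic function $\bE[e^{i\theta \tilde Z_\infty}] = \bE[e^{-\frac12 \theta^2 v(\infty) M_\infty}]$. The final step is to note that for a $\Delta_n$-GBP we have $\sum_{i=1}^n e^{-\gamma\sigma_i}=1$ almost surely, hence $M_t\equiv 1$ and $M_\infty=1$; the characteristic function therefore reduces to $e^{-\frac12 \theta^2 v(\infty)}$, identifying $\tilde Z_\infty$ as $N(0,v(\infty))$. I do not anticipate any genuine obstacle: the proof is essentially bookkeeping on exponential rates, and the subtle point — that the gap between $\gamma-\beta_1$ and $\gamma/2$ on one hand, and between $\beta_2$ and $\gamma/2$ on the other, is exactly what the preceding lemmas demand — has already been engineered into the hypotheses.
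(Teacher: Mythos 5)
Your proof is correct and follows exactly the route the paper takes: chaining Lemma~\ref{lem::convergenceRate}, Lemma~\ref{lem::checkIntegrabilityCondition} and Lemma~\ref{lem::thirdMomentPositiveTimes} to verify Conditions~\ref{cond::integrabilityCondCLT} and~\ref{cond::momentCondCLT}, then invoking Theorem~\ref{thm::CLT} together with $M_\infty\equiv 1$ for a $\Delta_n$-GBP. The paper's version is essentially a one-line citation of those three lemmas; your write-up supplies the rate bookkeeping (in particular the observation that $\beta=\max\{\beta_2,\gamma-\beta_1\}<\gamma/2$, with $\beta_1$ capped at $\gamma$ to fit the hypothesis of Lemma~\ref{lem::convergenceRate}) which the paper leaves implicit.
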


\begin{proof}
	It follows from Lemmas \ref{lem::convergenceRate}, \ref{lem::checkIntegrabilityCondition} and \ref{lem::thirdMomentPositiveTimes}
	that Conditions \ref{cond::integrabilityCondCLT} and \ref{cond::momentCondCLT} are satisfied. This, with our other assumptions,
	gives the required conditions for Theorem~\ref{thm::CLT}.
\end{proof}

\section{Spectrum of random self-similar Cantor strings}\label{sec::CantorStrings}

In this section, we discuss the spectral asymptotics of a family of open subsets of $[0,1]$ whose boundary is a random self-similar
Cantor set generated using a distribution on the simplex, called $\Delta_n$-random Cantor strings. Spectral asymptotics for a
variety of Cantor strings have been studied extensively in
\cite{HL2006, LP1993, LP1996, LvF2000} and references therein. We specialise the discussion to $\Delta_n$-random
Cantor strings here so that we can study the fluctuations of the spectrum using the results of the previous section.

\subsection{Construction}

Choose $\gamma \in (0,1)$, $n \geq 2$ and consider the random vector
$(T_1, \dots, T_n)$ with a probability law on the simplex.
Start with the unit interval $K_0 = [0,1]$ and replace it by $n$ equally spaced intervals of length $T_1^{1/\gamma}, \dots,
T_n^{1/\gamma}$. Replace each of these intervals by $n$ intervals created independently with the same procedure. Iterating
indefinitely, we obtain a decreasing sequence of compact sets $(K_j, j \geq 0)$, and $K = \cap_j K_j$ is a statistically self-similar
Cantor set. Indeed, in the notation of Subsection \ref{subsec::appGBPtoFractals}, it suffices to set
$$
(N, R_1, \dots, R_N) = (n, T_1^{1/\gamma}, \dots, T_n^{1/\gamma});
$$
the maps $(\Phi_1, \dots, \Phi_N)$ can easily be deduced from this. The corresponding general branching process $(\xi, L)$
(no characteristic just yet) is obtained as described in Subsection \ref{subsec::appGBPtoFractals}. By construction
this is a $\Delta_n$-general branching process and its Malthusian parameter is $\gamma$.
Figure~\ref{fig::cantorString} depicts the first 4 iterations in the construction of the set $K$ with the distribution $\Dirichlet(1, 1, 1)$.

The set in which we are interested in this chapter is $U = [0,1]\setminus K$, whose boundary is $K$ by construction. Thanks to the
Lindel\"of property, $U$ is a countable union of intervals. As such, $U$ is a \emph{random string} in the sense of
\cite{LP1993, LP1996, LvF2000} and references therein.

\subsection{Fractal dimension}

Consider the set $U$ defined above and recall that the Hausdorff dimension of $\partial U = K$ is $\gamma$. Using Theorem \ref{thm::NermanSLLN}, we now show that the Minkowski dimension of $\partial U$ exists and is also equal to $\gamma$ almost surely.

\begin{theorem}
The Minkowski dimension of the $\Delta_n$-random Cantor string $K$ is almost surely equal to $\gamma$.
\end{theorem}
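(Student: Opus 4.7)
My plan is to translate the box-counting problem into a characteristic counting process on the underlying $\Delta_n$-general branching process, apply Nerman's strong law of large numbers (Theorem~\ref{thm::NermanSLLN}) to obtain a sharp first-order asymptotic, and read off the Minkowski dimension from it. Concretely, set $t = -\log\epsilon$. For each $x \in \Sigma$, let $I_x$ denote the compact sub-interval of $[0,1]$ corresponding to $x$ in the construction of $K$; then $|I_x|=e^{-\sigma_x}$, and the collection $\{I_x : x \in \Lambda_t\}$ is a disjoint cover of $K$ by intervals of length at most $\epsilon$. Hence if $N(\epsilon)$ denotes the minimal number of $\epsilon$-intervals needed to cover $K$, one has $N(\epsilon) \le |\Lambda_t|$.

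The next step is to recognise $|\Lambda_t|$ as a characteristic counting process, namely $|\Lambda_t| = Z^\phi(t)$ with
$$
\phi(t) = \bigl(\xi(\infty) - \xi(t)\bigr)\mathbf{1}_{[0,\infty)}(t),
$$
since each $x \in \Lambda_t$ is counted once as a child of its parent, which was born by time $t$. I would then verify the hypotheses of Theorem~\ref{thm::NermanSLLN} for this $\phi$: it is non-negative, vanishes on $(-\infty,0)$, and since $\xi(\infty)=n$ is deterministic and bounded, both parts of Condition~\ref{cond::ghFunCondition} hold with $g(t)=h(t) = 1 \wedge t^{-2}$ (cf.\ the remark following Condition~\ref{cond::ghFunCondition}). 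We assume, as announced in the setup of Section~\ref{sec::dirichletWeights}, that $\nu_\gamma$ is non-lattice. Since $M_\infty \equiv 1$ for a $\Delta_n$-GBP, Theorem~\ref{thm::NermanSLLN} yields
$$
e^{-\gamma t}|\Lambda_t| \longrightarrow z^\phi(\infty) \qquad \text{a.s.}
$$
The limit is strictly positive, because $u^\phi(s) = e^{-\gamma s}(n - \nu(s))$ is non-negative and strictly positive on a neighbourhood of the origin, so $z^\phi(\infty) = \mu_1^{-1}\int_0^\infty u^\phi(s)\,ds > 0$.

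From this I obtain the upper bound
$$
\overline{d}_M(K) \;=\; \limsup_{\epsilon \to 0}\frac{\log N(\epsilon)}{\log(1/\epsilon)} \;\le\; \limsup_{t\to\infty}\frac{\log |\Lambda_t|}{t} \;=\; \gamma \quad \text{a.s.}
$$
For the matching lower bound I would invoke the standard inequality $d_H(K) \le \underline{d}_M(K)$ together with the Hausdorff dimension result of Subsection~\ref{subsec::appGBPtoFractals}, which identifies $d_H(K)=\gamma$ almost surely for the $\Delta_n$ construction. Combining the two bounds gives the existence and value of the Minkowski dimension.

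The main obstacle is essentially conceptual rather than technical: choosing the right characteristic so that $Z^\phi(t)$ exactly counts the natural covering family at scale $e^{-t}$, and verifying positivity of the Nerman limit. The moment hypotheses are trivial here because $n$ is deterministic and bounded, and the role of the non-lattice assumption on $\nu_\gamma$ is simply to ensure convergence (rather than subsequential convergence along a lattice) in the renewal-type limit; the lattice case could be handled, at the cost of replacing the limit by a bounded fluctuating function, by the remark in Section~\ref{lattice}.
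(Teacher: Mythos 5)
Your proposal is correct and takes essentially the same route as the paper: both use the characteristic $\phi(t)=\xi(\infty)-\xi(t)$ so that $Z^\phi(t)=|\Lambda_t|$ bounds the covering number at scale $e^{-t}$, apply Theorem~\ref{thm::NermanSLLN} to get $e^{-\gamma t}Z^\phi(t)\to z^\phi(\infty)\in(0,\infty)$ a.s.\ and hence $\overline{\dim}_M K\le\gamma$, and close the argument with the matching lower bound $\gamma=\dim_H K\le\underline{\dim}_M K$. You merely spell out the verification of Condition~\ref{cond::ghFunCondition} and the positivity of the limit, which the paper leaves implicit.
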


\begin{proof}
	Consider the characteristic function defined by
	$$
	\phi(t) = \xi(\infty) - \xi(t).
	$$
	The corresponding counting process $Z^\phi$ counts the number of offspring born after time $t$ to parents born up to time $t$. As such, $Z^\phi(t)$ is an upper bound for the covering number $N(e^{-t}, K)$ of $K$ with balls of radius $e^{-t}$.
	
	By the strong law of large numbers, we have
	$$
	e^{-\gamma t} Z^\phi(t) \to \frac{1}{\mu_1} \int_0^\infty \bE e^{-\gamma s}\phi(s) ds \in(0, \infty), \text{ a.s.},
	$$
	as $t \to \infty$. This is easily used to check that $\overline{\dim}_M K \leq \gamma$ almost surely. The result follows since $\dim K = \gamma$ almost surely.
\end{proof}

\begin{figure}
\includegraphics[width = 7cm]{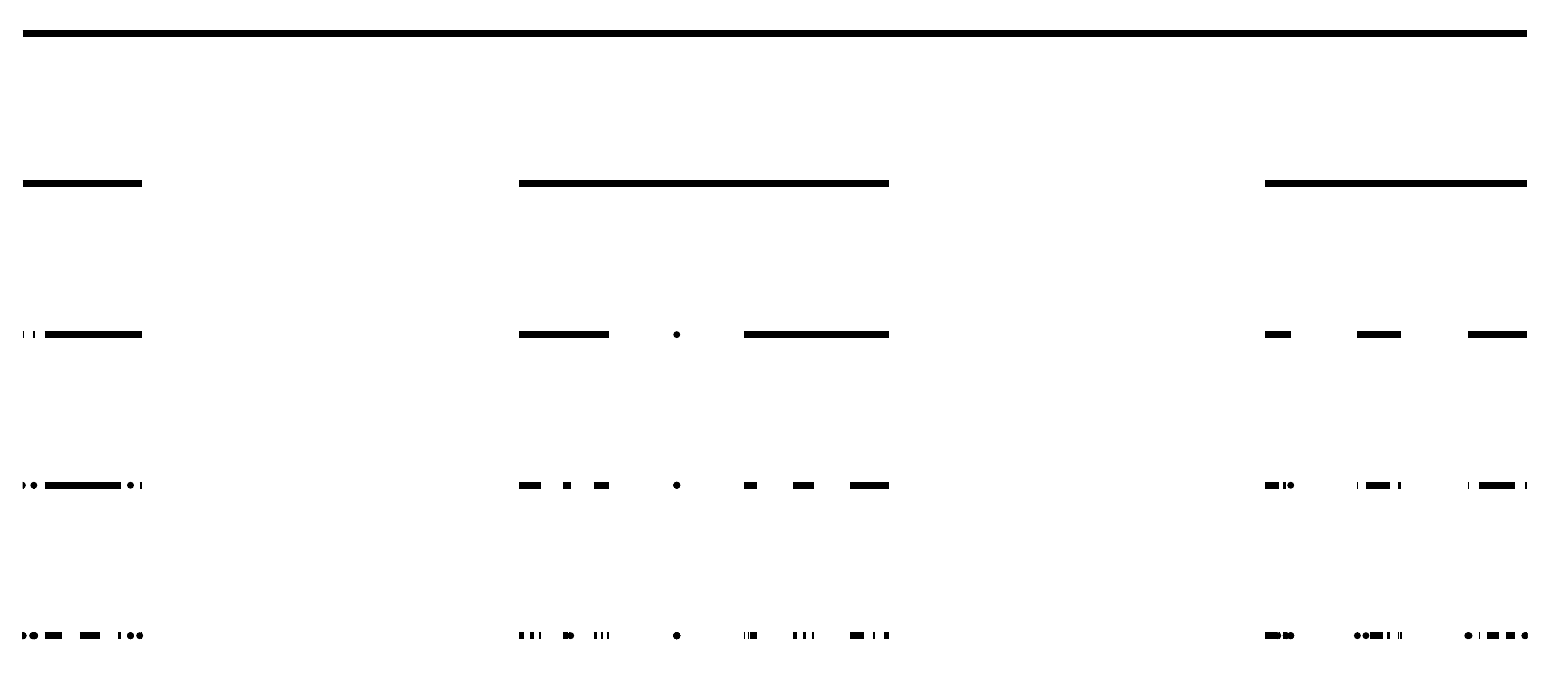}
\caption{First 4 iterations of the construction of $K$ with the distribution $\Dirichlet(1, 1, 1)$ and $\gamma = 0.6$.}
\label{fig::cantorString}
\end{figure}

\subsection{Spectrum of the Dirichlet Laplacian}

Recall that the eigenvalue counting function for a domain $D$ (or a countable union of domains) of $\R$ is defined by
$$
N_D(\lambda) = \#\{\text{eigenvalues of } - \Delta \leq \lambda\}.
$$
Following \cite{LV1996}, we define
$$
\bar N_D(\lambda) = \frac{1}{\pi}\vol_1(D)\lambda^{1/2} - N_D(\lambda).
$$
The function $\bar{N}_D$ has the property that if $D_1$ and $D_2$ are disjoint, then
\begin{equation}\label{eq::separationN}
\bar N_{D_1 \cup D_2}(\lambda) = \bar N_{D_1}(\lambda) + \bar N_{D_2}(\lambda).
\end{equation}
Furthermore, for $r\in (0, \infty)$, a change of variables shows that
\begin{equation}\label{eq::scalingN}
\bar N_{r D}(\lambda ) = \bar N_D(r^2 \lambda).
\end{equation}

In our applications of the central limit theorem, we will rely on the following assumption, which was discussed in the previous section.

\begin{assumption}\label{ass::convergenceRate}
The rate of convergence in the renewal theorem satisfies
$$
\left|G(t) - \frac{\mu^2 + \sigma^2}{2 \sigma^2}\right| \leq c_1 e^{- \beta_1 t}
$$
for large $t$, for some $\beta_1 \in (\gamma/2, \infty)$
\end{assumption}

We may now use the general branching process to study $\bar N_U$.

\begin{theorem}\label{thm::spectralAsympString}
Let $K$ be a $\Delta_n$-random Cantor string with dimension $\gamma$ and consider the string $U = [0,1] \setminus K$. Then,
$$
\lambda^{-\gamma/2}\bar N_U(\lambda) \to \mathfrak{N}, \text{ a.s.\ and in } L^1,
$$
as $\lambda \to \infty$, for some strictly positive constant $\mathfrak{N}$.

Furthermore, if Assumption \ref{ass::convergenceRate} holds, then
$$
\lambda^{\gamma/4} (\lambda^{-\gamma/2}\bar N_U(\lambda) - \mathfrak{N}) \to Z \text{ in distribution},
$$
as $\lambda \to \infty$, where $Z$ has a normal distributions with mean 0 and variance $\sigma^2$ for some strictly
positive constant $\sigma$.
\end{theorem}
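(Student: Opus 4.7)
The strategy is to encode the eigenvalue counting function of $U$ as a characteristic counting process for the $\Delta_n$-general branching process associated with $K$, so that part (i) follows from Nerman's strong law (Theorem~\ref{thm::NermanSLLN}) and part (ii) from the $\Delta_n$-GBP central limit theorem stated at the end of Section~\ref{sec::dirichletWeights}. Making the time change $t=\tfrac{1}{2}\log\lambda$, so that $\sqrt\lambda=e^{t}$, and recalling that $\bar N_{[0,a]}(\lambda)=\{a\sqrt\lambda/\pi\}$ is a fractional part, the self-similar decomposition of $U$ at the first generation together with the additivity~\eqref{eq::separationN} and the scaling~\eqref{eq::scalingN} yields the recursion
\[
\bar N_{U}(e^{2t}) \;=\; \phi_{\emptyset}(t) \;+\; \sum_{i=1}^{n} \bar N_{U^{(i)}}\!\bigl(e^{2(t-\sigma_{i})}\bigr),
\]
where $U^{(i)}$ are i.i.d.\ copies of $U$ and the characteristic
\[
\phi(t) \;=\; \sum_{j=1}^{n-1} \bigl\{a_{j} e^{t}/\pi\bigr\}\bone_{t\geq 0}
\]
collects the fractional part contributions of the $n-1$ first-generation gaps of lengths $a_{1},\dots,a_{n-1}$. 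Iterating identifies $\bar N_{U}(e^{2t}) = Z^{\phi}(t)$.

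For part~(i), $\phi$ is non-negative, vanishes on $(-\infty,0)$, and is bounded above by $n-1$, so $u^{\phi}$ is directly Riemann integrable, Condition~\ref{cond::ghFunCondition} is automatic since $\xi(\infty)=n$, and $M\equiv 1$ by the simplex normalisation. Theorem~\ref{thm::NermanSLLN} then gives $e^{-\gamma t}Z^{\phi}(t)\to\mathfrak{N}:=\mu_{1}^{-1}\int_{0}^{\infty}u^{\phi}(s)\,ds$ almost surely and in $L^{1}$, and $\mathfrak{N}>0$ because at least one summand $\{a_{j}e^{t}/\pi\}$ is strictly positive on an open $t$-interval, so that $u^{\phi}$ is as well.

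For part~(ii) we apply the $\Delta_n$-GBP central limit theorem concluding Section~\ref{sec::dirichletWeights}. Boundedness of $\phi$ yields the growth bound $|\phi(t)|\leq c_{2} e^{\beta_{2} t}$ for any $\beta_{2}\in(0,\gamma/2)$. Assumption~\ref{ass::convergenceRate} together with Lemma~\ref{lem::convergenceGtoZ}, using the exponential decay of $u^{\phi}$, upgrades to $|z^{\phi}(t)-z^{\phi}(\infty)|\leq c_{1}e^{-\beta_{1}t}$ with $\beta_{1}>\gamma/2$; Lemma~\ref{lem::convergenceRate} then furnishes $|\bar\zeta(t)|\leq c_{3}e^{\gamma t/2}$, so Lemmas~\ref{lem::checkIntegrabilityCondition} and~\ref{lem::thirdMomentPositiveTimes} supply Conditions~\ref{cond::integrabilityCondCLT} and~\ref{cond::momentCondCLT}. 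Boundedness of $v$ and convergence $v(t)\to v(\infty)$ are obtained by solving the renewal equation~\eqref{vrenewal} once the forcing function $r$ is shown to be directly Riemann integrable, which follows from the exponential bound on $\bar\zeta$ inserted into~\eqref{eq::varianceBranchingProcess}. The CLT then gives $e^{-\gamma t/2}\bar Z(t)\to Z$ in distribution with $Z$ normal of mean $0$ and variance $v(\infty)$, and the decomposition~\eqref{eq::splitApplicationCLTDirichlet} together with Slutsky's lemma transfers this to the statement of the theorem after the substitution $t=\tfrac{1}{2}\log\lambda$, using Assumption~\ref{ass::convergenceRate} to control the deterministic remainder $e^{\gamma t/2}[z^{\phi}(t)-\mathfrak{N}]$.

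The main obstacle is the strict positivity of $\sigma^{2}=v(\infty)$. Once the renewal equation~\eqref{vrenewal} is solved one has $v(\infty)=\mu_{1}^{-1}\int r(s)\,ds$, so it suffices to show that the forcing function $r$ is not identically zero. Since $\bar\zeta$ combines the fractional-part characteristic $\phi$ with the smooth deterministic correction involving $z^{\phi}$, one can exclude the degeneracy $\bar Z\equiv 0$ by evaluating at a well-chosen $t$ (for example a $t$ at which several of the $\{a_{j}e^{t}/\pi\}$ are close to $1/2$), exploiting the non-trivial randomness of the weights $(T_{1},\dots,T_{n})$ to produce a genuine second-moment contribution. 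I expect this lower bound on $v(\infty)$ to be the technically delicate step, with everything else following from the renewal and moment machinery developed in the preceding sections.
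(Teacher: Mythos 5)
Your overall strategy --- encoding $\bar N_U(e^{2t})$ as a general branching process characteristic counting process, applying Nerman's strong law for part~(i) and the $\Delta_n$-GBP central limit theorem of Section~3 for part~(ii) --- is exactly the approach the paper takes, and the chain of lemmas you invoke (Lemma~\ref{lem::convergenceGtoZ} via Assumption~\ref{ass::convergenceRate}, Lemma~\ref{lem::convergenceRate}, Lemmas~\ref{lem::checkIntegrabilityCondition} and~\ref{lem::thirdMomentPositiveTimes}, the renewal equation~\eqref{vrenewal}, then~\eqref{eq::splitApplicationCLTDirichlet} plus Slutsky) is the right one.

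There is, however, a genuine error in your definition of the characteristic. You set
$\phi(t)=\sum_{j=1}^{n-1}\{a_j e^t/\pi\}\,\bone_{t\geq 0}$,
cutting off at $t=0$ so that $\phi$ vanishes on $(-\infty,0)$ as Nerman's theorems require, and then assert that iterating the self-similar recursion gives $\bar N_U(e^{2t})=Z^\phi(t)$. That identification only holds for the \emph{uncut} characteristic: the fractional-part contribution $\{a_j e^t/\pi\}$ of a gap is present for all $t\in\R$, not just $t\geq 0$. With the cut-off version, the recursion $\bar N_U(e^{2t})=\phi_\emptyset(t)+\sum_{i}\bar N_{U^{(i)}}(e^{2(t-\sigma_i)})$ fails for $t<0$, and iterating omits the contributions of all individuals $x$ with $\sigma_x>t$. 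That omitted sum is strictly positive, so $z^\phi(\infty)$ computed from your cut-off characteristic is strictly smaller than the true constant $\mathfrak{N}$. The paper resolves this by keeping the two-sided characteristic $\phi(t)=(n-1)\bar N_{[0,1]}(S^2e^{2t})$, recording the bounds~\eqref{eq::technicalBounds1}, and passing to the characteristic $\chi$ of~\eqref{eq::oneSidedToTwoSided}: $\chi$ vanishes on $(-\infty,0)$, is bounded, and satisfies $Z^\chi(t)=Z^\phi(t)=\bar N_U(e^{2t})$ for all $t\geq 0$, so Nerman's strong law applies without changing the limit. For the CLT part the paper similarly works with $\bar\zeta^\chi$, comparing it with $\bar\zeta^\phi$ (they coincide for $t\geq 0$, and $r^\phi$ has two-sided exponential decay) to establish the variance convergence via the double-sided renewal theorem of \cite{LV1996}. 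If you replace your cut-off $\phi$ by $\chi$ throughout, your argument becomes correct.

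On strict positivity of $\sigma^2=v(\infty)$: the paper treats this quite lightly, simply asserting $v(\infty)\in(0,\infty)$ once the renewal limit is established. The underlying reason is that $r^\phi(t)=e^{-\gamma t}\bE|\bar\zeta^\phi(t)|^2\geq 0$ (the cross-terms in $\rho$ vanish in expectation because $\bar\zeta^\phi_\emptyset$ is $\cF_1$-measurable and the $\bar Z_i$ are centred given $\cF_1$), so $v(\infty)=\mu_1^{-1}\int r^\phi(s)\,ds>0$ as soon as $\bar\zeta^\phi(t)$ is not a.s.\ constant for a.e.\ $t$, which is immediate from the continuous law of $(T_1,\dots,T_n)$. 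Your instinct to single this out is reasonable, but the paper does not develop the more elaborate argument you anticipate, and none is needed.
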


\begin{proof}
Define the random variable $S$ by
$$ S = (1- R_1 - \cdots - R_n)/(n-1). $$
By construction and the properties in \eqref{eq::separationN} and \eqref{eq::scalingN}, we have
$$
	\bar N_U(\lambda) = (n-1) \bar N_{S[0,1]}(\lambda) + \sum_{i =1}^n \bar N_{R_i U_i}(\lambda) = (n-1)
	\bar N_{[0,1]}(S^2 \lambda) + \sum_{i=1}^n \bar N_{U_i} (R_i^2 \lambda),
$$
where $U_i$ are i.i.d.\ copies of $U$.
	
Now recall that the eigenvalues of $-\Delta$ for the unit interval $[0,1]$ are $(n\pi)^2$. Therefore,
$$
	\bar N_{[0,1]}(\lambda) = \pi^{-1} \lambda^{1/2} - \lfloor \pi^{-1} \lambda^{1/2} \rfloor,
$$
which is bounded by $1 \wedge ( \pi^{-1} \lambda^{1/2})$.
	
To use the general branching process, set
$$
	\phi(t) = (n-1) \bar N_{[0,1]}(S^{2} e^{2t}) \quad \text{and} \quad Z^\phi(t) = \bar N_U(e^{2t})
$$
so that
$$
	Z^\phi(t) = \phi_\emptyset(t) + \sum_{i=1}^n Z_i^\phi(t- \sigma_i),
$$
where $Z_i^\phi$ are i.i.d.\ copies of $Z^\phi$ and $Z^\phi$ is the counting process of the characteristic $\phi$. Furthermore,
notice that
	\begin{equation}\label{eq::technicalBounds1}
	0 \leq \phi(t) \leq (n-1) e^{t} \bone_{t < 0} + c_1 \bone_{t \geq 0} \quad \text{and} \quad Z^\phi(t) \bone_{t < 0} \leq c_2 e^{t}
	 \bone_{t < 0},
	\end{equation}
	for some positive constants $c_1$ and $c_2$.
	
	To establish the first statement of the theorem, we use \eqref{eq::oneSidedToTwoSided} and set
	$$
	\chi(t) = \phi(t) \bone_{t\geq 0} + \sum_{i =1}^n Z_i^\phi(t- \sigma_i) \bone_{0 \leq t < \sigma_i},
	$$
	which is bounded thanks to \eqref{eq::technicalBounds1}.

 Thanks to Theorem \ref{thm::NermanSLLN}, this implies that
	$$
	e^{-\gamma t} Z^\chi(t) \to \mu_1^{-1} \int_0^\infty e^{-\gamma s} \bE \chi(s) ds \in (0, \infty), \text{ a.s.\ and in } L^1,
	$$
	as $t \to \infty$. By definition, this means that
	$$
	\lambda^{-\gamma/2} \bar N_{U}(\lambda) \to \mu_1^{-1} \int_0^\infty e^{-\gamma s} \bE \chi(s) ds,  \text{ a.s.\ and in } L^1,
	$$
	as $\lambda \to \infty$, as required.
	
	Let us now prove the second part of the theorem under Assumption \ref{ass::convergenceRate}. Consider $\bar \zeta^\phi$ and $\bar \zeta^\chi$ defined as in \eqref{eq::defCentring} using $\phi$ and $\chi$. Notice that
	$$
	\bar Z^\chi : = Z^{\bar \zeta^\chi} (t) \quad \text{and} \quad \bar Z^\phi : = Z^{\bar \zeta^\phi} (t)
	$$
	are equal for $t \geq 0$. In particular, the corresponding variance functions $v^\chi(t)$ and $v^\phi(t)$ are equal for $t \geq 0$. Furthermore, applying the fact that $(\bar\zeta^\phi_x)_x$ is i.i.d.\ together with the bounds from Lemma~\ref{lem::convergenceRate}, it follows that there are positive constants $c,\tau=(2\beta_1-\gamma)\wedge \gamma$ such that
	$$
	r^\phi(t) = e^{-\gamma t} \bE |\bar \zeta^\phi(t)|^2 \leq ce^{-\tau|t|}.
	$$
	These observations and the renewal theorem of \cite{LV1996} imply that
	\begin{equation}\label{eq::finiteLimitingVariance}
	\lim_{t \to \infty} v^\chi(t) = \lim_{t \to \infty} v^\phi(t) = \mu_1^{-1} \int_0^\infty e^{-\gamma t} \bE \bar \zeta^\phi(s)^2 ds  = v(\infty)\in (0, \infty),
	\end{equation}
	say.

	Since $\chi$ is bounded and
	\begin{equation}\label{zchi}
	|z^\chi(t) - z^\chi(\infty)| \leq c_2 e^{-\beta_1 t}
\end{equation}
by Assumption \ref{ass::convergenceRate} and Lemma \ref{lem::convergenceGtoZ}, the conditions of Theorem \ref{thm::CLT} are satisfied. This implies that
$$ e^{\gamma t/2} (e^{-\gamma t} Z^\chi(t) - \mathfrak{N}) \to N(0, v(\infty)), \text{ in distribution},	$$
as $t \to \infty$. Using the definition of $Z^\chi$, the decomposition in \eqref{eq::splitApplicationCLTDirichlet},
(\ref{zchi}) and Slutsky's lemma completes the proof.
\end{proof}

\begin{remark} \label{rmk::sharpSpeedConvergence}	{\rm
The arguments in the proof can be used to show that there cannot be a central limit theorem when the rate of convergence in the
renewal theorem is not fast enough. Suppose that
\[  z^{\phi}(t)- z^{\phi}(\infty) = c_1 e^{-\beta_1 t} + o(e^{-\beta_1 t}), \]
as $t \to \infty$, for some real constant $c_1$. Then notice that the centring $\bar \zeta^\phi$ introduced in \eqref{eq::defCentring} and used in the proof of Theorem \ref{thm::spectralAsympString} satisfies
	\begin{align*}
	\bar \zeta_\emptyset^\phi(t) &= \phi_\emptyset(t) + \sum_{i = 1}^n e^{\gamma(t- \sigma_i)} [z^\phi(t- \sigma_i) - z^\phi(\infty) + z^\phi(\infty) - z^\phi(t)]\\
	& = \phi_\emptyset(t) + c_1 e^{(\gamma- \beta_1) t} \left( \sum_{i = 1}^n e^{-(\gamma - \beta_1) \sigma_i} - e^{-\gamma\sigma_i} \right) + o(e^{(\gamma- \beta_1) t})\\
	& = \phi_\emptyset(t) + c_1 e^{(\gamma- \beta_1) t} R + o(e^{(\gamma- \beta_1) t}),
	\end{align*}
	say, as $t \to \infty$ (where the remainder is deterministic). Notice that $R$ is a strictly positive random variable. In particular, for some $\epsilon_0 \in (0, \infty)$, we have $\bP(R \geq \epsilon_0) > 0$. Therefore, there exists $t_0$ such that, for $t \geq t_0$,
	$$
	|\bar \zeta^\phi_\emptyset(t)| ^2 \geq |\bar \zeta_\emptyset^\phi(t)|^2 \bone_{R \geq \epsilon_0} \geq \frac 12 c_1^2 \epsilon_0^2 e^{2(\gamma - \beta_1) t}\bone_{R \geq \epsilon_0}.
	$$
	This implies that, for $t\geq t_0$,
	$$
	r^\phi(t) = e^{-\gamma t} \bE |\bar \zeta_\emptyset^\phi(t)|^2 \geq \frac12 c_1^2 \epsilon_0^2 e^{(\gamma - 2 \beta_1)t}\bP(R \geq \epsilon_0).
	$$
	If $\beta_1 \in (0, \gamma/2]$, then, by renewal theory, we cannot have a finite limiting variance in \eqref{eq::finiteLimitingVariance} and in particular, no central limit theorem.
	}
\end{remark}

\section{Dirichlet distributions}\label{exs}

We now consider the case where the distribution on the simplex is the Dirichlet distribution and perform some explicit calculations
to show the range of behaviour that is possible in our set up. In this set up the birth times are distributed as
$$
(e^{-\gamma \sigma_1}, \dots, e^{-\gamma \sigma_{n}}) \sim \Dirichlet(\alpha_1, \dots, \alpha_n),
$$
where $\Dirichlet$ denotes the Dirichlet distribution. We will describe this situation by saying that the $\Delta_n$-general branching
process has Dirichlet weights $\bfal = (\alpha_1, \dots, \alpha_n)$ and write
$$
\alpha_0 = \alpha_1 + \cdots + \alpha_n,
$$
as usual.

\subsection{Explicit calculations with Dirichlet weights}

In general, it is difficult to determine the solutions to $f(w) = 1$ needed to use Theorem \ref{thm::stoneExpConvergenceRate}.
In some cases with Dirichlet weights which we discuss now, however, we can use the properties of the Gamma function to study
$f$ and deduce convergence rates for the renewal theorem.

Let $X \sim \Dirichlet(\bfal)$ be a random vector in $\R^n$. It is well-known that, for every $i$,
$$
X_i \sim \Betadistr(\alpha_i, \alpha_0 - \alpha_i),
$$
where $\Betadistr$ denotes the Beta distribution. Recall that if $Y \sim \Betadistr(\beta_1, \beta_2)$ then
$$
\bE Y^\theta = \frac{\Beta(\theta + \beta_1, \beta_2)}{\Beta(\beta_1, \beta_2)},
$$
where $\Beta$ is the Beta function, i.e.\
$$
\Beta (x,y) = \frac{\Gamma(x) \Gamma(y)}{\Gamma(x + y)}.
$$
Therefore, we get that
\begin{equation}\label{phidef}
\psi(\theta) = \bE \left[\sum_{i = 1}^n X_i^\theta \right] = \frac{\Gamma(\alpha_0)}{\Gamma(\alpha_0 + \theta)} \sum_{i=1}^n \frac{\Gamma(\alpha_i + \theta)}{\Gamma(\alpha_i)},
\end{equation}
where the equation defines $\psi$. For the general branching process with Dirichlet weights defined above, it follows that
\begin{equation}\label{eq::defFourierTansform}
f(w) = \int_{-\infty}^\infty e^{ws} \nu_\gamma(ds) = \int_{-\infty}^\infty e^{(w - \gamma) s} \nu (ds) = \bE \left[ \sum_{i = 1}^n e^{-\gamma \sigma_i (1 - w/\gamma)} \right] = \psi(1- w/\gamma).
\end{equation}

If $\alpha_0 - \alpha_i \in \Z$ for every $i$, we can use that $\Gamma(w+1) = w \Gamma(w)$ to reduce the function $\psi$ to a rational function which may be simpler to analyse. Notice that this assumption implies in particular that there exist some $a \in \R$ and  $\ell_i \in \Z$ such that, for every $i$, we have $\alpha_i = a + \ell_i$. Therefore,
$$
\alpha_0 = a + \ell_0 = \alpha_1 + \cdots + \alpha_n = n a + \ell_1 + \cdots + \ell_n,
$$
from which it follows that $(n-1) a \in \Z$.

By definition of $G$ in \eqref{eq:defnG} and writing $F=\nu_{\gamma}$, we have
\begin{align*}
G * F(t) 	& = H*F(t) - \mu_1^{-1} \int_0^\infty (t-s) F(ds)\\
			& = H(t) - \bone_{t \geq 0} - \mu_1^{-1} t + 1\\
			& = G(t) + \bone_{t < 0}.
\end{align*}
Denoting by $g$ the Fourier transform of $G$ and $f$ that of $F=\nu_{\gamma}$, we thus get that
$$
g(w) = \frac{1}{1-f(w)}, \quad w \in \C.
$$
Applying \eqref{phidef} and \eqref{eq::defFourierTansform}, for the general branching process with Dirichlet weights $\bfal$ satisfying $\alpha_0 - \alpha_i \in \Z$ for every $i$, the function $f$ can be written
$$
f(w) = \psi(1- w/\gamma) = \sum_{i = 1}^n \frac{1}{P_i(w)},
$$
where $P_i$ is a polynomial of degree $\alpha_0 - \alpha_i$. It follows that
\begin{align*}
g(w) & = \frac{\prod_{i = 1}^n P_i (w) }{\prod_{i = 1}^n P_i (w) - \sum_{i = 1}^n \prod_{ j \neq i} P_j (w) } \\
& = 1 + \frac{\sum_{i = 1}^n \prod_{ j \neq i} P_j (w) }{\prod_{i = 1}^n P_i (w) - \sum_{i = 1}^n \prod_{ j \neq i} P_j (w) }\\
& = 1 + \frac{R(w)}{Q(w)},
\end{align*}
say. It is easy to see that
$$
\deg R < (n-1) \alpha_0 = \deg Q.
$$
Now, decompose $g$ into partial fractions and write
$$
g(w) = 1 + \sum_{i = 1}^q \frac{Q_i(w)}{(w- \rho_i)^{m_i}},
$$
where $(\rho_i, i \leq q)$ are the roots of $Q$ with corresponding multiplicities $m_i$ and $Q_i$ are polynomials with $\deg Q_i < m_i$ for every $i$; in particular,
$$
m_1 + \dots + m_q = (n-1) \alpha_0.
$$

Recall that, for $k \in \Z_+$ and $\Re(\lambda-r)<0$,
$$
\int_{-\infty}^\infty e^{\lambda t} t^k e^{-rt} \bone_{t \geq 0} dt = \int_{0}^\infty t^k e^{(\lambda -r) t} dt = \frac{ k!}{(r-\lambda)^{k+1}}
$$
and therefore that
\begin{equation}\label{eq::polynomialCoefficients}
\int_0^\infty e^{\lambda t} \frac{d^k}{dt^k}\left( t^k e^{-rt} \right) dt = \frac{ k! \lambda^k}{(r-\lambda )^{k+1}}.
\end{equation}
Using this, it is easy to check that
$$
G(dt) = \delta_0(t) dt + \sum_{\Re \rho_i \leq 0} \tilde Q_i(t) e^{\rho_i t} \bone_{t < 0} dt + \sum_{\Re \rho_i >0} \tilde Q_i(t) e^{-\rho_i t} \bone_{t \geq 0} dt,
$$
where the $\tilde Q_i$ are polynomials determined using \eqref{eq::polynomialCoefficients} and satisfying $\deg \tilde Q_i <m_i$. Of course, since $F$ is supported on $[0, \infty)$, so is $H$ and therefore, by definition of $G$, we have
$$
G(t) \bone_{t < 0} = - \mu_1^{-1} t \bone_{t < 0}.
$$
Putting this together shows that
\begin{equation}\label{eq::fromOfG}
G(dt) = \delta_0(t) dt - \mu_1^{-1} \bone_{t < 0} dt + \sum_{\Re \rho_i > 0} \tilde Q_i(t) e^{-\rho_i t} \bone_{t \geq 0} dt,
\end{equation}
which we can integrate to study the asymptotics of $G$. A particular example which will guide us below is given in the following lemma.

\begin{lemma}\label{lem:simpleroots}
	Assume that all the roots of $Q$ are simple. Then,
	$$
	G(t) = - \mu_1^{-1} \bone_{t < 0} t + \frac{\mu_2}{2 \mu_1^2} \bone_{t \geq 0} + \sum_{\Re \rho_i > 0 } \frac{c_i}{\rho_i} e^{- \rho_i t} \bone_{t \geq 0},
	$$
	where $c_i = \Res(g; \rho_i)$, the residue of $g$ at $\rho_i$.
\end{lemma}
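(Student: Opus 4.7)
The plan is to specialise the measure representation \eqref{eq::fromOfG} to the simple-roots case and integrate. Since every root of $Q$ is simple, each multiplicity $m_i$ in the partial fraction expansion of $g$ equals $1$, so each $Q_i$ is a constant polynomial. By the definition of a simple-pole residue, that constant must be $c_i = \Res(g; \rho_i)$.

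Next I would identify the polynomials $\tilde{Q}_i$ appearing in \eqref{eq::fromOfG}. This is read off \eqref{eq::polynomialCoefficients} with $k = 0$: since
\[
\int_0^\infty e^{\lambda t} e^{-\rho_i t}\,dt \;=\; \frac{1}{\rho_i - \lambda} \;=\; -\,\frac{1}{\lambda - \rho_i},
\]
the partial-fraction contribution $\frac{c_i}{w - \rho_i}$ to $g$ corresponds under Fourier inversion to the one-sided density $-c_i e^{-\rho_i t}\bone_{t \geq 0}$. Hence $\tilde{Q}_i \equiv -c_i$, and \eqref{eq::fromOfG} becomes
\[
G(dt) \;=\; \delta_0(t)\,dt \;-\; \mu_1^{-1}\bone_{t<0}\,dt \;-\; \sum_{\Re \rho_i > 0} c_i\, e^{-\rho_i t}\bone_{t \geq 0}\,dt.
\]

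The final step is to integrate this signed measure. For $t < 0$ only the Lebesgue piece contributes, and together with the boundary value $\lim_{s \uparrow 0} G(s) = 0$ (coming from $H \equiv 0$ on $(-\infty, 0)$) this gives $G(t) = -\mu_1^{-1} t$. For $t \geq 0$, the Dirac mass at $0$ yields the jump $G(0^+) = 1$, and integrating the exponential terms from $0$ to $t$ produces
\[
G(t) \;=\; 1 \;-\; \sum_{\Re \rho_i > 0} \frac{c_i}{\rho_i}\bigl(1 - e^{-\rho_i t}\bigr).
\]
To fix the constant of integration, I would appeal to \eqref{eq::secondMomentConvergence}: since each exponential decays as $t \to \infty$, the right-hand side tends to $1 - \sum_i c_i/\rho_i$, which must therefore equal $\mu_2 / (2 \mu_1^2)$. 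Substituting this identity back collapses the expression to the formula stated in the lemma.

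There is no real obstacle here: once the representation \eqref{eq::fromOfG} is in hand, the argument is essentially an exercise in integrating a signed measure. The only subtle points are keeping track of the sign in $\tilde{Q}_i = -c_i$ via the convention of \eqref{eq::polynomialCoefficients}, and using the known limit \eqref{eq::secondMomentConvergence} (rather than trying to evaluate $\sum_i c_i/\rho_i$ directly) to pin down the additive constant.
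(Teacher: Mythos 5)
Your proof is correct and follows essentially the same route as the paper: with simple roots the partial-fraction coefficients are exactly the residues, the measure representation \eqref{eq::fromOfG} is integrated directly (your tracking of the sign $\tilde Q_i = -c_i$ via \eqref{eq::polynomialCoefficients} is accurate), and the constant is pinned down by letting $t\to\infty$ and appealing to \eqref{eq::secondMomentConvergence}. The only difference is that you spell out the intermediate $G(t) = 1 - \sum_i (c_i/\rho_i)(1 - e^{-\rho_i t})$ for $t\geq0$, which the paper leaves implicit.
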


Since in this case none of the singularities can be removed and all have order one, all the $c_i$ are non zero. Furthermore, since if $\rho_i$ is a root with residue $c_i$ then $\bar \rho_i$ is a root with residue $\bar c_i$ as $g(\bar w) = \overline{g(w)}$, roots with the same real part cannot cancel out. In particular, in this case, the result of Theorem \ref{thm::stoneExpConvergenceRate} is sharp.

\begin{proof}
Since all the roots of $Q$ are simple, we have
$$
g(w) = 1 + \sum_{i = 1}^q \frac{c_i}{w- \rho_i},
$$
where we must have $c_i = \Res(g; \rho_i)$. Integrating \eqref{eq::fromOfG} then shows that
$$
G(t) = \bone_{t \geq 0}- \mu_1^{-1} \bone_{t < 0} t - \sum_{\Re \rho_i > 0} \frac{c_i}{\rho_i} \bone_{t \geq 0} + \sum_{\Re \rho_i > 0 } \frac{c_i}{\rho_i} e^{- \rho_i t} \bone_{t \geq 0}.
$$
Since $F$ has a second moment, the result now follows from \eqref{eq::secondMomentConvergence} after letting $t \to \infty$.
\end{proof}

\subsection{Examples} \label{subsec::numericalExample}

Let us first discuss how the observations above enable us to establish the desired rate of convergence for some simple cases of Dirichlet weights.\\[.1in]
\textit{Example 1.}

\begin{lemma}\label{lem::casesWhereWorks}
Consider the general branching processes with Dirichlet weights $\bfal$ described above. Assume that
$$
\alpha_1 = \cdots = \alpha_n = \frac{k}{n-1}, \quad k \in \{1, 2,3,4\}, \quad n\geq 2.
$$
Then the Fourier transform $f(w)$ of $\nu_\gamma$ is analytic and $\neq 1$ when $\Re w \in (0, \gamma]$. In particular,
$$
G(t) - \frac{\mu_2}{2 \mu_1^2} = O(e^{-\gamma t}),
$$
as $t \to \infty$.
\end{lemma}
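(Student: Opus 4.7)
The plan is to reduce the lemma to an explicit root count by exploiting that under the hypothesis $\psi$ becomes a rational function of low degree. With $\alpha_1=\cdots=\alpha_n=\alpha:=k/(n-1)$ we have $\alpha_0-\alpha=k\in\mathbb{Z}_{>0}$, so the identity $\Gamma(\alpha+k+\theta)/\Gamma(\alpha+\theta)=\prod_{j=0}^{k-1}(\alpha+\theta+j)$ turns \eqref{phidef} into
\[ \psi(\theta)=\frac{n\prod_{j=0}^{k-1}(\alpha+j)}{\prod_{j=0}^{k-1}(\alpha+\theta+j)}. \]
By \eqref{eq::defFourierTansform}, $f(w)=\psi(1-w/\gamma)$ is rational in $w$ with poles only at $w=\gamma(\alpha+1+j)>\gamma$ for $j=0,\dots,k-1$, so $f$ is analytic on a neighbourhood of $\{\Re w\leq\gamma\}$. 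Setting $\theta:=1-w/\gamma$, the equation $f(w)=1$ becomes
\[ Q_k(\theta):=\prod_{j=0}^{k-1}(\alpha+\theta+j)-n\prod_{j=0}^{k-1}(\alpha+j)=0, \]
a polynomial of degree $k$; the Malthusian identity $n\alpha=\alpha+k$ gives $Q_k(1)=0$. Since the strip $\Re w\in(0,\gamma]$ corresponds to $\Re\theta\in[0,1)$, it suffices to show that every root of $Q_k$ other than $\theta=1$ satisfies $\Re\theta<0$.

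The four cases would then be handled directly. For $k=1,2$ it is immediate (the equation is linear, resp.\ quadratic in $u:=\alpha+\theta$, giving only $\theta=1$ and $\theta=-2\alpha-2$). For $k=3$, dividing $u(u+1)(u+2)-(\alpha+1)(\alpha+2)(\alpha+3)$ by the known factor $u-\alpha-1$ leaves a quadratic with negative discriminant, producing a complex-conjugate pair with $\Re u=-(\alpha+4)/2<0$. For $k=4$ I would use the palindromic shift $y:=u+3/2$, $\tilde\alpha:=\alpha+5/2$, under which $u(u+1)(u+2)(u+3)=(y^2-\tfrac{9}{4})(y^2-\tfrac{1}{4})=:p(y^2)$, reducing the equation to $p(y^2)=p(\tilde\alpha^2)$; since $p(z_1)=p(z_2)$ iff $z_1=z_2$ or $z_1+z_2=\tfrac{5}{2}$ and $\tilde\alpha^2>\tfrac{5}{2}$, the four roots are read off explicitly as $\theta\in\{1,-2\alpha-4\}$ together with a complex-conjugate pair with $\Re\theta=-3/2-\alpha<0$.

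With the algebra complete, the final step is to invoke Theorem \ref{thm::stoneExpConvergenceRate}, which requires $f\neq 1$ on an open strip $\{0<\Re w<r_1\}$ for some $r_1>\gamma$. Since $|f(w)|\to 0$ uniformly as $|\Im w|\to\infty$ on any bounded vertical strip (the denominator grows like $|w|^k$), any zeros of $f-1$ must lie in a compact subset of the closed strip $\{0\leq\Re w\leq\gamma+\epsilon\}$; isolation of zeros of the analytic function $f-1$ then promotes the closed-strip non-vanishing obtained above to a slightly wider open strip. Stone's theorem will then deliver the claimed $O(e^{-\gamma t})$ decay of $G$. The hard part is the $k=4$ case: a generic quartic is not solvable by elementary substitutions, and it is exactly the palindromic structure of the four consecutive-integer factors that permits the reduction to a quadratic in $y^2$. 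This is the reason the elementary argument does not obviously extend beyond $k=4$.
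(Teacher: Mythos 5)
Your argument matches the paper's in all essentials: rewrite $\psi(\theta)=1$ as the polynomial equation $\prod_{j=0}^{k-1}(\alpha+\theta+j)=\prod_{j=1}^{k}(\alpha+j)$, observe that $\theta=1$ is always a root, verify directly for $k\le 4$ that every other root has $\Re\theta<0$, translate this via $f(\gamma(1-\theta))=\psi(\theta)$ into analyticity and non-vanishing of $f-1$ on a vertical strip wider than $(0,\gamma]$, and conclude via Stone's theorem. Your computed roots agree with those listed in the paper; the only difference is that you spell out the algebra (the cubic factorisation for $k=3$, the palindromic substitution $y=u+3/2$ reducing the $k=4$ quartic to a quadratic in $y^2$), which the paper leaves implicit and merely records the answers. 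Your observation that the quartic is tractable only because of its palindromic structure, and that this is precisely why the elementary argument stops at $k=4$, correctly anticipates the remark following the lemma in the paper.

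One small point of economy: to produce the open strip $\Re w\in(0,r_1)$ with $r_1>\gamma$ required by Stone's theorem, the isolation-of-zeros/compactness step is unnecessary. Since $f-1$ is a ratio of polynomials, it has only the finitely many zeros you have just found, all with $\Re w\geq\gamma(1+\alpha)$ (the $k=2$ case already gives $\Re w=\gamma(3+2\alpha)$, and similarly for the others), and the first pole of $f$ sits at $w=\gamma(1+\alpha)$; so one may simply take $r_1=(1+\alpha)\gamma>\gamma$ outright, as the paper does. Your argument is correct but does more work than is needed at this step.
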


\begin{proof}
Letting $\alpha = k/(n-1)$ a direct calculation gives
\[ \psi(\theta) = \prod_{i=1}^k \frac{\alpha +i}{\theta+\alpha +i-1}, \;\; \theta>-\alpha. \]
There is always a solution to $\psi(\theta)=1$ at $\theta=1$ and all we require is that the other solutions
are less than 0 to establish, via \eqref{eq::defFourierTansform}, that $f(w)$ is analytic and $\neq 1$ on
$\Re w \in (0, (1 +\alpha) \gamma)$.

For $k = 1$, the only solution to $\psi (\theta) = 1$ is $\theta = 1$.

For $k = 2$,  the other solution to $\psi (\theta) = 1$ is given by $\theta=-2(\alpha+1)$.

For $k=3$, the other solutions to $\psi (\theta) = 1$ are
\[ \theta = -\frac{3\alpha+4}{2} \pm \frac12\sqrt{-3\alpha^2-12\alpha-8}. \]
and $k=4$ has solutions to $\psi (\theta) = 1$ at
\[ \theta=-2\alpha -4, -\frac32-\alpha \pm \frac12 \sqrt{-4\alpha^2-20\alpha-15}. \]
Thus the real parts of all the solutions are less than zero and we have the required analyticity.

The rest of the statement follows from Theorem \ref{thm::stoneExpConvergenceRate}.
\end{proof}

Analytic solutions for the solutions to the equation $\psi(\theta)=1$
do not appear to be available for larger values of $k$.\\[.1in]
 \textit{Example 2.}
\vspace*{.1in}

\begin{figure}
	\includegraphics[width = 4 cm]{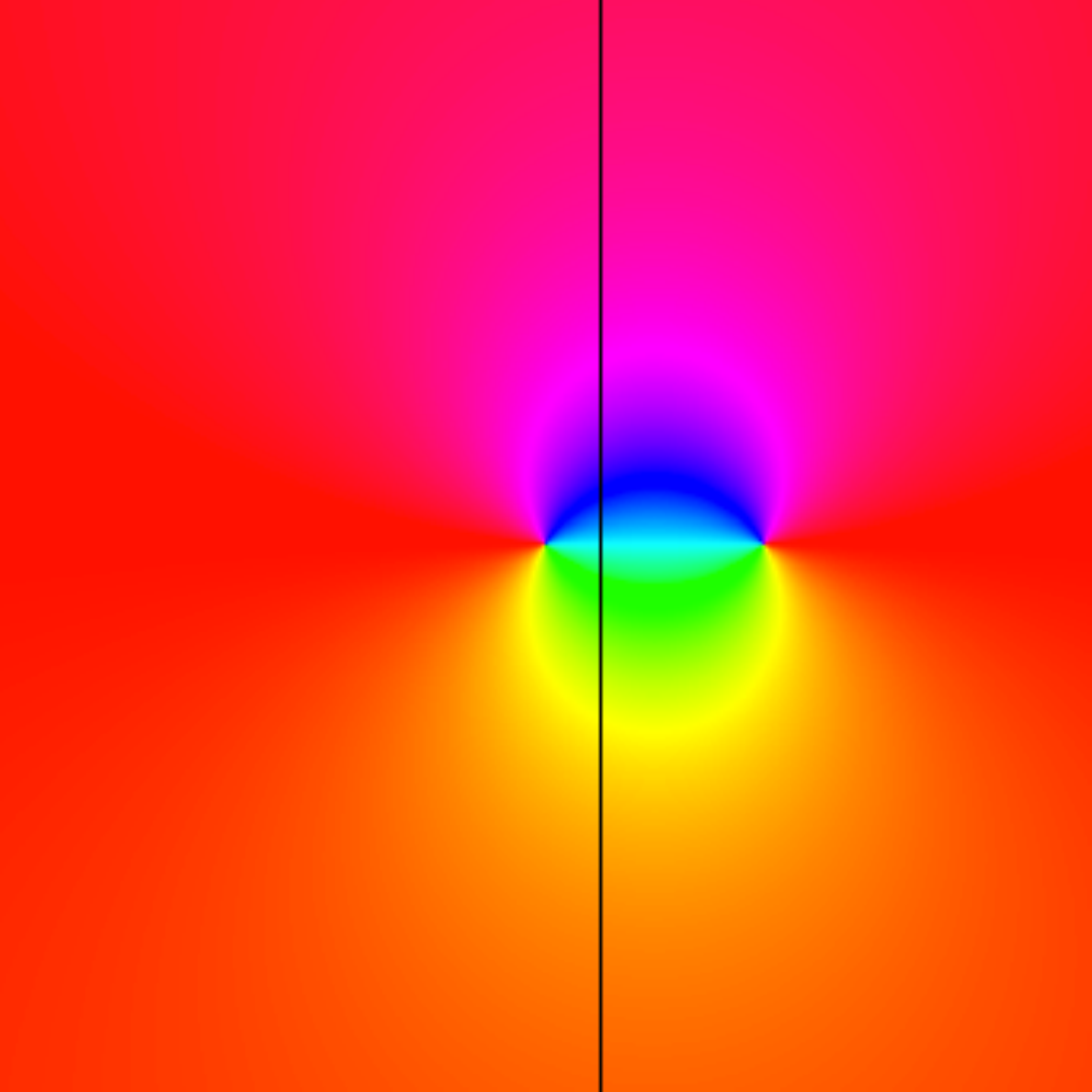}\hspace{1 cm}
	\includegraphics[width = 4 cm]{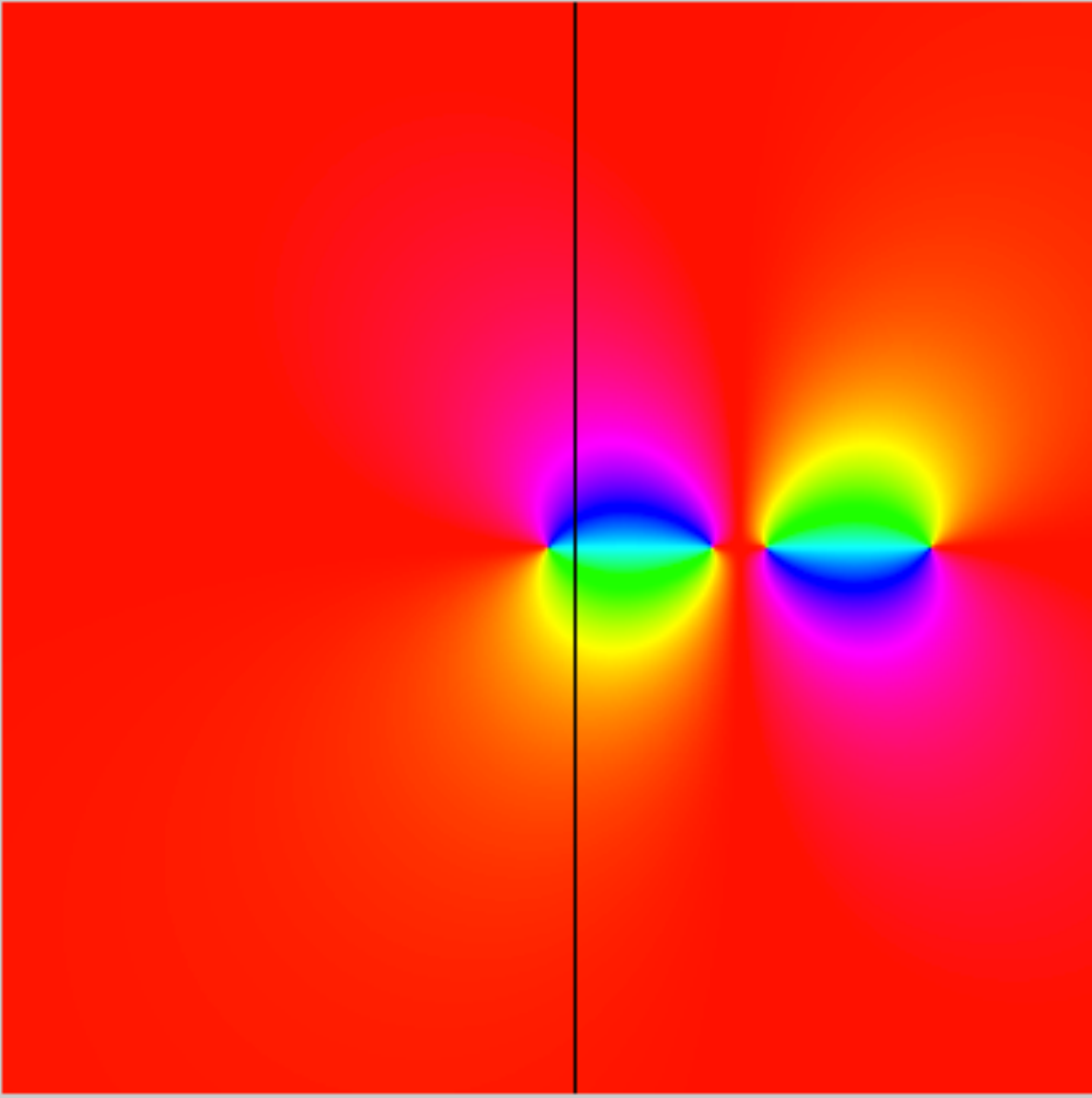}\hspace{1 cm}
	\includegraphics[width = 4 cm]{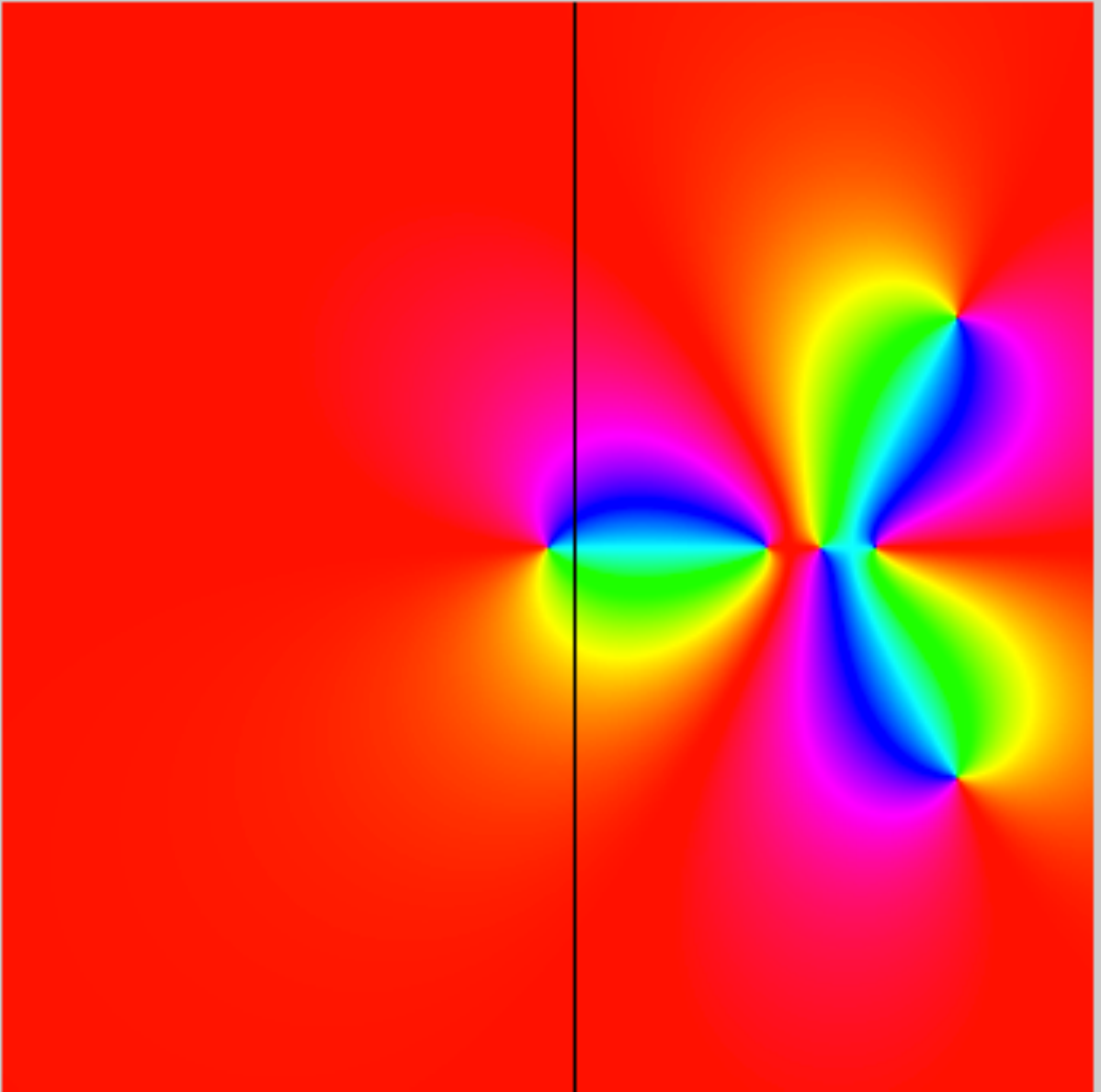}\\\vspace{1 cm}
	
	\includegraphics[width = 4 cm]{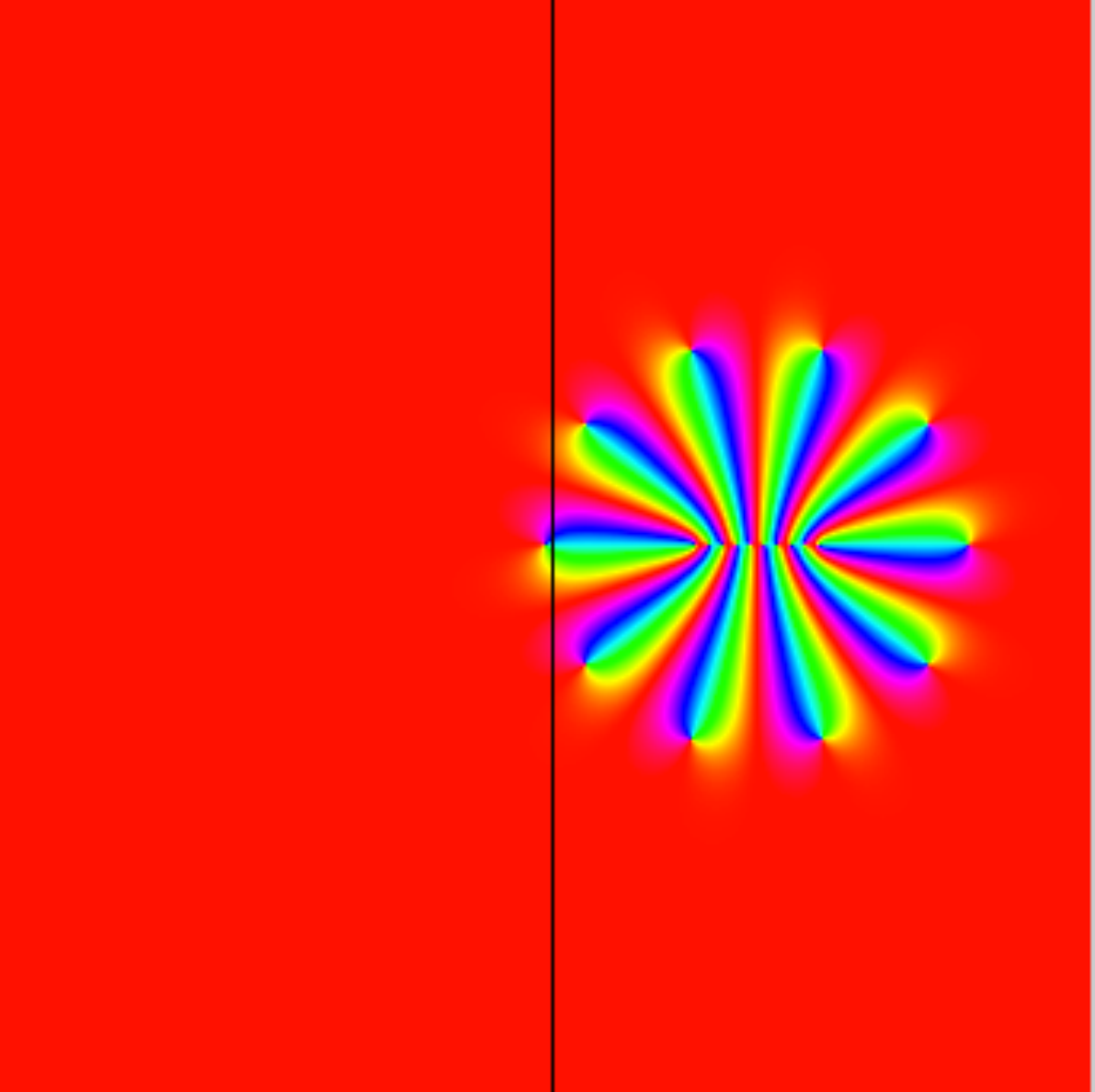}\hspace{1 cm}
	\includegraphics[width = 4 cm]{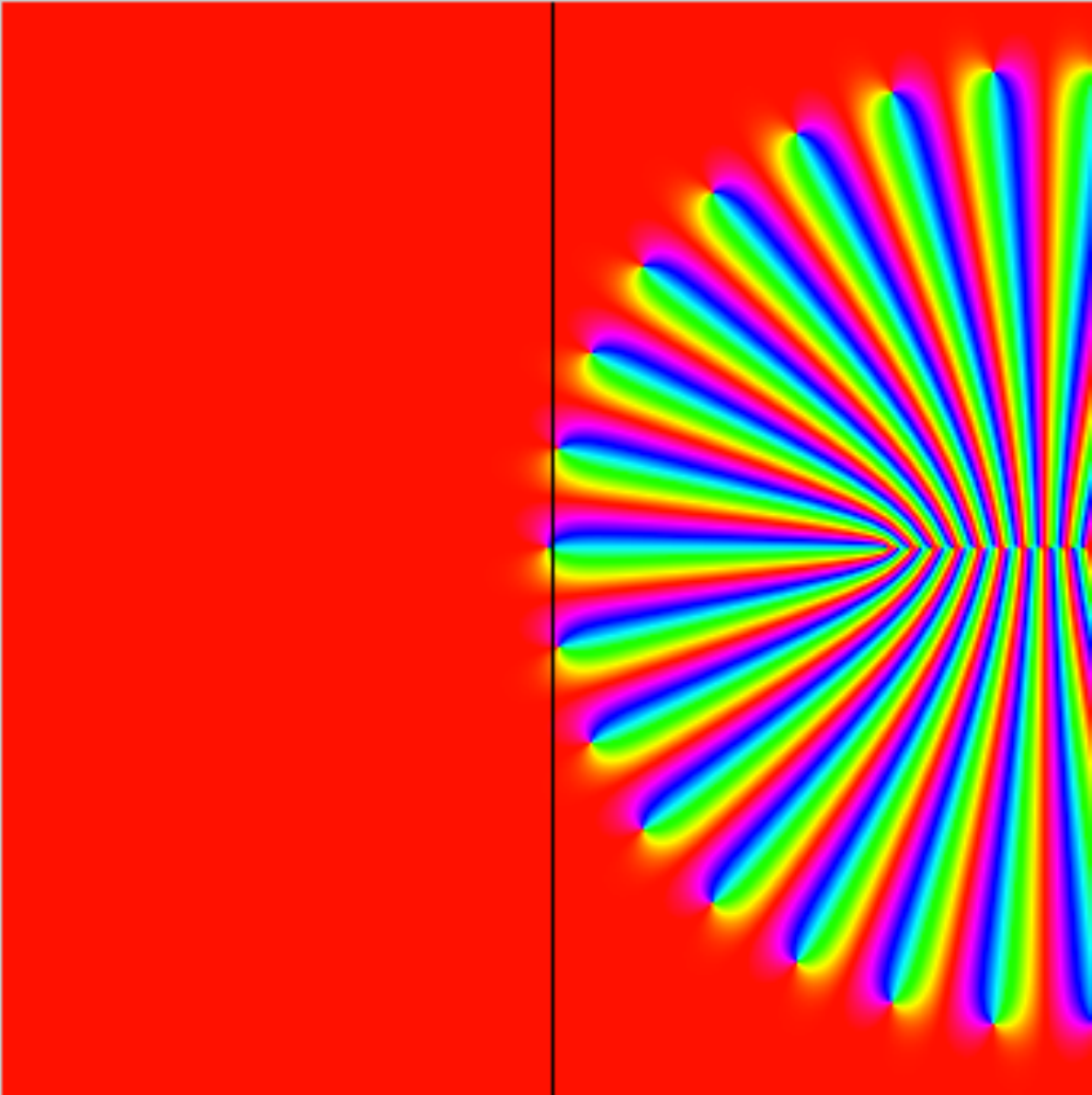}\hspace{1 cm}
	\includegraphics[width = 4 cm]{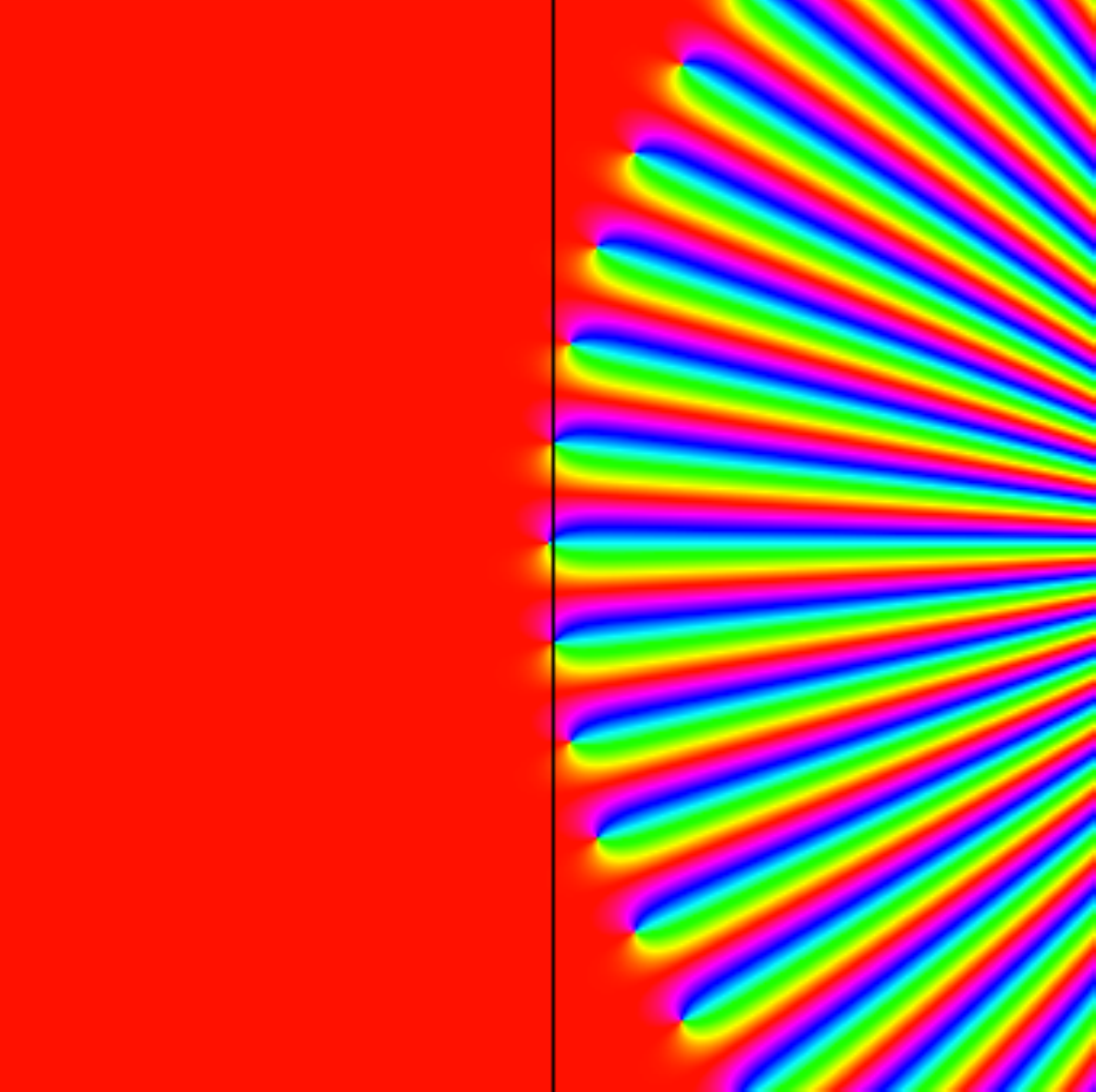}
	\caption{Phase plots of $1- f(\gamma w)$ for $\alpha= 1$, 2, 3, 10, 30 and 60. The black line indicates the set $\{z \in \C: \Re z = 1/2\}$. The regions of the plot are $\{z \in \C: \Re z \text{ and } \Im z \in [-s, s]\}$ for $s = 5$, 10, 10, 40, 50 and 50.}
	\label{fig::somePhasePlots1}
\end{figure}

Here we discuss the general branching process derived from the class of examples mentioned in the Introduction
in which the Dirichlet weights are of the form $\bfal = (\alpha, \alpha)$ with $\alpha \in \N$.
We will establish the Theorem from the Introduction.

Thanks to Lemma \ref{lem::casesWhereWorks}, we know that if $\alpha\leq 4$, as $n=2$, then the Fourier
transform $f$ of $\nu_{\gamma}$ defined in \eqref{eq::defFourierTansform} can be used to show that the rate
of convergence
in the renewal theorem is sufficiently fast for the requirements of Theorem \ref{thm::CLT}. In other words,
the applicability of Theorem \ref{thm::CLT} depends on the regularity of the characteristic $\phi$.

More generally we need to solve the equation
\[ f(\gamma(1-\theta)) = \psi(\theta) = \frac{2\Gamma(2\alpha)\Gamma(\alpha+\theta)}
{\Gamma(\alpha)\Gamma(2\alpha+\theta)} =1. \]
As $\alpha\in \N$ this is a polynomial equation and hence we seek roots of
\[ \prod_{i=0}^{\alpha-1} (\theta+\alpha+i) = \frac{2(2\alpha-1)!}{(\alpha-1)!}. \]
By letting $w=1-\theta$ the rate of convergence in the renewal theorem is given by the root of $1-f(\gamma w)$
with smallest strictly positive real part. We have computed these values numerically.

The numerical evidence shows that when $\alpha$ increases, some roots of $1- f(\gamma w)$ get close to the imaginary axis.
This phenomenon is illustrated in Figure \ref{fig::somePhasePlots1} which contains phase plots of $1-f(\gamma w)$
for different values of $\alpha$; we rescaled for convenience. To highlight this more clearly,
Figure~\ref{fig::somePhasePlots2} contains some close-ups of phase plots showing the absence or presence of
such roots of $1- f(\gamma w)$ for different values of $\alpha$. In particular, when $\alpha = 30$, the two non-zero
roots of $1-f(\gamma w)$ closest to the imaginary axis are
$$
\rho_\pm \simeq 0.9951 \pm 9.1074i;
$$
when $ \alpha = 60$, they are
$$
\rho_\pm \simeq 0.4962 \pm 9.1027i;
$$
and when $ \alpha = 80$, they are
$$
\rho_\pm \simeq 0.3718 + 9.0963i.
$$
We have in fact computed the real part of the relevant root for all values of $\alpha$ from 1 to 80 -- these are plotted
in Figure \ref{fig::somePhasePlots3}. Numerically, this establishes that $\alpha = 60$ is the smallest integer value
for which $1-f(\gamma w)$ has roots with real part $< 1/2$.

Our computations also show that for $1\leq \alpha \leq 80$ the roots of $1-f(\gamma w)$ are all simple and occur
as complex conjugate pairs except for the root at 0.

To summarise, this numerical evidence shows that the general branching process with Dirichlet weights $(\alpha, \alpha)$
admits a central limit theorem of the type described when $\alpha\leq 59$, but not when $60\leq \alpha\leq 80$.
Moreover, the monotonicity of the plot in Figure \ref{fig::somePhasePlots3} suggests that the range for which there is not
a central limit theorem extends to all $\alpha\geq 60$.

We note that we see similar results in the asymmetric case with Dirichlet weights $(\alpha_1,\alpha_2)$, $\alpha_1,
\alpha_2 \in\mathbb{N}$ with $\alpha_2\leq\alpha_1-1$. In this case the polynomial equation becomes
\[\left( \prod_{i=0}^{\alpha_2-1}(\alpha_1+\theta+i)-\frac{(\alpha_1+\alpha_2-1)!}{(\alpha_1-1)!}\right)
\prod_{i=0}^{\alpha_1-\alpha_2-1}(\alpha_2+\theta+i) = \frac{(\alpha_1+\alpha_2-1)!}{(\alpha_2-1)!}. \]
Here is a table showing for a given $\alpha_2$ the values of $\alpha_1$ below which we are in the central
limit theorem regime.

\medskip
\begin{center}
\begin{tabular}{|c|c|c|c|c|c|c|c|c|}
\hline
$\alpha_2$ & 1 & 2 & 3 & 4 & 5 & 6 & 7 & $\alpha_1-1$ \\
\hline
$\alpha_1$ & 26 & 32 & 39 & 45 & 51 & 57 & 64 & 60 \\
\hline
\end{tabular}
\end{center}

\begin{figure}
	\includegraphics[width = 4 cm]{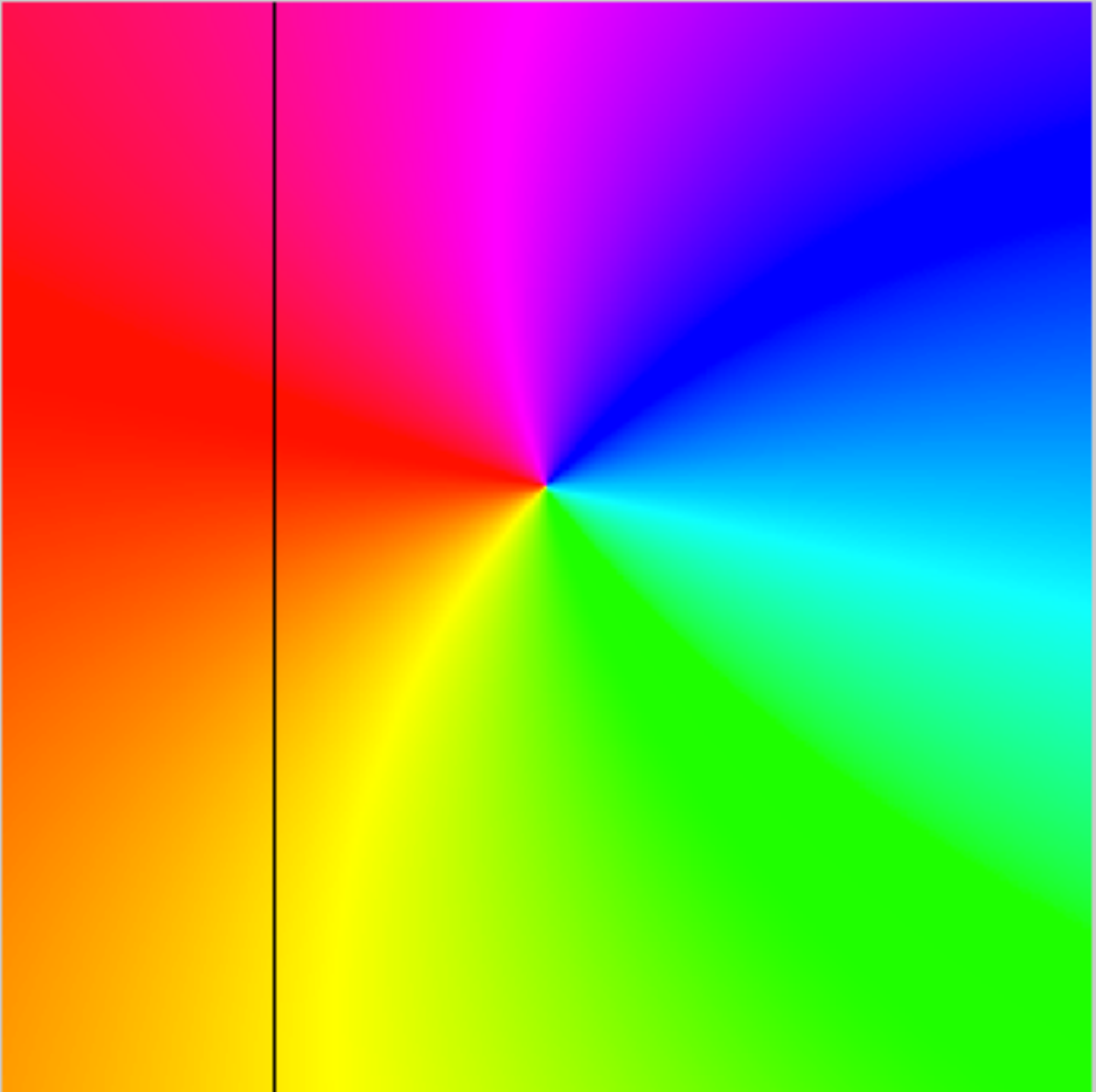}\hspace{1 cm}
	\includegraphics[width = 4 cm]{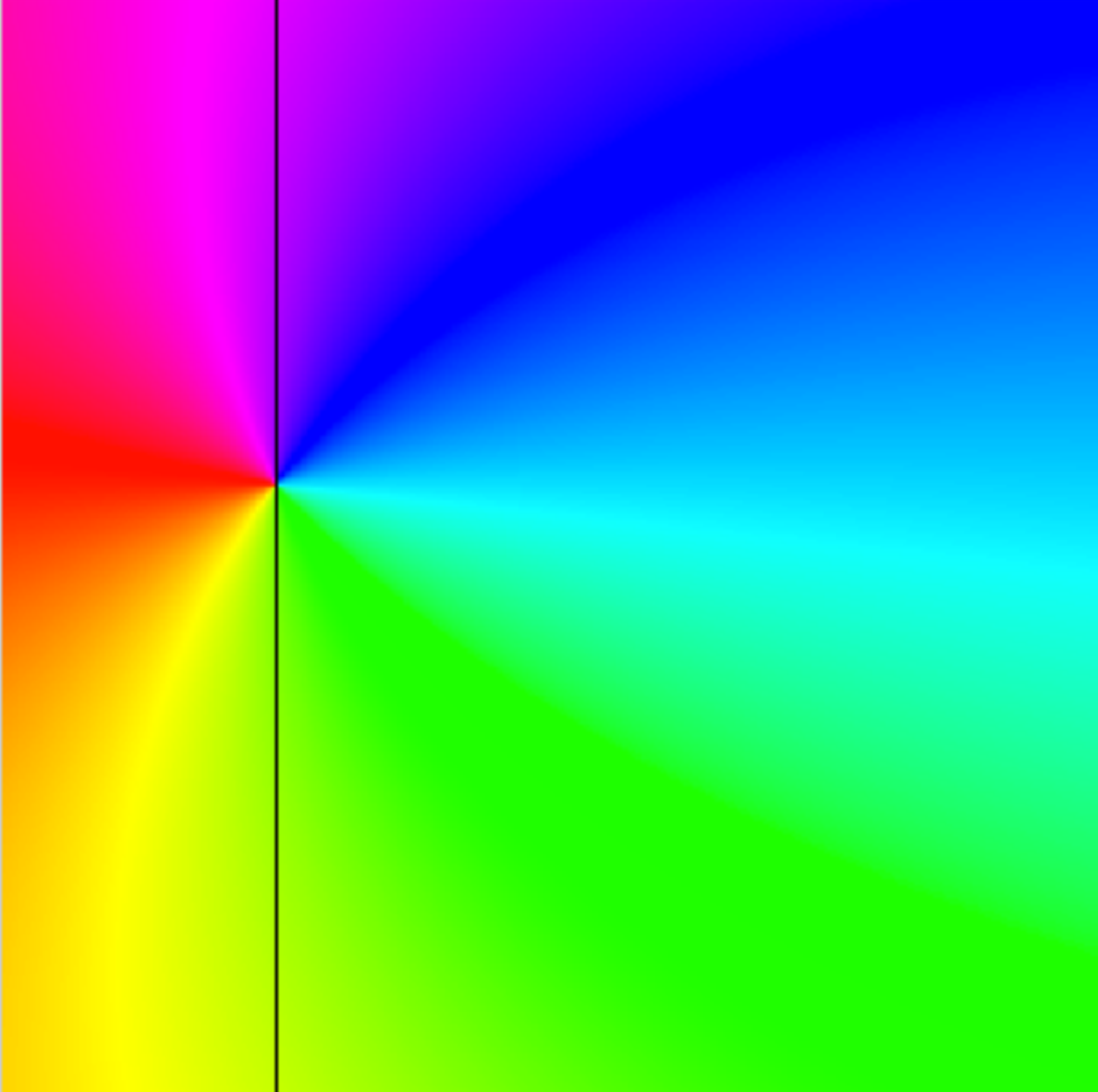}\hspace{1 cm}
	\includegraphics[width = 4 cm]{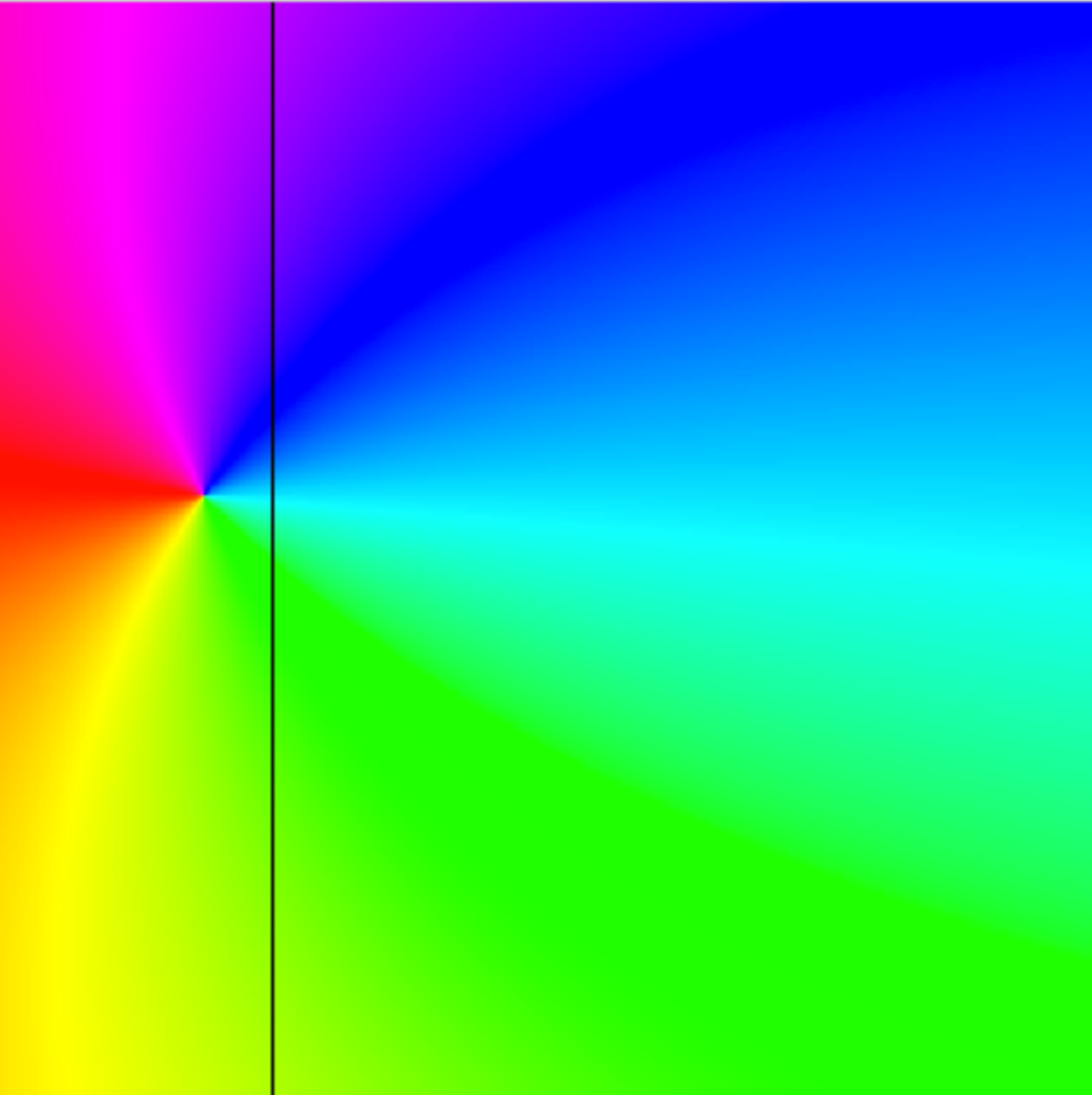}
	\caption{Phase plots of $1- f(\gamma w)$ for $\alpha= 30, 60, 80$. The black line indicates the set $\{z \in \C: \Re z = 1/2\}$. The region of the plot is $\{z \in \C: \Re z \in [0,2] \text{ and } \Im z \in [8, 10]\}$.}
	\label{fig::somePhasePlots2}
\end{figure}

\begin{figure}
	\includegraphics[width = 8 cm]{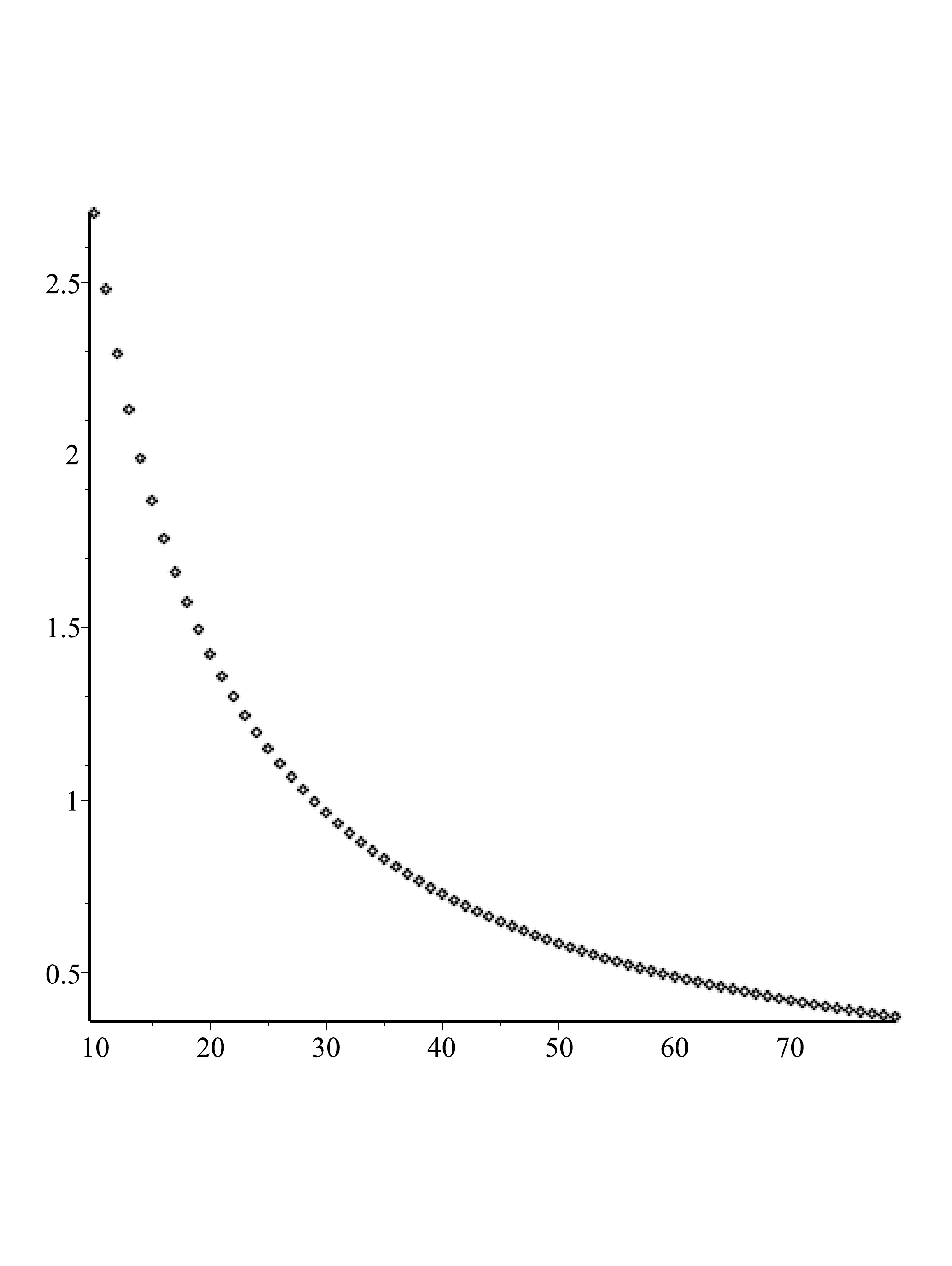}
	\caption{Plot of the real part of the root of $1- f(\gamma w)$ in $\Re z>0$ closest to the imaginary axis against $\alpha$.}
	\label{fig::somePhasePlots3}
\end{figure}

\subsection{Applications to random self-similar strings}

For the range of examples considered in Example 1 of Section~3, thanks to Lemma \ref{lem::casesWhereWorks},
we know that the Cantor set in Figure \ref{fig::cantorString} satisfies Assumption \ref{ass::convergenceRate} and
so, by Theorem \ref{thm::spectralAsympString}, the corresponding Cantor string satisfies a spectral central
limit theorem.

We now return to the second example of Section~3, which was also discussed in the Introduction.
Figure~\ref{fig::someCantorStrings} contains some pictures of statistically self-similar Cantor sets with Dirichlet
weights $(\alpha, \alpha)$ discussed in Subsection \ref{subsec::numericalExample}. The figure illustrates the fact
that the geometry of
the Cantor set becomes more rigid as $\alpha$ increases, because the corresponding Dirichlet distribution becomes
more concentrated.

\begin{proof}[Proof of Theorem~\ref{thm:ex1}]
The numerical evidence discussed in Subsection~\ref{subsec::numericalExample}
shows that Assumption~\ref{ass::convergenceRate} is satisfied for integers $\alpha \leq 59$. Thus, by Theorem~\ref{thm::spectralAsympString}, we have established parts \textit{(i)} and \textit{(ii)}
of the theorem.

For \textit{(iii)}, we start by noting if $S:=1-T^{1/\gamma}-(1-T)^{1/\gamma}$, where $T$ is a $[0,1]$-valued random variable with density
\[\frac{\Gamma(2\alpha)}{\Gamma(\alpha)^2}x^{\alpha-1}(1-x)^{\alpha-1},\]
and $\tilde{S}:=S/\pi$, then the explicit form of $\phi(t)$ yields the following distributional equality:
\[\phi(t)=\tilde{S}e^t-\lfloor \tilde{S}e^t\rfloor.\]
This is clearly bounded above by 1 for all $t\in\mathbb{R}$, and moreover, we recall from (\ref{eq::technicalBounds1}) that $\phi(t)\leq e^t$ for $t\leq0$. Taking expectations, the same is true of $\mathbf{E}\phi(t)$. Such an observation, together with the asymptotic behaviour of the renewal function (as stated at (\ref{renewal})), readily allows us to apply the double-sided renewal theorem of \cite[Theorem 5]{Karlin1955} to deduce that
\[z^\phi(t)=\int_{0}^\infty u^\phi(t-y)H(dy)\rightarrow \mu_1^{-1} \int_{-\infty}^\infty u^\phi(y)dy=:z^\phi(\infty).\]
Thus we can apply Lemma~\ref{lem::convergenceGtoZ} to obtain that
\[ z^\phi(t)-z^\phi(\infty) =\int_0^{\infty} u^\phi(t-y) G(dy)- \frac{1}{\mu_1} \int_0^{\infty} u^\phi(t+y)dy .\]
Using the bounds from \eqref{eq::technicalBounds1} again, it is straightforward to see that the second term is of
order $e^{-\gamma t}$. We now examine the first term. Using \eqref{eq::fromOfG}, we see
\begin{eqnarray*}
\int_{0}^{\infty} u^\phi(t-y) G(dy) & = & \int_{0}^t e^{-\gamma(t-y)} \mathbf{E}\phi(t-y) G(dy) \\
& = & \sum_{\Re \rho_i>0} \int_0^t e^{-\gamma(t-y)} \mathbf{E}\phi(t-y) \tilde{Q}_i(y) e^{-\rho_i y} dy.\\
\end{eqnarray*}
Define $\beta_1:=\gamma^{-1} \min_{\Re \rho_i>0}\Re \rho_i$, which by our numerical study in Example 2 of
Section~\ref{subsec::numericalExample} we know satisfies $\beta_1\in (0,1/2)$ (for $60 \leq\alpha \leq 80$). Then
\begin{eqnarray*}
{\left|\sum_{\Re \rho_i>\beta_1} \int_0^t e^{-\gamma(t-y)} \mathbf{E}\phi(t-y) \tilde{Q}_i(y) e^{-\rho_i y} dy\right|}
&\leq & c_1\sum_{\Re \rho_i>\beta_1}(1+t^{m_i-1}) e^{-\gamma t}\int_0^t e^{(\gamma-\Re {\rho_i})y}dy\\
&\leq & c_2\sum_{\Re \rho_i>\beta_1}(1+t^{m_i-1}) e^{-\gamma t} \left(1+e^{(\gamma-\Re {\rho_i})t}\right)\\
&=& o(e^{-\beta_1 t}).
\end{eqnarray*}
Without loss of generality we label the remaining pair of terms with $\rho_\pm=\gamma(\beta_1\pm i \beta_2)$, and we have that,
as all the roots are simple and come in conjugate pairs (again, for $60 \leq\alpha \leq 80$), by the remarks after Lemma~\ref{lem:simpleroots},
$\tilde{Q}_{\pm}(y)=ce^{\pm i \tilde{c}}$ for some $c,\tilde{c}$ with $c>0$. Hence
\begin{eqnarray*}
\sum_{\Re \rho_i=\beta_1} \int_0^t e^{-\gamma(t-y)} \mathbf{E}\phi(t-y) \tilde{Q}_i(y) e^{-\rho y} dy
&=&\sum_{\pm} c e^{\pm i \tilde{c}} e^{-\gamma t} \int_0^t e^{\gamma(1-\rho_{\pm})y} \mathbf{E}\phi(t-y) dy \\
&=& \sum_{\pm} c e^{\pm i \tilde{c}} e^{-\rho_{\pm}t} \int_0^t e^{-\gamma(1-\rho_{\pm})y} \mathbf{E}\phi(y) dy.
\end{eqnarray*}
As $1-\beta_1>0$ and $\phi(y)$ is a bounded function, the integrals in the above expression converge, as $t\to\infty$, to complex constants $R e^{\pm i\theta}:=\int_0^\infty e^{-\gamma(1-\rho_{\pm})y} \mathbf{E}\phi(y) dy$. It follows that
\[ z^\phi(t)-z^\phi(\infty) = 2R c \cos(\gamma \beta_2 t-\theta-\tilde{c})e^{-\gamma \beta_1 t}+ o(e^{-\gamma \beta_1 t}).\]
Now, if we suppose that $R > 0$, then the reasoning in Remark~\ref{rmk::sharpSpeedConvergence} indicates that the Cantor string does not satisfy a spectral central limit theorem for values of $\alpha\in\{60,\dots,80\}$ (recall that we have checked numerically that $\beta_1<\gamma/2$ and also $c>0$ for $\alpha$ in this range). Moreover, splitting the process as in \eqref{eq::splitApplicationCLTDirichlet} but without scaling, then taking expectations,
we can write
\begin{eqnarray*}
 \mathbf{E} Z^{\phi}(t) &=& e^{\gamma t} z^{\phi}(\infty) + e^{\gamma t}\left(z^{\phi}(t)-z^{\phi}(\infty)\right) \\
 & = & e^{\gamma t} z^{\phi}(\infty) +
 2Rc \cos(\gamma\beta_2 t-\tilde{\theta})e^{\gamma(1-\beta_1) t}+ o(e^{\gamma(1-\beta_1)t}).
\end{eqnarray*}
Rewriting in terms of the counting function we have the result for the mean counting function with $\eta(\alpha) = 1-\beta_1$
the required root of the polynomial appearing in the Theorem.

Thus to complete the proof of (iii) it remains to check that $R>0$. We will do this numerically for $\alpha\in\{60,\dots,80\}$. First, observe that for $a\in\mathbb{C}$ with $\mathfrak{R}a\in(0,1)$,
\begin{eqnarray*}
I&:=& \int_{-\infty}^\infty e^{-at}\mathbf{E}\phi(t) dt\\
&=&\mathbf{E}\int_{-\infty}^\infty e^{-at}\left(\tilde{S}e^t-\lfloor \tilde{S}e^t\rfloor\right) dt\\
&=&\mathbf{E}\sum_{n=0}^\infty\int_{\ln(n/\tilde{S})}^{\ln((n+1)/\tilde{S})} e^{-at}\left(\tilde{S}e^t-n\right)dt\\
&=&\mathbf{E}\tilde{S}^a\left(\frac{1}{1-a}+\sum_{n=1}^\infty{n^{1-a}}\left(\frac{(1+n^{-1})^{1-a}}{1-a}+\frac{(1+n^{-1})^{-a}}{a}-\frac{1}{a(1-a)}\right)\right)\\
&=&\mathbf{E}\tilde{S}^a\left(\sum_{n=0}^\infty a_n\right),
\end{eqnarray*}
where $a_0:=(1-a)^{-1}$ and, for $n\geq 1$,
\[a_n:=\frac{n^{1-a}}{a(1-a)}\left((1+n^{-1})^{-a}(1+an^{-1})-1\right).\]
Some elementary complex analysis yields
\[\left|(1+n^{-1})^{-a}-1+an^{-1}-\frac{a(a+1)}{2}n^{-2}\right|\leq 16 Mn^{-3},\qquad \forall n\geq 4,\]
where
\[M:=\max_{|z|=\frac12}|(1+z)^{-a}|\leq 2^{\mathfrak{R}a}e^{\frac{\pi}6|\mathfrak{I}a|}.\]
Hence, if $n\geq 4$, then
\begin{eqnarray*}
\lefteqn{\left|a_n-\frac{n^{-1-\mathfrak{R}a}}{2}\right|}\\
&\leq& \frac{n^{1-\mathfrak{R}a}}{|a(1-a)|}\left(\left|(1-an^{-1}+\frac{a(a+1)}{2}n^{-2})(1+an^{-1})-1-\frac{a(1-a)}{2n^2}\right|+
16Mn^{-3}|1+an^{-1}|\right)\\
&=&\frac{n^{1-\mathfrak{R}a}}{|a(1-a)|}\left(\left|\frac{a^2(a+1)}{2n^3}\right|+
16Mn^{-3}|1+an^{-1}|\right)\\
&\leq &{n^{-2-\mathfrak{R}a}}f(a),
\end{eqnarray*}
where
\[f(a):=\frac{1}{|a(1-a)|}\left(\frac{|a^2(a+1)|}2+ 2^{4+\mathfrak{R}a}e^{\frac{\pi}6|\mathfrak{I}a|}|1+a|\right).\]
Now,
\[\int_{0}^\infty e^{-at}\mathbf{E}\phi(t) dt=I-\int_{-\infty}^0e^{-at}\mathbf{E}\phi(t) dt=I-\int_{-\infty}^0e^{-at}\mathbf{E}\tilde{S}e^t dt=I-a_0\mathbf{E}\tilde{S}.\]
So, setting $a=\gamma(1-\rho_{\pm})$, we obtain that for $N\geq 3$,
\begin{eqnarray*}
\lefteqn{\left|Re^{\pm i\theta}-\mathbf{E}\tilde{S}^{a}\left(\sum_{n=1}^N a_n
+\tfrac12 \zeta(1+a)-\tfrac12 \sum_{n=1}^N n^{-(1+a)}\right)-a_0\left(\mathbf{E}\tilde{S}^{a}-\mathbf{E}\tilde{S}\right)\right|}\\
&\leq& \mathbf{E}\tilde{S}^{\mathfrak{R}a}\sum_{n=N+1}^\infty \left|a_n-\frac{n^{-1-\mathfrak{R}a}}{2}\right|\leq
\mathbf{E}\tilde{S}^{\mathfrak{R}a}\sum_{n=N+1}^\infty
{n^{-2-\mathfrak{R}a}}f(a)\leq
\mathbf{E}\tilde{S}^{\mathfrak{R}a}\frac{N^{-1-\mathfrak{R}a}f(a)}{1+\mathfrak{R}a},
\end{eqnarray*}
where $\zeta(x)=\sum_{n=1}^\infty n^{-x}$ is the usual zeta function. In particular, the above inequality allows us to compute an estimate for $Re^{\pm i\theta}$ whose error is no greater than the upper bound. For values of $\alpha\in\{60,\dots,80\}$ and $\gamma=\tfrac12$, our computations establish that $R>0$, as desired. For example, with this choice of $\gamma$, we find that if $\alpha=60$, then $R\simeq0.09703$, and if $\alpha=80$, then $R\simeq0.1056$. Note that values of $\rho_\pm$ and $R$ for all values of $\alpha\in\{60,\dots,80\}$ are presented in the Appendix below.
\end{proof}

\begin{figure}
\includegraphics[width = 4 cm]{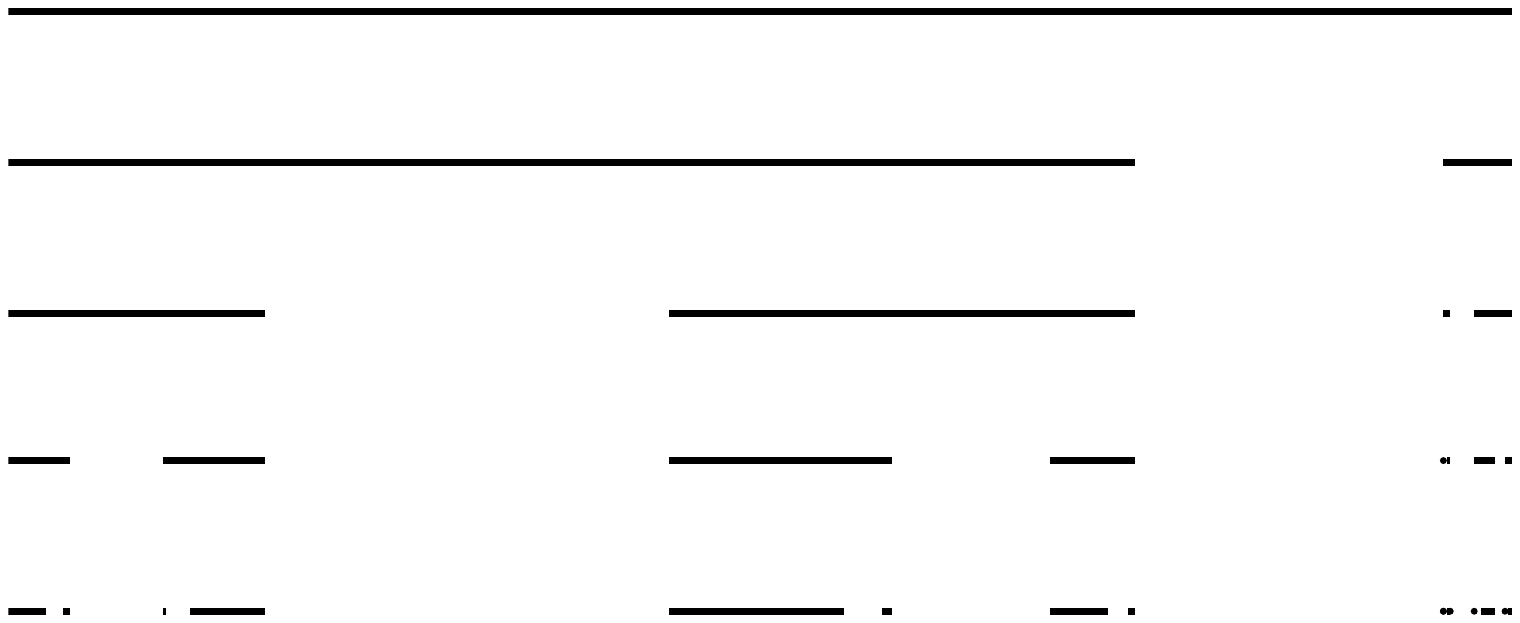}\hspace{1 cm}
\includegraphics[width = 4 cm]{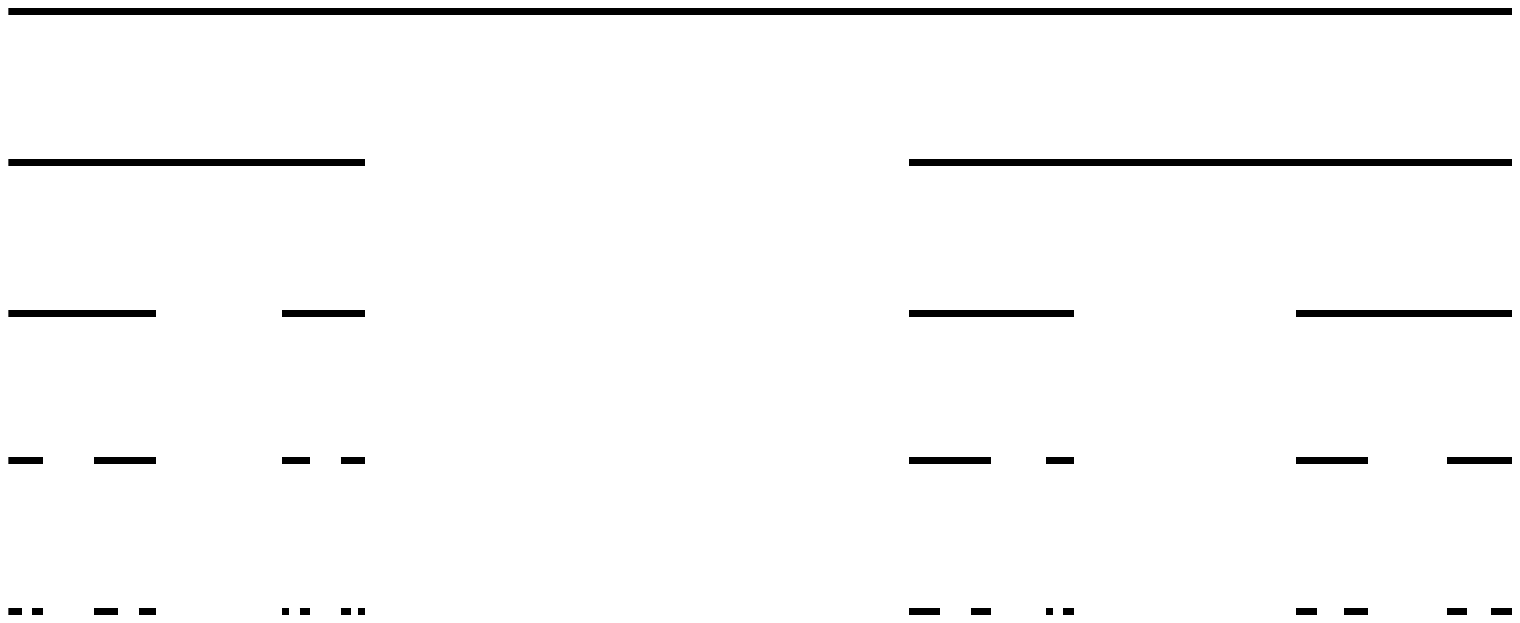}\hspace{1 cm}
\includegraphics[width = 4 cm]{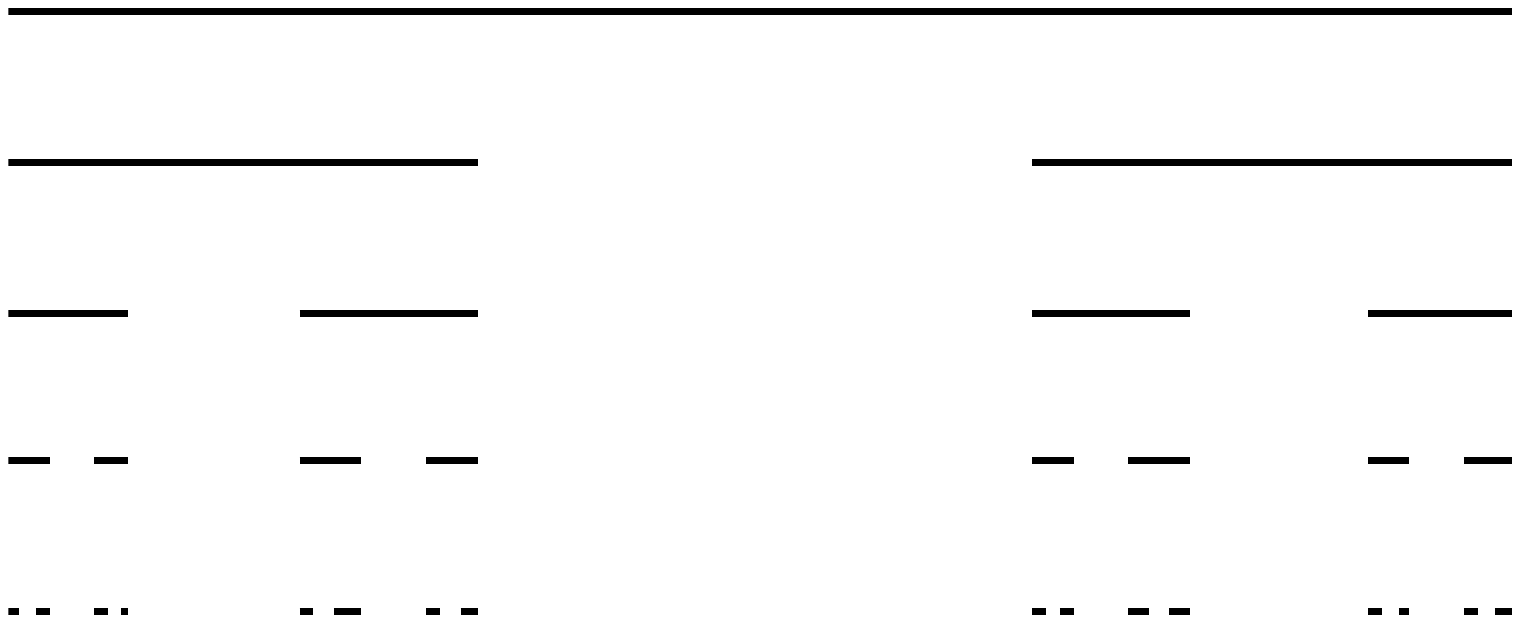}
\caption{Statistically self-similar Cantor strings for the distribution $\Dirichlet(\alpha, \alpha)$, with
$\alpha = 1$, 30 and 80 and $\gamma = 0.6$.}
\label{fig::someCantorStrings}	
\end{figure}

\section{Spectral central limit theorem for the Brownian CRT}

\subsection{Brownian CRT definition and main result}

Building on the investigations into the spectrum of the Brownian continuum random tree (CRT) undertaken in \cite{CH2008, CH2010}, in this section we apply Theorem \ref{thm::CLT} to deduce a central limit theorem for the Brownian CRT's eigenvalue counting function. The starting point for doing this is the characterisation of the Brownian CRT as a random self-similar fractal tree with $\Dirichlet(1/2, 1/2, 1/2)$ weights. (This was shown in \cite{CH2008} using a decomposition first derived in \cite{Aldous5}.)

\begin{figure}
\includegraphics[width = 10cm]{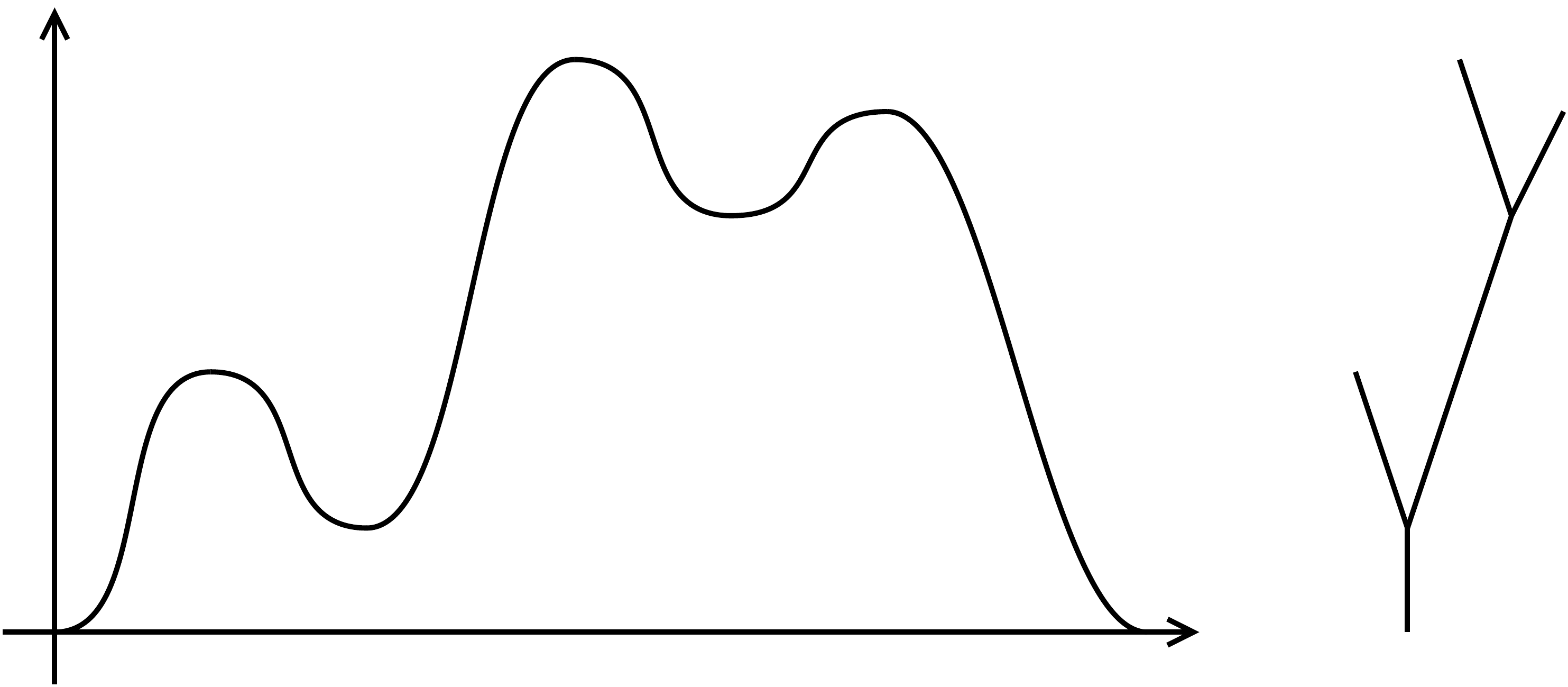}
\caption{An excursion and associated real tree.}
\label{fig::treeexc}
\end{figure}

To introduce the Brownian CRT precisely, it will be most convenient to use the now well-known connection between real trees and excursions. In particular, a function $f$ is said to be an excursion of length $\ell\in(0,\infty)$ if it belongs to $C(\mathbb{R}_+,\mathbb{R}_+)$ and also satisfies $f(x)>0$ if and only if $x\in(0,\ell)$. Given such a function, define a distance on $[0,\ell]$ by setting $d_f(x,y):=f(x)+f(y)-2\inf\{f(r):r\in[x\wedge y,x\vee y]\}$, and let $\sim_f$ be the equivalence relation arrived at by supposing $x\sim_f y$ if and only if $d_f(x,y)=0$. Subsequently, if $\mathcal{T}_f:=[0,\ell]/\sim_f$ and $d_{\mathcal{T}_f}$ is the corresponding quotient metric, it is possible to check that $(\mathcal{T}_f,d_{\mathcal{T}_f})$ is a real tree (see \cite[Definition 2.1]{LegallDuquesne} for the definition of a real tree, \cite[Theorem 2.1]{LegallDuquesne} for a proof of this fact, and Figure~\ref{fig::treeexc} for a pictorial example). Applying this construction, one may define the Brownian CRT to be the random real tree $\mathcal{T}=(\mathcal{T},d_{\mathcal{T}}):=(\mathcal{T}_{2e},d_{\mathcal{T}_{2e}})$, where $e$ is simply the Brownian excursion normalised to have unit length (see \cite[Corollary 22]{Aldous3}).

For $\mathbf{P}$-a.e.\ realisation of $\mathcal{T}$, it is possible to define naturally an associated measure and Dirichlet form as follows. Firstly, the canonical measure on $\mathcal{T}$, which will be denoted by $\mu_\mathcal{T}$, is obtained by pushing-forward Lebesgue measure on $[0,1]$ by the quotient map onto $\mathcal{T}$. This procedure yields a non-atomic Borel probability measure of full support, $\mathbf{P}$-a.s. Secondly, as a consequence of \cite[Theorem 5.4]{Kigamidendrite}, it is possible to build a local, regular, conservative Dirichlet form
$(\mathcal{E}_\mathcal{T},\mathcal{F}_\mathcal{T})$ on $L^2(\mathcal{T},\mu_\mathcal{T})$, which is related to the metric $d_\mathcal{T}$ through, for every $x\neq y\in \mathcal{T}$,
\[d_\mathcal{T}(x,y)^{-1}=\inf\{\mathcal{E}_\mathcal{T}(f,f):\:f\in\mathcal{F}_\mathcal{T},\:f(x)=0,\:f(y)=1\}.\]
The eigenvalues of the triple $(\mathcal{E}_\mathcal{T},\mathcal{F}_\mathcal{T}, \mu_\mathcal{T})$ are defined to be the numbers $\lambda$ which satisfy
\[\mathcal{E}_\mathcal{T}(f,g)=\lambda\int_\mathcal{T}fgd\mu_\mathcal{T},\hspace{20pt}\forall
g\in\mathcal{F}_\mathcal{T}\]
for some eigenfunction
$f\in\mathcal{F}_\mathcal{T}$. The corresponding eigenvalue counting function,
$N_\mathcal{T}$,  is obtained by setting
\[N_\mathcal{T}(\lambda):=\#\{\mbox{eigenvalues of
}(\mathcal{E}_\mathcal{T},\mathcal{F}_\mathcal{T}, \mu_\mathcal{T})\leq\lambda\},\]
and it is this function that will be of interest here. We note that it was checked in \cite[Section 6]{CH2008} that $N_\mathcal{T}$ is well-defined and finite for any $\lambda\in\mathbb{R}$, $\mathbf{P}$-a.s. Moreover, from \cite[Theorem 2]{CH2008} and \cite[Theorem 1.1 and Remark 1.2]{CH2010}, we know that there exists a deterministic constant $C_0\in (0,\infty)$ such that, as $\lambda\rightarrow\infty$,
\begin{equation}\label{meanconv}
\bE N_\mathcal{T}(\lambda)=C_0\lambda^{2/3}+O(1),
\end{equation}
and also, $\bP$-a.s.,
\begin{equation}\label{asconv}
\lambda^{-2/3} N_\mathcal{T}(\lambda)\rightarrow C_0.
\end{equation}
These establish second order mean behaviour, and first order almost-sure behaviour of the eigenvalue counting function. Here, we further investigate the second order distributional behaviour, applying our central limit theorem to prove the following result in particular.

\begin{theorem}\label{crtclt} There exist constants $C_0\in (0,\infty)$ and $C_1\in [0,\infty)$ such that, as $\lambda\rightarrow\infty$,
\[\frac{N_{\mathcal{T}}(\lambda)-C_0\lambda^{2/3}}{\lambda^{1/3}}\rightarrow N(0,C_1),\]
in distribution.
\end{theorem}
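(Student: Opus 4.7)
The plan is to realise $N_{\mathcal{T}}$ as the counting process of a characteristic in a $\Delta_3$-GBP and then invoke Theorem~\ref{thm::CLT}. The Aldous/Croydon--Hambly decomposition writes $\mathcal{T}=\bigcup_{i=1}^3 \Phi_i(\mathcal{T}_i)$, where the $\mathcal{T}_i$ are i.i.d.\ copies of $\mathcal{T}$ and the $\Phi_i$ are contractions with ratios $R_i=\Delta_i^{1/2}$ for $(\Delta_1,\Delta_2,\Delta_3)\sim\Dirichlet(1/2,1/2,1/2)$. Since the walk dimension of the CRT is $3$, eigenvalues rescale as $N_{\Phi_i(\mathcal{T}_i)}(\lambda)=N_{\mathcal{T}_i}(\Delta_i^{3/2}\lambda)$, so that setting $\sigma_i=-\log R_i=-\tfrac12\log\Delta_i$, $\gamma=2$, and $Z(t):=N_{\mathcal{T}}(e^{3t})$ gives a recursion of the form $Z(t)=\phi_\emptyset(t)+\sum_{i=1}^3 Z_i(t-\sigma_i)$, where $\phi_\emptyset$ encodes the boundary/branch-point correction. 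A Dirichlet--Neumann bracketing argument, as used in \cite{CH2008,CH2010}, shows $\phi_\emptyset$ is uniformly bounded (and effectively supported on $[0,\infty)$ modulo exponentially small errors that can be absorbed into $\chi$ via \eqref{eq::oneSidedToTwoSided}). Note $(e^{-\gamma\sigma_i})_i=(\Delta_i)_i$ lies on the simplex, so we are in the $\Delta_3$-GBP framework with $M_\infty\equiv 1$ and Malthusian parameter $\gamma=2$.

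Next I would compute $\psi(\theta)$ and the Fourier transform $f$. Using \eqref{phidef} with $\alpha_i=1/2$, $\alpha_0=3/2$, a direct calculation with $\Gamma(x+1)=x\Gamma(x)$ gives
\[
\psi(\theta)=\frac{3/2}{\theta+1/2},\qquad f(w)=\psi(1-w/\gamma)=\frac{3}{3-w}.
\]
Hence $1-f(w)$ is analytic on $\{\Re w<3\}$ with its unique zero at $w=0$, so Theorem~\ref{thm::stoneExpConvergenceRate} yields $|G(t)-\mu_2/(2\mu_1^2)|=O(e^{-rt})$ for any $r<3$; in particular we obtain $|z^\phi(t)-z^\phi(\infty)|\leq c_1 e^{-\beta_1 t}$ for some $\beta_1>\gamma/2=1$ via Lemma~\ref{lem::convergenceGtoZ} (the boundedness of $\phi$ and $u^\phi$ makes the two integrals in that lemma decay at rate $\beta_1$).

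With the convergence rate in hand, I would define the centred characteristic $\bar\zeta$ by \eqref{eq::defCentring} and apply Lemma~\ref{lem::convergenceRate} to obtain $|\bar\zeta(t)|\leq |\phi(t)|+6(c_1 e^{(\gamma-\beta_1)t}\wedge c_2 e^{\gamma t})$; since $\gamma-\beta_1<\gamma/2$, Lemma~\ref{lem::checkIntegrabilityCondition} delivers Condition~\ref{cond::integrabilityCondCLT}. The second condition of Theorem~\ref{thm::CLT}, the $(2+\kappa)$-moment bound, I would check via Lemma~\ref{lem::thirdMomentPositiveTimes} with $\kappa=1$: the bound $|\bar\zeta(t)|\leq c_3 e^{\gamma t/2}$ for $t\geq 0$ follows from the previous display since $\gamma-\beta_1<\gamma/2$, and boundedness of $v$ is addressed jointly with the next step. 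For the convergence $v(t)\to v(\infty)$, I would apply the renewal equation \eqref{vrenewal} governing $v$, and note that the forcing term $r(t)$ defined in \eqref{vdefrdef} is directly Riemann integrable because one can bound $r(t)\leq c_4 e^{-\tau|t|}$ with $\tau:=(2\beta_1-\gamma)\wedge\gamma>0$ (exactly as in the proof of Theorem~\ref{thm::spectralAsympString}, using $|\bar\zeta(t)|\leq c_5(e^{\gamma t/2-(\beta_1-\gamma/2)t}\wedge e^{\gamma t})$ and expanding the definition of $\rho_\emptyset$). Since $\nu_\gamma$ is non-lattice (the Dirichlet law has no atoms), the two-sided renewal theorem gives both $v(t)\to v(\infty)\in[0,\infty)$ and $\|v\|_\infty<\infty$.

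Having verified the hypotheses of Theorem~\ref{thm::CLT}, the split \eqref{eq::splitApplicationCLTDirichlet} together with $|z^\phi(t)-z^\phi(\infty)|=O(e^{-\beta_1 t})$ and Slutsky yields
\[
e^{-\gamma t/2}\bigl(Z^\phi(t)-e^{\gamma t}z^\phi(\infty)\bigr)\to N(0,v(\infty))
\]
in distribution. Setting $\lambda=e^{3t}$ and $C_0:=z^\phi(\infty)$, $C_1:=v(\infty)$ gives the statement of Theorem~\ref{crtclt}. The main obstacle is the technical verification that the characteristic $\phi$ arising from the branch-point decomposition of $(\mathcal{E}_\mathcal{T},\mathcal{F}_\mathcal{T},\mu_\mathcal{T})$ is genuinely bounded and that the remainder in the Dirichlet--Neumann bracketing is controllable with the quantitative exponential rate required above (rather than merely the $O(1)$ bound \eqref{meanconv} used in the first-order results); once that is in place, the Dirichlet computation makes the renewal-theoretic input very clean. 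As noted in the introduction, we do not resolve whether $C_1=v(\infty)>0$, because the correlation structure inside $\rho_\emptyset$ at \eqref{eq::varianceBranchingProcess} is too intricate to sign directly.
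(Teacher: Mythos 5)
Your overall strategy matches the paper's: realise $N^D_{\mathcal{T}}$ as a characteristic counting process of a $\Delta_3$-GBP with $\Dirichlet(1/2,1/2,1/2)$ weights and apply Theorem~\ref{thm::CLT}. The computation $\psi(\theta)=\tfrac{3}{1+2\theta}$, $f(w)=3/(3-w)$ (in your time parametrisation) is correct and cleanly identifies the pole structure; the paper's bound $|\hat z^{\phi}(t)|\le Ce^{-\gamma t}$ cited from \cite{CH2010} is consistent with your Stone-theorem observation, so the required rate $\beta_1>\gamma/2$ is genuinely available. However there is a real gap in how you then verify the hypotheses of Theorem~\ref{thm::CLT}.

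The issue is that for the CRT the characteristic $\phi_i$ (defined by \eqref{etadef}) depends on the \emph{entire progeny} of $i$, because $N^D_i$ involves the eigenvalues of the whole subtree $\mathcal{T}_i$. Consequently $\bar\zeta_\emptyset$ is not independent of $\bar Z_1,\bar Z_2,\bar Z_3$, so the cross terms in $\rho_\emptyset$ at \eqref{eq::varianceBranchingProcess} do \emph{not} vanish in expectation, and the identity $r(t)=e^{-\gamma t}\bE|\bar\zeta_\emptyset(t)|^2$ used in the proof of Theorem~\ref{thm::spectralAsympString} (and implicitly invoked in your $r(t)\le c_4 e^{-\tau|t|}$ step ``exactly as in the proof of Theorem~\ref{thm::spectralAsympString}'') is false here. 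The paper's actual argument is forced to split the variance and forcing term into pieces $I_1,I_2,I_3$ and $J_1,J_2,J_3$, where $I_3,J_3$ are exactly these non-vanishing cross terms; $J_3$ is bounded by $Cu^{\phi}(t)^{1/2}$ and the argument then appeals to the integrability of $(u^{\phi})^{1/2}$ proved in \cite{CH2008}, not to a clean exponential bound on $r$. Similarly, Lemma~\ref{lem::thirdMomentPositiveTimes} assumes $\phi(t)=0$ for $t<0$ and a deterministic bound $|\bar\zeta(t)|\le c_1 e^{\gamma t/2}$; here $\phi$ is two-sided, and for $t<0$ the only available bound is $|\bar\zeta_i(t)|\le Ce^{\gamma t}(1+\delta_i^{\gamma})$ with $\delta_i$ the random diameter of $\mathcal T_i$. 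The paper's Proposition~\ref{moment3} is a substantially more involved argument (including a fourth- and tenth-moment estimate for $Z^{\phi}$ via Lemma~\ref{yrem}) precisely to get around this. In short, the ``$\Delta_n$-GBP package'' of Section~\ref{sec::dirichletWeights} is not plug-and-play here, and the genuine technical obstacle is progeny-dependence of the characteristic, not the $O(1)$-versus-exponential issue in the Dirichlet--Neumann bracketing that you flag at the end.
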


\begin{remark} Unfortunately we are not able to establish that the asymptotic variance $C_1$ is strictly positive, as we were in the corresponding result for fractal strings (Theorem \ref{thm::spectralAsympString}). This is due to the more complicated correlation structure of the relevant characteristics, for which we could not find suitable tools to analyse.
\end{remark}

\subsection{Self-similarity of the Brownian CRT}

As noted above, the key tool in studying the spectrum of the Brownian CRT in \cite{CH2008, CH2010} was a self-similar decomposition. We again take this recursion as our starting point, and proceed in this section to describe this in more detail. We also make the connection with the branching process framework of Section \ref{bpcltsec}.

Let $\rho\in\mathcal{T}$ be the $\sim_e$-equivalence class of $\mathcal{T}$ and $x^{(1)}$, $x^{(2)}$ be two $\mu_\mathcal{T}$-random vertices of $\mathcal{T}$. Since $\mathcal{T}$ is a real tree, there exists a unique branch-point $b^{\mathcal{T}}(\rho,x^{(1)},x^{(2)})\in\mathcal{T}$ of these three vertices. To be more precise, this is the sole element in the set $[[\rho,x^{(1)}]]\cap[[x^{(1)},x^{(2)}]]\cap[[x^{(2)},\rho]]$, where $[[x,y]]$ is the unique injective path from $x$ to $y$ in $\mathcal{T}$. Now, by the non-atomicity of $\mu_\mathcal{T}$, the vertices $\rho,x^{(1)},x^{(2)}$ are distinct almost-surely, and therefore lie in different components of $\mathcal{T}\backslash b^{\mathcal{T}}(\rho,x^{(1)},x^{(2)})$. We will label by $\mathcal{T}_1$, $\mathcal{T}_2$ and $\mathcal{T}_3$ the components containing $\rho$, $x^{(1)}$ and $x^{(2)}$, respectively. Moreover, for $i=1,2,3$, we define a metric $d_{\mathcal{T}_i}$ and probability measure $\mu_{\mathcal{T}_i}$ on
$\mathcal{T}_i$ by setting
$d_{\mathcal{T}_i}:=\Delta^{-1/2}_id_{\mathcal{T}}|_{\mathcal{T}_i\times\mathcal{T}_i}$, $\mu_{\mathcal{T}_i}(\cdot):=\Delta_i^{-1}\mu(\cdot\cap \mathcal{T}_i)$, where $\Delta_i:=\mu_\mathcal{T}(\mathcal{T}_i)$. Note that, since $\mu_\mathcal{T}$ has full-support, $\Delta_i$ is almost-surely non-zero. We also fix $\rho_1=\rho_2=\rho_3=b^{\mathcal{T}}(\rho,x^{(1)},x^{(2)})$, set $x_i^{(1)}=\rho,x^{(1)},x^{(2)}$ for $i=1,2,3$, respectively, and choose $x_i^{(2)}$ to be a $\mu_{\mathcal{T}_i}$-random vertex of $\mathcal{T}_i$ for each $i=1,2,3$. (See Figure \ref{fig::decomp}.) A minor adaptation of \cite[Theorem 2]{Aldous3} using the invariance under re-rooting of the Brownian CRT (see \cite[Section 2.7]{Aldous2}, for example) then yields the following.

\begin{figure}
\includegraphics[width = 12cm]{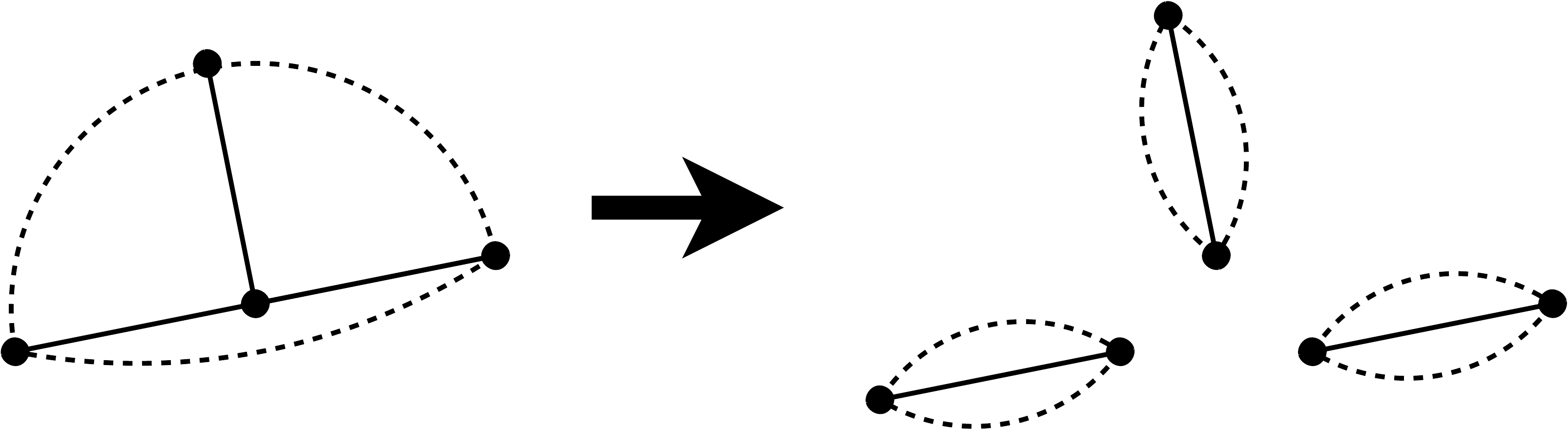}
\rput(-12.3,.3){$\rho$}
\rput(-8,1.1){$x^{(1)}$}
\rput(-10.3,3.2){$x^{(2)}$}
\rput(-8.5,2.7){$\mathcal{T}$}
\rput(-4.4,1.1){$\mathcal{T}_1$}
\rput(-1.2,1.5){$\mathcal{T}_2$}
\rput(-2.3,2.3){$\mathcal{T}_3$}
\rput(-3.2,.6){$\rho_1$}
\rput(-2.4,0.5){$\rho_2$}
\rput(-2.6,1.0){$\rho_3$}
\rput(-5.8,.3){$x^{(1)}_1$}
\rput(.2,1.0){$x^{(1)}_2$}
\rput(-3.5,3.3){$x^{(1)}_3$}
\caption{Self-similar decomposition of the continuum random tree.}
\label{fig::decomp}
\end{figure}

\begin{lemma}\label{split} The collections $(\mathcal{T}_i,d_{\mathcal{T}_i},\mu_{\mathcal{T}_i},\rho_i,x_i^{(1)}, x^{(2)}_i)$, $i=1,2,3$, are independent copies of $(\mathcal{T},d_{\mathcal{T}},\mu_\mathcal{T},\rho,x^{(1)},x^{(2)})$, and moreover, the entire
family of random variables is independent of $(\Delta_i)_{i=1}^3$, which has a $\Dirichlet(\tfrac{1}{2},\tfrac{1}{2},\tfrac{1}{2})$ distribution.
\end{lemma}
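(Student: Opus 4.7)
The plan is to reduce the statement to a combination of Aldous's classical three-point branch-point decomposition of the Brownian CRT together with the invariance of its law under re-rooting at a $\mu_\mathcal{T}$-uniform vertex. Recall (from \cite{Aldous5} and the argument in \cite[Section 2.7]{Aldous2}) that if $y_1,y_2,y_3$ are three independent $\mu_\mathcal{T}$-samples of $\mathcal{T}$ and $\mathcal{T}'_1,\mathcal{T}'_2,\mathcal{T}'_3$ are the components of $\mathcal{T}\setminus b^\mathcal{T}(y_1,y_2,y_3)$ (with $y_i\in\mathcal{T}'_i$ and respective $\mu_\mathcal{T}$-masses $(\Delta'_1,\Delta'_2,\Delta'_3)$), then $(\Delta'_1,\Delta'_2,\Delta'_3)\sim \Dirichlet(1/2,1/2,1/2)$ and, independently of this, the rescaled rooted trees $(\mathcal{T}'_i,(\Delta'_i)^{-1/2}d_\mathcal{T}|_{\mathcal{T}'_i\times\mathcal{T}'_i},(\Delta'_i)^{-1}\mu_\mathcal{T}|_{\mathcal{T}'_i},b^\mathcal{T}(y_1,y_2,y_3),y_i)$, $i=1,2,3$, are independent copies of $(\mathcal{T},d_\mathcal{T},\mu_\mathcal{T},\rho,x^{(1)})$.

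First, I would invoke re-rooting invariance of the Brownian CRT to replace the deterministic root $\rho$ by a $\mu_\mathcal{T}$-uniform point: the law of $(\mathcal{T},d_\mathcal{T},\mu_\mathcal{T},\rho,x^{(1)},x^{(2)})$ coincides with the law of $(\mathcal{T},d_\mathcal{T},\mu_\mathcal{T},y_1,y_2,y_3)$ for three independent $\mu_\mathcal{T}$-uniform marks $y_1,y_2,y_3$, up to a permutation of the distinguished roles that does not affect our decomposition. This step is what allows the trio $(\rho,x^{(1)},x^{(2)})$ to be treated as three exchangeable uniform samples when forming the branch point and the induced subtree structure.

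Next, I would apply Aldous's three-sample decomposition recalled above. This immediately delivers the $\Dirichlet(1/2,1/2,1/2)$ law of $(\Delta_1,\Delta_2,\Delta_3)$, its independence from the rescaled subtrees, and the identification of each rescaled subtree $(\mathcal{T}_i,d_{\mathcal{T}_i},\mu_{\mathcal{T}_i})$ with an independent copy of $(\mathcal{T},d_\mathcal{T},\mu_\mathcal{T})$, the branch point $\rho_i=b^\mathcal{T}(\rho,x^{(1)},x^{(2)})$ playing the role of the root and the unique original mark falling in $\mathcal{T}_i$ (namely $\rho$, $x^{(1)}$ or $x^{(2)}$ according to whether $i=1$, $2$ or $3$) playing the role of $x^{(1)}$. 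The independence of the three tuples and of $(\Delta_1,\Delta_2,\Delta_3)$ is inherited from Aldous's decomposition.

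Finally, the secondary marks $x_i^{(2)}$ are, by definition, independent $\mu_{\mathcal{T}_i}$-uniform samples given $\mathcal{T}_i$, and are drawn independently across $i$. Since in the original tuple $x^{(2)}$ is likewise an independent $\mu_\mathcal{T}$-uniform sample given $(\mathcal{T},d_\mathcal{T},\mu_\mathcal{T},\rho,x^{(1)})$, this extension preserves the distributional identity and independence already established for the first five coordinates, yielding the full lemma. The main technical obstacle is the careful use of re-rooting invariance to treat the branch point $\rho_i$ as a \emph{root} of $\mathcal{T}_i$ (it is not a $\mu_{\mathcal{T}_i}$-uniform sample conditional on $\mathcal{T}_i$, so one must argue this via the joint exchangeability built into Aldous's construction rather than naively), after which everything else is straightforward measure-theoretic bookkeeping.
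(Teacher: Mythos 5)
Your proposal is correct and takes the same route as the paper, which offers no detailed proof beyond the remark that the lemma follows by ``a minor adaptation of \cite[Theorem 2]{Aldous3} using the invariance under re-rooting of the Brownian CRT (see \cite[Section 2.7]{Aldous2}, for example),'' i.e.\ exactly the re-rooting step plus Aldous's three-point branch-point decomposition you spell out, with the secondary marks $x_i^{(2)}$ appended by their very definition as conditionally independent $\mu_{\mathcal{T}_i}$-uniform samples. The only discrepancy is a minor one of citation (the paper points to \cite[Theorem 2]{Aldous3} where you invoke \cite{Aldous5}), and your closing caveat that $\rho_i$ is not conditionally $\mu_{\mathcal{T}_i}$-uniform correctly identifies exactly why the re-rooting step is needed rather than a naive appeal to exchangeability.
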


We will label the objects generated by applying this procedure repeatedly using a subset of the address space of sequences $I$ introduced in Section \ref{bpsubsec}. In particular, for $n\geq 0$, let $\Sigma_n:=\{1,2,3\}^n$ (using the convention that $\{1,2,3\}^0=\{\emptyset\}$), and define $\Sigma:=\cup_{m\geq
0}\Sigma_m$. For $i\in\Sigma_m, j\in\Sigma_n$, we continue to write the convolution $ij=i_1\dots i_m j_1 \dots j_n$. For $k\in\Sigma$, we denote by $|k|$ the unique integer $n$ such that $k\in\Sigma_n$. We will also write for $i\in\Sigma_m$, $i|_n=i_1\dots i_n$ for any $n\leq m$.

Returning to our inductive procedure, given $(\mathcal{T}_i,d_{\mathcal{T}_i},\mu_{\mathcal{T}_i},\rho_i,x_i^{(1)}, x^{(2)}_i)$, where $i\in \Sigma$,
we define $(\mathcal{T}_{ij},d_{\mathcal{T}_{ij}},\mu_{\mathcal{T}_{ij}},\rho_{ij},x_{ij}^{(1)}, x^{(2)}_{ij})$ and $\Delta_{ij}$, $j=1,2,3$, from $(\mathcal{T}_i,d_{\mathcal{T}_i},\mu_{\mathcal{T}_i},\rho_i,x_i^{(1)}, x^{(2)}_i)$ using the same method as
that by which $\mathcal{T}$ was decomposed above. If the  $\sigma$-algebra generated by the random variables
$(\Delta_i)_{1\leq |i|\leq n}$ is denoted  by $\mathcal{F}_n$ for each $n\in\mathbb{N}$, then Lemma \ref{split} readily yields the following corollary. As in \cite{CH2008} and \cite{CH2010}, it is this result that facilitates all that follows.

\begin{corollary} For each $n\in\mathbb{N}$,
$\{(\mathcal{T}_i,d_{\mathcal{T}_i},\mu_{\mathcal{T}_i},\rho_i,x_i^{(1)}, x^{(2)}_i)\}_{i\in  \Sigma_n}$ is an independent collection of copies of
$(\mathcal{T},d_{\mathcal{T}},\mu_{\mathcal{T}},\rho,x^{(1)}, x^{(2)})$, independent of $\mathcal{F}_n$.
\end{corollary}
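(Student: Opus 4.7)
The plan is a straightforward induction on $n$, with Lemma~\ref{split} providing both the base case and the inductive step.

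The base case $n=1$ is precisely Lemma~\ref{split}: the three tuples $(\mathcal{T}_i,d_{\mathcal{T}_i},\mu_{\mathcal{T}_i},\rho_i,x_i^{(1)},x_i^{(2)})$, $i \in \Sigma_1$, are i.i.d.\ copies of $(\mathcal{T},d_{\mathcal{T}},\mu_{\mathcal{T}},\rho,x^{(1)},x^{(2)})$, and this family is jointly independent of $(\Delta_i)_{i=1}^{3}$, which by definition generates $\mathcal{F}_1$.

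For the inductive step, suppose the assertion holds at level $n$, so that $\{(\mathcal{T}_i,\dots)\}_{i \in \Sigma_n}$ is an independent family of copies of $(\mathcal{T},\dots)$, jointly independent of $\mathcal{F}_n$. For each $i \in \Sigma_n$, the level-$(n+1)$ data $\{(\mathcal{T}_{ij},\dots)\}_{j=1,2,3}$ and $(\Delta_{ij})_{j=1,2,3}$ are obtained by applying the fixed decomposition recipe to the input $(\mathcal{T}_i,\dots)$, using an independent fresh sample to select the new vertex $x_{ij}^{(2)}$ in each of the three components. Since the input is distributed as $(\mathcal{T},\dots)$, Lemma~\ref{split} applied to it yields that $\{(\mathcal{T}_{ij},\dots)\}_{j=1,2,3}$ is an i.i.d.\ triple of copies of $(\mathcal{T},\dots)$, jointly independent of the Dirichlet vector $(\Delta_{ij})_{j=1,2,3}$.

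To combine across $i$, one invokes the standard measure-theoretic fact that the same measurable operation applied to jointly independent inputs with independent auxiliary randomness produces jointly independent outputs. Thus the families indexed by different $i \in \Sigma_n$ are themselves jointly independent, and concatenation yields an i.i.d.\ collection of $3^{n+1}$ copies of $(\mathcal{T},\dots)$ indexed by $\Sigma_{n+1}$. The induction hypothesis delivers independence from $\mathcal{F}_n$, while Lemma~\ref{split} applied to each $(\mathcal{T}_i,\dots)$, combined with independence across $i$, delivers independence from $(\Delta_k)_{|k|=n+1}$. Since these two $\sigma$-algebras generate $\mathcal{F}_{n+1}$, the entire collection is independent of $\mathcal{F}_{n+1}$, closing the induction. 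There is no serious obstacle beyond the bookkeeping needed to check that the decomposition together with the auxiliary vertex sample can be viewed as a single measurable operation on the input tuple and an independent external source, so that Lemma~\ref{split} propagates cleanly through the generations.
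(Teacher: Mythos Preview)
Your induction on $n$ using Lemma~\ref{split} for both the base case and the inductive step is correct and is precisely the argument the paper has in mind; indeed, the paper does not spell out a proof but simply states that ``Lemma~\ref{split} readily yields the following corollary.'' Your write-up merely makes explicit the bookkeeping that the paper leaves to the reader.
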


To prove Theorem \ref{crtclt}, we will work with the Dirichlet eigenvalues of $(\mathcal{E}_\mathcal{T},\mathcal{F}_\mathcal{T},\mu_{\mathcal{T}})$. These are defined to be the eigenvalues of the triple $(\mathcal{E}_\mathcal{F}^D,\mathcal{F}_\mathcal{T}^D,\mu_\mathcal{T})$, where $\mathcal{E}_\mathcal{T}^D:=\mathcal{E}|_{\mathcal{F}_\mathcal{T}^D\times\mathcal{F}_\mathcal{T}^D}$ and $\mathcal{F}_\mathcal{T}^D:=\{f\in\mathcal{F}_\mathcal{T}:f(\rho)=f(x^{(1)})=0\}$. Since the corresponding eigenvalue counting function $(N^D_\mathcal{T}(\lambda))_{\lambda\in\mathbb{R}}$ satisfies
\begin{equation}\label{dnbracket}
N^D_\mathcal{T}(\lambda)\leq N_\mathcal{T}(\lambda)\leq N^D_\mathcal{T}(\lambda)+2,\hspace{20pt}\forall \lambda\in\mathbb{R},
\end{equation}
(see \cite[Lemma 19]{CH2008}), the asymptotics of $N^D_\mathcal{T}$ are indistinguishable from those of $N_\mathcal{T}$ at the level at which we are working.

We now make the connection between the eigenvalue counting function $N^D_\mathcal{T}$ on $\mathcal{T}$ and a general branching process. Suppose that, starting from the single individual $\emptyset$, each individual $i$ has three offspring, born at times $-\tfrac32\ln \Delta_{ij}$, $j=1,2,3$, after $i$ was born (so that the entire population can be indexed by the set $\Sigma$). In particular, this implies that an individual $i\in \Sigma$ has birth time $\sigma_i=-\tfrac32\ln D_i$, where $D_\emptyset:=1$ and $D_i:=\Delta_{i|1}\Delta_{i|2}\dots \Delta_{i||i|}$ for $i\in \Sigma\backslash\{\emptyset\}$. For our purposes, we do not need to define lifetimes of individuals explicitly. We do, however, define characteristics $(\phi_i)_{i\in \Sigma}$, via the formula
\begin{equation}\label{etadef}
N_i^D(e^t)=\phi_i(t)+\sum_{j=1}^3N_{ij}^D(e^t\Delta^{3/2}),
\end{equation}
where $N_i^D$ is the Dirichlet eigenvalue counting function on $(\mathcal{T}_i,d_{\mathcal{T}_i},\mu_{\mathcal{T}_i})$. Note that \cite[Lemma 19]{CH2008} implies that $\phi_i(t)\in[0,6]$ for every $t\in\mathbb{R}$, $\mathbf{P}$-a.s. Note also that the random function $\phi_i$ only depends on the progeny of $i$ (including the birth times of the offspring of $i$). Thus, we have a general branching process in the sense of Section \ref{bpsubsec}, and, in the sense of Section~\ref{exs} it has Dirichlet weights. It is easy to check that this process has Malthusian parameter equal to $\gamma=2/3$. Moreover, iterating (\ref{etadef}) (and checking that the remainder term converges to 0) allows one to deduce that the corresponding characteristic counting process
\begin{equation}\label{xchar}
Z^\phi(t)=\sum_{i\in \Sigma}\phi_i(t-\sigma_i)
\end{equation}
satisfies $Z^\phi(t)=N^D_\mathcal{T}(e^t)$ (see the proof of \cite[Lemma 3.5]{CH2010}). As before, the rescaled means
of $Z^\phi$ and $\phi$ will be written
$z^\phi(t):=e^{-\gamma t}\mathbf{E}(Z^\phi(t))$, $u^\phi(t):=e^{-\gamma t}\mathbf{E}(\phi(t))$, where we omit the
index from $\phi$ in the expectation since this is unimportant. Both of the above functions are well-defined and finite for
all $t\in\mathbb{R}$ (see \cite{CH2008}). In fact,
\begin{equation}\label{mdef}
M:=\sup_{t\in\mathbb{R}}z^\phi(t)
\end{equation}
is a finite constant (see \cite[Lemma 20]{CH2008}). Moreover, it was proved as \cite[Proposition 21]{CH2008} that
$z^\phi(t)\rightarrow z^\phi(\infty):={\int_{-\infty}^{\infty}u^\phi(t)dt}\in (0,\infty)$. (The proof that
$z^\phi(\infty)\in (0,\infty)$ was actually not included there, but this is a simple consequence of
\cite[Proposition 1.7]{Croydoncrt} and \cite[Corollary 4]{CH2008}.) We also have that, $\mathbf{P}$-a.s.,
$e^{-\gamma t}Z^\phi(t)\rightarrow z^\phi(\infty)$, see \cite[Proposition 22]{CH2008} -- as in the fractal
strings with Dirichlet weights example, the fundamental martingale is identically equal to one, and so the
limit is deterministic. Note that a simple reparameterisation of the two previous results yields the first order
parts of (\ref{meanconv}) and (\ref{asconv}).

To prove Theorem \ref{crtclt}, we introduce a rescaled centred version of the characteristic counting process. Specifically, as
 before, we set
\[\bar{Z}(t):=Z^{\bar{\zeta}}(t)=Z^\phi(t)-e^{\gamma t}z^\phi(t),\qquad\tilde{Z}(t):=e^{-\gamma t/2}\bar{Z}(t),\]
where $\bar{\zeta}$ is defined as at (\ref{eq::defCentring}). Just as (\ref{etadef}) was fundamental to demonstrating the
first order asymptotic behaviour of $N_\mathcal{T}(t)$ in the arguments of \cite{CH2008}, the recursions at
\eqref{eq::definitionTildeZ} and \eqref{eq::varianceBranchingProcess} are central to our efforts to derive the corresponding
second order behaviour via the branching process result of Theorem \ref{thm::CLT}. We note that the use of an analogous
recursion formula for providing second order bounds was already noticed in \cite{CH2010}. However, that paper was mainly
focused on the infinite variance $\alpha$-stable tree case, and did not obtain the type of detailed results that we do here for
the Brownian CRT.

\subsection{Variance convergence}

In this section, we use the renewal equation of (\ref{vrenewal}) to show that the rescaled variance $v(t):=e^{-\gamma t} \bE (\bar{Z}(t)^2)=\bE (\tilde{Z}(t)^2)$ converges as $t\rightarrow\infty$ to a finite constant. To do this, we are required to check that $v$, $r$ and $\nu_\gamma$ are suitably well-behaved, where $r$ is defined at (\ref{vdefrdef}) and $\nu_\gamma(dt):=\sum_{i=1}^3e^{-\gamma t}\bP(\sigma_i\in dt)$ -- this is the content of the next three lemmas. In the proof of the following result, we recall the function $\psi(x)=3/(1+2x)$ for $x>-{1/2}$, as introduced in \eqref{phidef}.

 \begin{lemma}\label{propsa} The function $v$ is bounded and  measurable, and $v(t)\rightarrow0$ as $t\rightarrow -\infty$.
 \end{lemma}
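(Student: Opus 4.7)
The measurability of $v$ is immediate from the construction, since $\tilde Z(t)$ is measurable in the underlying probability space for each fixed $t$, and $v(t)=\mathbf{E}\tilde Z(t)^2=e^{-\gamma t}\mathrm{Var}(N^D_\mathcal{T}(e^t))$ is defined as an expectation.

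For the convergence $v(t)\to 0$ as $t\to-\infty$, the plan rests on the observation that $Z^\phi(t)=N^D_\mathcal{T}(e^t)$ vanishes almost surely once $e^t$ drops below the smallest Dirichlet eigenvalue $\lambda_1(\mathcal{T})$, which is strictly positive $\mathbf{P}$-a.s. Hence $Z^\phi(t)\to 0$ a.s., and combined with the bound $\mathbf{E}Z^\phi(t)\leq M e^{\gamma t}$ (from \eqref{mdef}), one writes
\[v(t)\leq 2 e^{-\gamma t}\mathbf{E} Z^\phi(t)^2 + 2M^2 e^{\gamma t}.\]
The second term vanishes as $t\to-\infty$. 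For the first term I would use monotonicity of $N^D_\mathcal{T}$ to bound $Z^\phi(t)\leq N^D_\mathcal{T}(1)\mathbf{1}\{\lambda_1(\mathcal{T})\leq e^t\}$ for $t\leq 0$, giving by Cauchy--Schwarz
\[e^{-\gamma t}\mathbf{E} Z^\phi(t)^2\leq e^{-\gamma t}\,\mathbf{E}[N^D_\mathcal{T}(1)^4]^{1/2}\,\mathbf{P}(\lambda_1(\mathcal{T})\leq e^t)^{1/2}.\]
Provided one has sufficient moment control on $N^D_\mathcal{T}(1)$ and sufficient decay of the left tail of $\lambda_1(\mathcal{T})$ (the latter being compatible with the known spectral dimension $2\gamma=4/3$), the right-hand side decays as $t\to-\infty$ and the conclusion follows.

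For the boundedness of $v$, my approach would be to invoke the renewal equation \eqref{vrenewal}, which iterates to $v=r*H$ with $H$ the renewal measure associated to $\nu_\gamma$. First I would show that $r(t)=e^{-\gamma t}\mathbf{E}\rho(t)$ is bounded on $\mathbb{R}$: expand $\rho$ as in \eqref{eq::varianceBranchingProcess}; use the uniform pointwise bound $\phi_i\in[0,6]$ established in \cite[Lemma 19]{CH2008} together with the boundedness $z^\phi\leq M$ to estimate $|\bar\zeta(t)|$ and each $|\bar Z_i(t-\sigma_i)|$; and handle the cross-terms $\bar Z_i\bar Z_j$ for $i\neq j$ using conditional independence of the subtrees given the splitting weights (which forces the conditional cross-expectations to vanish). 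Once $r$ is bounded, standard renewal estimates combined with $\nu_\gamma(\infty)=1$ yield the boundedness of $v$; for negative $t$ one again relies on the vanishing established in the previous paragraph.

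The main obstacle is the $L^2$-control of $Z^\phi(t)$: the first-order result \eqref{meanconv} gives no immediate information on second moments of $N^D_\mathcal{T}$, yet we need the bound $\mathbf{E}(Z^\phi(t))^2=O(e^{2\gamma t})$ uniformly in $t$ together with sufficient decay at $-\infty$. The cleanest route is likely a self-similar renewal argument directly on $\mathbf{E}(Z^\phi(t))^2$, exploiting the decomposition of Lemma~\ref{split} (together with a conditioning-on-the-weights step) to turn it into a renewal-type fixed point problem, paralleling the analysis of $z^\phi$ in \cite{CH2008} but one moment higher; this is also the structural source of the "complicated correlation structure" alluded to in the remark following Theorem~\ref{crtclt}.
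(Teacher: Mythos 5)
Your measurability argument matches the paper's (trivial), but the other two parts have genuine gaps.

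For $v(t)\to 0$ as $t\to-\infty$: your route via $Z^\phi(t)\leq N^D_\mathcal{T}(1)\mathbf{1}\{\lambda_1(\mathcal{T})\leq e^t\}$ and Cauchy--Schwarz requires both a fourth moment bound on $N^D_\mathcal{T}(1)$ and a quantitative left-tail estimate $\mathbf{P}(\lambda_1(\mathcal{T})\leq e^t)=o(e^{2\gamma t})$, neither of which is established or cited, and neither is an obvious consequence of the spectral dimension. The paper instead quotes the bound $\mathbf{E}(Z^\phi(t)^2)\leq Ce^{(2\gamma+\epsilon)t}$ from \cite[Lemma 4.4]{CH2010}, which immediately gives $v(t)\leq Ce^{\gamma t}(e^{\epsilon t}+1)$ and hence both the boundedness on $(-\infty,0]$ and the vanishing at $-\infty$.

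For boundedness: proving it by writing $v=r*H$ and bounding $r$ is circular, or at least incomplete. The identification of $v$ with the convolution solution $r*H$ of the renewal equation is precisely where one needs to know in advance that $v$ is bounded and measurable (this is why the paper proves Lemma~\ref{propsa} first, and only then invokes Karlin's two-sided renewal theorem in Proposition~\ref{varconv}). Moreover, bounded $r$ convolved against the renewal measure $H$, which has infinite total mass, does not yield a bounded function without decay or integrability of $r$; that stronger statement about $r$ is the content of the \emph{next} lemma in the paper, not this one. The paper's actual route for $t\geq 0$ is a direct expansion $v(t)\leq 2e^{\gamma t}(I_1+I_2+I_3)$ obtained by iterating \eqref{eq::varianceBranchingProcess} over all of $\Sigma$ and estimating each $I_i$ by $Ce^{-\gamma t}$; you in fact gesture at this in your final paragraph as "a self-similar renewal argument directly on $\mathbf{E}(Z^\phi(t))^2$," which is the right idea, but you do not carry it out.

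One further point: your claim that conditional independence "forces the conditional cross-expectations to vanish" is true only for $\bar Z_i\bar Z_j$ with $i\neq j$. The cross-term $\bar\zeta_\emptyset(t)\sum_i\bar Z_i(t-\sigma_i)$ does \emph{not} vanish, because $\phi_\emptyset$ (and hence $\bar\zeta_\emptyset$) depends on the whole progeny of $\emptyset$, including the subtrees generating the $\bar Z_i$. This is exactly the correlation the paper controls via the term $I_3$ using the boundedness of $\phi$ and monotonicity, and it is the "more complicated correlation structure" alluded to in the remark after Theorem~\ref{crtclt}.
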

\begin{proof} We start by checking that $v$ is bounded for $t\geq 0$. Similarly to the proof of \cite[Lemma 5.3]{CH2010}, by appealing to \cite[Lemma 5.2]{CH2010}, it is possible to deduce that $v(t)\leq 2e^{\gamma t}(I_1+I_2+I_3)$, where
\begin{eqnarray*}
I_1 &= &\sum_{i\in\Sigma}\bE \left(e^{-2\gamma t} D_i^2(\phi_i(t-\sigma_i)- \bE(\phi_i(t-\sigma_i)|D_i))^2\right), \\
 I_2& =& \sum_{i\in\Sigma}\bE\left(D_i^2\left(\sum_{j=1}^3\Delta_{ij}z^\phi(t-\sigma_{ij})-
\bE(\Delta_{ij}z^\phi(t-\sigma_{ij})|D_i)\right)^2\right), \\
I_3& =& \sum_{i\in\Sigma}\bE \left(e^{-2\gamma t}D_i  \phi_i(t-\sigma_i)\sum_{j=1}^3 \bar{Z}_{ij}(t-\sigma_{ij}) \right).
\end{eqnarray*}

Since $\phi(t)\in[0,6]$, $I_1$ can be bounded as follows:
\begin{equation}\label{i1}
I_1 \leq 6e^{-2\gamma t}\bE \left(\sum_{i\in\Sigma}  \phi_i(t-\sigma_i)\right)= 6 e^{-2\gamma t} \bE(Z^\phi(t))=  6 e^{-\gamma t}z^\phi(t)\leq 6M e^{-\gamma t},
\end{equation}
where the first equality is a consequence of (\ref{xchar}), and $M$ is defined as at (\ref{mdef}).

For $I_2$, first observe that
\begin{equation}\label{hatmstep}
\sum_{j=1}^3\Delta_{ij}z^\phi(t-\sigma_{ij})= \sum_{j=1}^3\Delta_{ij}\hat{z}^{\phi}(t-\sigma_{ij}),
\end{equation}
where $\hat{z}^{\phi}(t):=z^\phi(t)-z^\phi(\infty)$, and the equality holds because $\sum_{j=1}^3 \Delta_j=1$. Now, by results of \cite[Section 3]{CH2010}, we have that $|\hat{z}^{\phi}(t)|\leq Ce^{-\gamma t}$ for $t\in\R$. Thus
\[{D_i\left|\sum_{j=1}^3\Delta_{ij}z^\phi(t-\sigma_{ij})-
\bE(\Delta_{ij}z^\phi(t-\sigma_{ij})|D_i)\right|}\leq Ce^{-\gamma t}\]
for some deterministic constant $C$. In particular, we have proved that
\[I_2\leq Ce^{-\gamma t} \sum_{i\in\Sigma}\bE\left(D_i\left|\sum_{j=1}^3\Delta_{ij}z^\phi(t-\sigma_{ij})-
\bE(\Delta_{ij}z^\phi(t-\sigma_{ij})|D_i)\right|\right).\]
Our next step is to show that the above sum is bounded. Writing $z^\phi(s,t):=z^\phi(s)-z^\phi(t)$, we can proceed similarly to (\ref{hatmstep}) to deduce that
\begin{eqnarray*}
\lefteqn{\bE\left|\sum_{j=1}^3 D_{ij}z^\phi(t-\sigma_{ij})-
\bE(D_{ij}z^\phi(t-\sigma_{ij})|D_i)\right|}\\
&\leq&2\bE\left(\sum_{j=1}^3 D_{ij}z^\phi(t-\sigma_{ij}, t-\sigma_i)\right).
\end{eqnarray*}
From \cite[Section 3]{CH2010}, we have for any $s\leq t$ that $z^\phi(s,t)=u^\phi(s)-u^\phi(t)-\int_s^t u^\phi(w)dw$,
and hence
\begin{eqnarray}
\lefteqn{\sum_{i\in\Sigma}\bE\left|\sum_{j=1}^3 D_{ij}z^\phi(t-\sigma_{ij})-
\bE(D_{ij}z^\phi(t-\sigma_{ij})|D_i)\right|}\nonumber\\
&\leq & 2\sum_{i\in\Sigma}\bE\left(\sum_{j=1}^3 D_{ij} u^\phi(t-\sigma_{i})\right)\label{b1}\\
&&+ 2\sum_{i\in\Sigma}\bE\left(\sum_{j=1}^3 D_{ij} u^\phi(t-\sigma_{ij})\right)\label{b2}\\
&&+  2\sum_{i\in\Sigma}\bE\left(\sum_{j=1}^3 D_{ij} \int_{t-\sigma_{ij}}^{t-\sigma_{i}} u^\phi(w)dw\right).\label{b3}
\end{eqnarray}
To bound these expressions, we will apply the following characterisation of $z^\phi(t)$:
\begin{equation}
z^\phi(t)=e^{-\gamma t} \bE\left(Z^\phi(t)\right)=e^{-\gamma t} \sum_{i\in\Sigma} \bE{(\phi_i(t-\sigma_i))}= \sum_{i\in\Sigma} \bE{(D_iu^\phi(t-\sigma_i))}.\label{mchar}
\end{equation}
Specifically, the term at (\ref{b1}) satisfies
\[2\sum_{i\in\Sigma}\bE\left(\sum_{j=1}^3 D_{ij} u^\phi(t-\sigma_{i})\right)=2\sum_{i\in\Sigma}\bE\left(D_{i} u^\phi(t-\sigma_{i})\right)=2z^\phi(t)\leq 2M.\]
Similarly, the term at (\ref{b2}) is also bounded above by $2M$. Furthermore, the term at (\ref{b3}) can be rewritten as
\[2\sum_{i\in\Sigma}\bE\left(\sum_{j=1}^3 D_{i}\Delta_j' \int_{\gamma^{-1}\ln \Delta_j'}^{0} u^\phi(t+w-\sigma_i)dw\right),\]
where $(\Delta_j')_{j=1}^3$ is a copy of $(\Delta_j)_{j=1}^3$, independent of all the other random variables of the discussion. Applying (\ref{mchar}), this can be evaluated as
\[2\bE\left(\sum_{j=1}^3 \Delta_j' \int_{\gamma^{-1}\ln \Delta_j'}^{0} z^\phi(t+w)dw\right)\leq 3M\bE\left(\sum_{j=1}^3 \Delta_j'|\ln\Delta_j'|\right)<\infty.\]
Putting these pieces together, we obtain that
\begin{equation}\label{i2}
I_2\leq Ce^{-\gamma t}
\end{equation}
for some finite constant $C$.

Finally, note that $I_3$ satisfies
\[I_3 \leq  e^{-2 \gamma t}\sum_{i\in\Sigma}\sum_{j\in\Sigma}\mathbf{E}\left(D_i\phi_i(t-\sigma_i)\phi_{ij}(t-\sigma_{ij})\right),\]
(cf. the proof of \cite[Lemma 5.3]{CH2010}). Again applying (\ref{xchar}), the boundedness of $\phi$ and Lemma \ref{lem::birthTimesSumControl}, it follows that
\begin{eqnarray}
I_3&\leq&  6e^{-2 \gamma t}\sum_{i\in\Sigma} \mathbf{E}\left(D_iZ^\phi_{i}(t-\sigma_{i})\right)\nonumber\\
&=&6e^{-\gamma t}\sum_{i\in\Sigma} \mathbf{E}\left(D_i^2z^\phi(t-\sigma_{i})\right)\nonumber\\
&\leq &6M e^{-\gamma t} \sum_{k=0}^\infty\psi(2)^k\nonumber\\
&=& C  e^{-\gamma t},\label{i3}
\end{eqnarray}
where again $M:=\sup_{t\in\mathbb{R}}z^\phi(t)$, and $C:=6M/(1-\psi(2))$ is a finite constant.

Summing (\ref{i1}), (\ref{i2}) and (\ref{i3}), we obtain that $v$ is bounded for $t\geq 0$. We now check that $v(t)$ is bounded for $t\leq 0$ and converges to $0$ as $t\rightarrow-\infty$. For this, we use the bound $\mathbf{E}(Z^\phi(t)^2)\leq Ce^{(2\gamma +\epsilon)t}$ for $t\in\R$ (cf. \cite[Lemma 4.4]{CH2010}), which implies
\[v(t)\leq  e^{-\gamma t} \left( \bE\left(Z^\phi(t)^2\right)+e^{2\gamma t}z^\phi(t)^2\right)\leq Ce^{\gamma t}\left(e^{\epsilon t}+1\right).\]
Clearly this yields the desired properties of $v(t)$. Finally, to confirm that $v$ is measurable is elementary using the fact that $Z^\phi(t)$ is monotone cadlag, $\mathbf{P}$-a.s.
\end{proof}

\begin{lemma} The function $r$, as defined at (\ref{vdefrdef}), is in $L^1(\mathbb{R})$ and $r(t)\rightarrow0$ as $|t|\rightarrow \infty$.
\end{lemma}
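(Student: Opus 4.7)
The plan is to decompose $r(t) = A(t) + 2B(t) + 2C(t)$ according to the three summands in $\rho_\emptyset$, where
\[ A(t):=e^{-\gamma t}\bE[\bar\zeta_\emptyset^2(t)], \qquad B(t):=e^{-\gamma t}\bE\!\left[\bar\zeta_\emptyset(t)\sum_{i=1}^3\bar Z_i(t-\sigma_i)\right], \]
and $C(t):=e^{-\gamma t}\bE[\sum_{i>j}\bar Z_i(t-\sigma_i)\bar Z_j(t-\sigma_j)]$. I will show that $C\equiv0$ outright, and that both $A$ and $B$ decay exponentially on both tails (and hence lie in $L^1(\mathbb{R})$).

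The vanishing $C\equiv0$ is a direct consequence of the self-similar decomposition: conditional on the Dirichlet weights $(\Delta_k)_{k=1}^3$, the subtrees $\mathcal{T}_1,\mathcal{T}_2,\mathcal{T}_3$ are independent copies of $\mathcal{T}$, so $\bar Z_i$ and $\bar Z_j$ are conditionally independent for $i\neq j$; moreover, $\bE[\bar Z_i(t-\sigma_i)\,|\,\sigma_i]=0$ by construction of the centring $\bar\zeta$, so the cross-correlations vanish after iterating the tower property.

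For $A(t)$, I start from the explicit formula \eqref{eq::defCentring} and use the rate bound $|z^\phi(s)-z^\phi(\infty)|\le c_1\min(e^{-\gamma s},1)$ from \cite[Section 3]{CH2010}, together with the identities $e^{-\gamma\sigma_i}=\Delta_i$ and $\sum_i\Delta_i=1$, to deduce
\[ |\bar\zeta_\emptyset(t)-\phi_\emptyset(t)|\le c_2\min(1,e^{\gamma t}) \quad\text{for every } t\in\mathbb{R}. \]
Combined with the uniform bound $\phi_\emptyset\in[0,6]$ from \cite[Lemma 19]{CH2008}, this yields $|\bar\zeta_\emptyset(t)|\le c_3$ for $t\geq0$, so $A(t)\le c_3^2 e^{-\gamma t}$. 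For $t\le0$, I use the crude bound $\bar\zeta_\emptyset^2\le 2\phi_\emptyset^2+c_4 e^{2\gamma t}$, the pointwise inequality $\phi_\emptyset\le Z^\phi$, and the second-moment estimate $\bE[Z^\phi(t)^2]\le c_5 e^{(2\gamma+\epsilon)t}$ from \cite[Lemma 4.4]{CH2010} (already invoked in Lemma~\ref{propsa}) to obtain $A(t)\le c_6 e^{\gamma t}$. Thus $A$ decays exponentially at both ends, lies in $L^1(\mathbb{R})$, and vanishes at $\pm\infty$.

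For $B(t)$, I apply Cauchy--Schwarz together with the cross-vanishing already established: the mixed products in $(\sum_i \bar Z_i(t-\sigma_i))^2$ disappear, so
\[ \bE\!\left[\textstyle\left(\sum_{i=1}^{3}\bar Z_i(t-\sigma_i)\right)^2\right]=\sum_{i=1}^{3}\bE[\bar Z_i(t-\sigma_i)^2]=e^{\gamma t}(v*\nu_\gamma)(t)\le V_{\max}e^{\gamma t}, \]
where $V_{\max}:=\sup_t v(t)<\infty$ by Lemma~\ref{propsa}. This yields $|B(t)|\le V_{\max}^{1/2}A(t)^{1/2}$, and the exponential decay and $L^1$-integrability of $A$ transfer to $B$, and hence to $r=A+2B$. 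The main obstacle is controlling $A(t)$ as $t\to-\infty$: the uniform bound on $|\bar\zeta|$ only gives $A(t)=O(e^{-\gamma t})$, which blows up; it is essential to invoke the second-moment estimate of \cite[Lemma 4.4]{CH2010} to gain enough decay of $\bE\phi_\emptyset^2(t)$ on the negative axis.
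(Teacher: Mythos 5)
Your proof is correct and arrives at the same conclusion, but it takes a somewhat different route from the paper's. Both proofs split $r$ according to the three summands of $\rho_\emptyset$; you make the vanishing of the genuine cross term $C$ explicit via the conditional independence built into the self-similar decomposition, whereas the paper absorbs it implicitly by passing at once to the bound $|r(t)|\le 2e^{\gamma t}(J_1+J_2+J_3)$ (whose derivation, traced through, uses exactly the same fact plus $(a+b)^2\le2a^2+2b^2$). Where you diverge more substantively is in handling what you call $A$ and $B$. The paper further splits $\bar\zeta_\emptyset$ into $(\phi-\bE\phi)+(\text{centred }\sum_j\Delta_jz^\phi(t-\sigma_j))$, bounds the first piece by $6u^\phi(t)$ using $\phi^2\le6\phi$, and bounds the cross term by $Cu^\phi(t)^{1/2}$ via Cauchy--Schwarz; it therefore needs the integrability of $u^\phi$ and $(u^\phi)^{1/2}$, both imported from \cite[Lemma 20]{CH2008}. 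You instead control $|\bar\zeta_\emptyset-\phi_\emptyset|$ directly using the rate bound $|z^\phi(s)-z^\phi(\infty)|\le c\min(e^{-\gamma s},1)$, and on the negative half-line you appeal to the second-moment estimate $\bE Z^\phi(t)^2\le Ce^{(2\gamma+\epsilon)t}$ together with $\phi_\emptyset\le Z^\phi$; this yields the stronger conclusion that $A$ decays exponentially on \emph{both} tails, so that $A^{1/2}$ (and hence $B$, via Cauchy--Schwarz with the boundedness of $v$) inherits exponential decay as well. One small phrasing caveat: $L^1$-integrability of $A$ alone would not give $A^{1/2}\in L^1(\R)$, so it is the exponential tail bounds on $A$ — not its integrability per se — that make the transfer to $B$ work; your proof does establish those bounds, so the argument is sound, but the phrasing ``the $L^1$-integrability of $A$ transfers to $B$'' slightly misstates the actual mechanism. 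Overall, your version trades the paper's reliance on the $u^\phi$ and $(u^\phi)^{1/2}$ integrability results for a use of the second-moment estimate already invoked in Lemma~\ref{propsa}, and it yields a quantitatively stronger (exponential) decay statement for $r$.
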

\begin{proof} It follows from the definition of $r$ that, similarly to the proof of Lemma \ref{propsa}, we have $|r(t)|\leq 2e^{\gamma t}(J_1+J_2+J_3)$, where
\begin{eqnarray*}
J_1 &=& e^{-2\gamma t} \mathrm{Var}(\phi(t)), \\
J_2 &=& \mathrm{Var}\left(\sum_{j=1}^3\Delta_{j}z^\phi(t-\sigma_{j})\right), \\
J_3 &=& e^{-2\gamma t} \left|\bE  \left( \phi(t)\sum_{j=1}^3  \bar{Z}_{j}(t-\sigma_{j}) \right)\right|,
\end{eqnarray*}
and we will proceed by showing that the statements of the lemma hold for $e^{\gamma t}J_i$, $i=1,2,3$. As in the previous proof, checking the measurability of the functions is elementary, and so we will restrict ourselves to finding suitable bounds for them. Firstly, we have
\[e^{\gamma t}J_1\leq e^{-\gamma t} \bE\left(\phi(t)^2\right)\leq 6  e^{-\gamma t} \bE\left(\phi(t)\right)= 6u^\phi(t).\]
That $u^\phi\in L^1(\R)$ and $u^\phi(t)\rightarrow0$ as $|t|\rightarrow \infty$ was established in \cite[Lemma 20]{CH2008}, and so the corresponding result for $e^{\gamma t}J_1$ also holds. For $e^{\gamma t}J_2$, we consider the cases $t\leq0$ and $t\geq 0$ separately. In particular, we have $e^{\gamma t}J_2\leq e^{\gamma t} M^2$, which clearly demonstrates that $e^{\gamma t}J_2\in L^1((-\infty,0])$ and $e^{\gamma t}J_2\rightarrow 0$ as $t\rightarrow -\infty$. Furthermore, defining $\hat{z}^{\phi}(t):=z^\phi(t)-z^\phi(\infty)$ as in the previous result and recalling once again that $|\hat{z}^{\phi}(t)|\leq C e^{- \gamma t}$, we are able to deduce that
\[e^{\gamma t}J_2=e^{\gamma t}\mathrm{Var}\left(\sum_{j=1}^3\Delta_{j}\hat{z}^{\phi}(t-\sigma_{j})\right)
\leq e^{\gamma t}\left(3Ce^{-\gamma t}\right)^2=Ce^{-\gamma t},\]
which confirms that $e^{\gamma t}J_2\in L^1([0,\infty))$ and $e^{\gamma t}J_2\rightarrow 0$ as $t\rightarrow\infty$. Finally, for $e^{\gamma t}J_3$ we proceed as follows:
\begin{eqnarray*}
e^{\gamma t}J_3&\leq &  3^{1/2}e^{-\gamma t}\left(\bE (\phi(t)^2)\bE\left(\sum_{j=1}^3 \bar{Z}_{j}(t-\sigma_{j})^2\right)\right)^{1/2}\\
&\leq & C e^{-\gamma t/2} \left(\bE (\phi(t))\bE\left(\sum_{j=1}^3 \Delta_{j} v(t-\sigma_{j})\right)\right)^{1/2}\\
&\leq & C u^\phi(t)^{1/2},
\end{eqnarray*}
where for the final inequality we use the fact that $v$ is bounded (Lemma \ref{propsa}). Now, from the proof of \cite[Lemma 20]{CH2008}, it can be seen that  $(u^\phi)^{1/2}\in L^1(\R)$ (and we have already noted that $u^\phi(t)\rightarrow 0$ as $|t|\rightarrow \infty$). Consequently, we have the desired result for $e^{\gamma t} J_3$. The lemma follows.
\end{proof}

\begin{lemma} The measure $\nu_\gamma$ is a non-atomic Borel probability measure on $[0,\infty)$ and also $\int_0^\infty t\nu_\gamma(dt)=1$.
\end{lemma}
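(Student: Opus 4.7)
The plan is to exploit the explicit distributional description of the birth times: $\sigma_i=-\tfrac32\ln\Delta_i$ where $(\Delta_1,\Delta_2,\Delta_3)\sim \mathrm{Dirichlet}(\tfrac12,\tfrac12,\tfrac12)$, together with the convenient coincidence $e^{-\gamma\sigma_i}=\Delta_i$ (since $\gamma=2/3$).

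First, I would verify the non-atomicity and support statements. Each marginal $\Delta_i\sim\mathrm{Beta}(\tfrac12,1)$ has a density on $(0,1)$, hence $\sigma_i=-\tfrac32\ln\Delta_i$ is an absolutely continuous positive random variable. Thus each measure $\mathbf{P}(\sigma_i\in\cdot)$ is a non-atomic Borel measure on $(0,\infty)$, and so is their finite weighted sum $\nu_\gamma$. Total mass is immediate:
$$\nu_\gamma([0,\infty))=\sum_{i=1}^3 \mathbf{E}\bigl[e^{-\gamma\sigma_i}\bigr]=\sum_{i=1}^3\mathbf{E}\Delta_i=1,$$
which also re-confirms that $\gamma=2/3$ is the Malthusian parameter.

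Second, I would compute the first moment by symmetry and a direct Beta integral. Namely,
$$\int_0^\infty t\,\nu_\gamma(dt)=\sum_{i=1}^3\mathbf{E}\bigl[\sigma_i e^{-\gamma\sigma_i}\bigr]=3\,\mathbf{E}[\sigma_1\Delta_1]=-\tfrac{9}{2}\mathbf{E}[\Delta_1\ln\Delta_1].$$
Since $\Delta_1$ has density $\tfrac12 x^{-1/2}$ on $(0,1)$, the standard formula $\int_0^1 x^a\ln x\,dx=-(a+1)^{-2}$ with $a=\tfrac12$ gives
$$\mathbf{E}[\Delta_1\ln\Delta_1]=\tfrac12\int_0^1 x^{1/2}\ln x\,dx=-\tfrac{2}{9},$$
yielding $\int_0^\infty t\,\nu_\gamma(dt)=1$, as required.

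There is no real obstacle here: every claim reduces to a one-line calculation involving the Beta/Dirichlet density. (As a sanity check, one could alternatively express the moment as $-\gamma^{-1}\psi'(1)$ using \eqref{phidef} with $\alpha_0=3/2$, $\alpha_i=1/2$, and use $\psi(\theta)=\tfrac{3}{1+2\theta}$ for this Dirichlet process, which gives $\psi'(1)=-2/3$ and hence the same value $\mu_1=1$.)
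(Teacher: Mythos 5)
Your proof is correct, and since the paper simply states ``the proof of this lemma is straightforward and omitted,'' you are supplying exactly the kind of elementary computation the authors had in mind. All the steps check out: the marginal $\Delta_i\sim\mathrm{Beta}(\tfrac12,1)$ with density $\tfrac12 x^{-1/2}$ on $(0,1)$ gives absolute continuity of each $\bP(\sigma_i\in\cdot)$ and hence non-atomicity of $\nu_\gamma$; the total mass $\sum_i\bE\Delta_i=1$ follows from the simplex constraint (and is just the Malthusian equation at $\theta=1$); and your Beta integral for $\bE[\Delta_1\ln\Delta_1]=-\tfrac29$ gives $\mu_1=1$. Your sanity check via $\mu_1=-\gamma^{-1}\psi'(1)$ with $\psi(\theta)=3/(1+2\theta)$ is a nice confirmation, and in fact is the cleanest route, since $\psi$ is already written down in the paper.
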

\begin{proof} The proof of this lemma is straightforward and omitted.
\end{proof}

In view of the preceding three lemmas, the following result is an immediate application of the double-sided renewal theorem of \cite[Theorem 5]{Karlin1955}.

\begin{proposition} \label{varconv} The function $v$ converges as $t\rightarrow \infty$ to the finite constant $v(\infty):=\int_{-\infty}^{\infty}r(t)dt$.
\end{proposition}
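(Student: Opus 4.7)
The plan is to verify that the renewal equation (\ref{vrenewal}), namely
\[ v(t) = r(t) + \int_0^\infty v(t-s)\,\nu_\gamma(ds), \]
meets all the hypotheses of the two-sided renewal theorem of \cite[Theorem 5]{Karlin1955}, and then read off the stated limit. In the $\Delta_n$-GBP setting with Dirichlet weights the measure $\nu_\gamma$ is already a probability measure (by Malthusian normalisation), so the renewal-theoretic framework applies directly.

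First I would record what the preceding three lemmas give. The third lemma states that $\nu_\gamma$ is a non-atomic Borel probability measure on $[0,\infty)$ with $\mu_1 = \int_0^\infty t\,\nu_\gamma(dt) = 1$; non-atomicity implies that $\nu_\gamma$ is non-lattice. The second lemma gives that $r \in L^1(\mathbb{R})$ with $r(t) \to 0$ as $|t| \to \infty$, so $r$ plays the role of the (integrable) forcing term, and in particular it is directly Riemann integrable once combined with its boundedness (which follows from the explicit estimates in its proof). The first lemma says that $v$ itself is bounded and measurable, with $v(t)\to 0$ as $t\to -\infty$; this boundedness on $(-\infty,0]$ together with the boundedness on $[0,\infty)$ established in Lemma~\ref{propsa} supplies the control on the solution required to invoke Karlin's theorem in its two-sided form.

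With these ingredients in place, \cite[Theorem 5]{Karlin1955} yields
\[ v(t) \longrightarrow \mu_1^{-1} \int_{-\infty}^\infty r(s)\,ds = \int_{-\infty}^\infty r(s)\,ds, \]
as $t\to\infty$, using $\mu_1=1$. This is exactly $v(\infty)$ as defined in the proposition, and the limit is finite because $r\in L^1(\mathbb{R})$.

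I do not anticipate a genuine obstacle here, since the proof is really just a verification step: the three preparatory lemmas were designed precisely to check the hypotheses of the renewal theorem. The only subtle point worth a line of justification is that non-atomicity of $\nu_\gamma$ implies the non-lattice hypothesis of Karlin's theorem, and that the boundedness of $v$ (rather than merely local integrability) is what legitimises the two-sided version; both are handled by the earlier lemmas.
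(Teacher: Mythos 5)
Your proposal is correct and follows essentially the same route as the paper: the paper's proof is exactly one line, observing that the three preceding lemmas supply the hypotheses needed to apply the double-sided renewal theorem of \cite[Theorem 5]{Karlin1955} to the renewal equation \eqref{vrenewal}, which is precisely the verification you carry out. Your additional remarks on non-atomicity implying non-lattice and on direct Riemann integrability are sensible elaborations of what the paper leaves implicit.
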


\subsection{Verification of Conditions \ref{cond::integrabilityCondCLT} and \ref{cond::momentCondCLT}}

It now only remains for us to check Conditions \ref{cond::integrabilityCondCLT} and \ref{cond::momentCondCLT} before we can apply Theorem \ref{thm::CLT} to deduce the desired central limit theorem for the eigenvalue counting function of the Brownian CRT. We start by working towards an estimate for the third moment of $\tilde{Z}$, which will confirm Condition \ref{cond::momentCondCLT}, and, to this end, we use another recursion argument. This is similar to the proof of Lemma \ref{lem::thirdMomentPositiveTimes}, but more involved due to the lack of a uniform bound for $\bar{\zeta}$. Specifically, iterating \eqref{z3w}, we deduce that for any $k\in\mathbb{N}$
\[\bar Z(t)^3 = \sum_{|i|<k}W_i(t-\sigma_i) + \sum_{i\in \Sigma_k}\bar Z_i(t-\sigma_i)^3.\]
The following lemma establishes that the expectation of the remainder term here converges to 0 as $k\rightarrow \infty$.

\begin{lemma}\label{yrem} For each $t\in \mathbb{R}$,
\[\lim_{k\rightarrow\infty}\bE\left(\sum_{i\in \Sigma_k}\left|\bar{Z}_i(t-\sigma_i)\right|^3\right)=0.\]
\end{lemma}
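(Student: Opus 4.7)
The plan is to reduce the statement to an exponential growth estimate for $g(s):=\mathbf{E}|\bar Z(s)|^3$ and then exploit the Malthusian decay provided by Lemma \ref{lem::birthTimesSumControl}. Fixing $k\in\mathbb{N}$ and conditioning on $\mathcal{F}_k:=\sigma(\Delta_{i|j}:|i|=k,\,j\leq k)$, the birth times $\{\sigma_i\}_{i\in\Sigma_k}$ are $\mathcal{F}_k$-measurable while $\{\bar Z_i\}_{i\in\Sigma_k}$ are i.i.d.\ copies of $\bar Z$ independent of $\mathcal{F}_k$. Hence, by the tower property,
\[\mathbf{E}\sum_{i\in\Sigma_k}|\bar Z_i(t-\sigma_i)|^3 = \mathbf{E}\sum_{i\in\Sigma_k}g(t-\sigma_i).\]

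The central intermediate step is an exponential bound $g(s)\leq Ce^{\beta s}$ for some $\beta>\gamma$ (it will be convenient to take $\beta=3\gamma$). Since $\bar Z(s)=Z^\phi(s)-e^{\gamma s}z^\phi(s)$ with $Z^\phi(s)\geq 0$ and $0\leq z^\phi(s)\leq M$ by \eqref{mdef}, we obtain the deterministic inequality $|\bar Z(s)|\leq Z^\phi(s)+Me^{\gamma s}$, so that
\[g(s) \leq 4\,\mathbf{E}Z^\phi(s)^3 + 4M^3 e^{3\gamma s}.\]
I would then iterate the additive identity $Z^\phi(s)=\phi_\emptyset(s)+\sum_{j=1}^3 Z^\phi_j(s-\sigma_j)$, expand the cube, and bound each of the resulting mixed terms using the uniform bound $0\leq\phi\leq 6$, the bound $z^\phi\leq M$, and Lemma \ref{lem::birthTimesSumControl}. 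The crucial summability input for the Dirichlet$(\tfrac12,\tfrac12,\tfrac12)$ weights is the computation
\[\psi(3)=3\,\mathbf{E}\Delta_1^3 = 3\cdot\frac{\Gamma(7/2)\,\Gamma(3/2)}{\Gamma(1/2)\,\Gamma(9/2)} = \tfrac37 < 1,\]
which guarantees $\sum_{k\geq 0}\psi(3)^k<\infty$ and makes each of the cube-expansion sums finite. This scheme mirrors the variance computation in the proof of Lemma \ref{propsa}, but with the expansion of a cube in place of a square.

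Assuming such a bound $g(s)\leq Ce^{3\gamma s}$, Lemma \ref{lem::birthTimesSumControl} at $y=3$ finishes the argument via
\[\mathbf{E}\sum_{i\in\Sigma_k} g(t-\sigma_i) \leq Ce^{3\gamma t}\,\mathbf{E}\sum_{i\in\Sigma_k}e^{-3\gamma\sigma_i} = Ce^{3\gamma t}\,\psi(3)^k = Ce^{3\gamma t}\left(\tfrac37\right)^k,\]
which tends to $0$ as $k\to\infty$ for each fixed $t$. The main technical obstacle will be the third-moment growth bound for $Z^\phi$ uniformly in $s\in\mathbb{R}$: the excerpt quotes only the corresponding $L^2$ estimate from \cite[Lemma 4.4]{CH2010}, and carrying out the analogue for the cube requires juggling all the mixed terms $\phi_\emptyset^2\sum_j Z^\phi_j$, $\phi_\emptyset\sum_{j,\ell}Z^\phi_j Z^\phi_\ell$ and $\sum_{j,\ell,m}Z^\phi_j Z^\phi_\ell Z^\phi_m$ produced by expanding the cube, each to be estimated along the lines of the three terms $I_1,I_2,I_3$ appearing in the proof of Lemma \ref{propsa}.
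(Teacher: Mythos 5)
Your overall reduction is correct: after conditioning, the quantity equals $\mathbf{E}\sum_{i\in\Sigma_k}g(t-\sigma_i)$ where $g(s):=\mathbf{E}|\bar Z(s)|^3$, and an exponential growth estimate $g(s)\leq Ce^{\beta s}$ for some $\beta>\gamma$, fed into Lemma \ref{lem::birthTimesSumControl}, finishes the proof. But the route you propose to that estimate differs from the paper's, and the step you defer is exactly where the substance lies.

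The paper does not try to bound the third moment of $Z^\phi$ at all. Instead it writes $|\bar Z(t)|^3\leq|\bar Z(t)|\bigl(Z^\phi(t)^2+e^{2\gamma t}z^\phi(t)^2\bigr)$, applies Cauchy--Schwarz against the already-established variance bound $\mathbf{E}\bar Z(t)^2=e^{\gamma t}v(t)\leq Ce^{\gamma t}$ from Lemma \ref{propsa}, and is then left to control the \emph{fourth} moment $\mathbf{E}Z^\phi(t)^4$. That is shown to satisfy $\mathbf{E}Z^\phi(t)^4\leq Ce^{(4\gamma+\epsilon)t}$ for every $\epsilon>0$, giving $g(s)\leq Ce^{5\gamma s/2}(1\vee e^{\epsilon s})$ and hence the summability index $\psi(5/2)=1/2$ rather than your $\psi(3)=3/7$. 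Any index $>1$ works, so your target is admissible in principle; the Cauchy--Schwarz split is simply the way the paper actually carries out the heavy moment estimate.

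The gap is the one you yourself flag, and it is real. The tools you list---the uniform bound $0\leq\phi\leq6$, the bound $z^\phi\leq M$, and Lemma \ref{lem::birthTimesSumControl}---do not suffice to prove $\mathbf{E}Z^\phi(s)^3\leq Ce^{3\gamma s}$ uniformly in $s\in\mathbb{R}$. In the full expansion $Z^\phi(s)^3=\sum_{i,j,k\in\Sigma}\phi_i(s-\sigma_i)\phi_j(s-\sigma_j)\phi_k(s-\sigma_k)$, the summands are not conditionally independent when the indices lie on a common line of descent (the characteristics here are progeny-dependent), the crude bound $\phi\leq6$ discards all decay and leaves a divergent triple sum, and $z^\phi\leq M$ controls only first moments. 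The ingredient the paper actually uses is the refinement recorded at (\ref{etabound}), namely $\phi_i(t)\leq 6\bone_{\{t\geq-\ln\delta_i\}}\leq 6e^{\theta\gamma t}\delta_i^{\theta\gamma}$, which replaces each $\phi_i$ by a factor depending on $i$ only through $D_i^\theta\delta_i^{\theta\gamma}$ and converts the multiple sum into an explicit product over the tree skeleton spanning the indices, summable via $\psi$ after a small H\"older correction. That argument forces $\theta>1$ and hence an $\epsilon$-loss, so the best available bound is $\mathbf{E}Z^\phi(s)^3\leq Ce^{(3\gamma+\epsilon)s}$; your exact exponent $3\gamma$ is slightly too optimistic, though $\psi(3+\epsilon/\gamma)<1$ as well, so the conclusion survives. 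Finally, ``mirroring Lemma \ref{propsa}'' carries a circularity hazard if read literally: the centred cube decomposition $\bar Z(t)^3=\sum_iW_i(t-\sigma_i)$ exploited in Proposition \ref{moment3} is only justified \emph{after} the present lemma is proved, which is precisely why one must work with the uncentred $Z^\phi(s)^3$ here, and why the $I_1,I_2,I_3$ machinery of Lemma \ref{propsa} does not transplant directly.
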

\begin{proof} By Cauchy-Schwarz and Lemma \ref{propsa},
\begin{eqnarray}
\bE\left(\left|\bar Z(t)\right|^3\right)&\leq & \bE\left(\left|\bar Z(t)\right|\left(Z^\phi(t)^2+e^{2\gamma t} z^\phi(t)^2\right)\right)\nonumber\\
&\leq & Ce^{\gamma t/2}\left(\left(\bE(Z^\phi(t)^4)\right)^{1/2}+e^{2\gamma t}M^2\right).\label{ysplit}
\end{eqnarray}
Applying the characterisation of $Z^{\phi}(t)$ at (\ref{xchar}), we have that
\[\bE(Z^{\phi}(t)^4)= \sum_{i,j,k,l\in \Sigma}\bE\left(\phi_i(t-\sigma_i)\phi_j(t-\sigma_j)\phi_k(t-\sigma_k)\phi_l(t-\sigma_l)\right).\]
Since
\begin{equation}\label{etabound}
\phi_i(t)\leq 6\bone_{\{t\geq -\ln \delta_i\}}\leq 6e^{\theta \gamma t}\delta_i^{\theta\gamma},
\end{equation}
where $\delta_i$ is defined to be the diameter of the metric space $(\mathcal{T}_i,d_{\mathcal{T}_i})$, which is a random variable with a finite positive moments of all orders (see proof of \cite[Lemma 20]{CH2008}), it follows that, for any $\theta,\epsilon>0$,
\begin{eqnarray}
{\bE(Z^{\phi}(t)^4)}&\leq& Ce^{4\theta\gamma t}\sum_{i,j,k,l\in \Sigma} \bE\left(D_i^\theta D_j^\theta D_k^\theta D_l^\theta \delta_i^{\theta\gamma}\delta_j^{\theta\gamma}\delta_k^{\theta\gamma}\delta_l^{\theta\gamma}\right)\nonumber\\
&\leq & C e^{4\theta\gamma t}\sum_{i,j,k,l\in \Sigma} \bE\left(D_i^{\theta(1+\epsilon)}D_j^{\theta(1+\epsilon)}D_k^{\theta(1+\epsilon)}D_l^{\theta(1+\epsilon)}\right)^{1/(1+\epsilon)}.\label{x4bound}
\end{eqnarray}

\begin{figure}
\begin{center}
 \scalebox{0.4}{\includegraphics{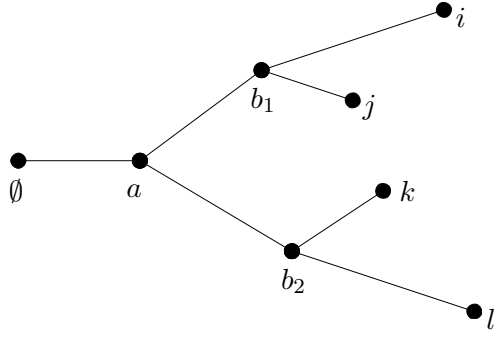}}
 \rput(-6.25,1.7){$\emptyset$}
 \rput(-4.7,1.7){$a$}
 \rput(-3,2.9){$b_1$}
 \rput(-2.6,.5){$b_2$}
 \rput(-.4,4){$i$}
 \rput(-1.6,2.8){$j$}
 \rput(-1.1,1.7){$k$}
 \rput(0,0){$l$}
\end{center}
 \caption{A possible configuration of $i,j,k,l$.}\label{config}
\end{figure}

Now, suppose $\Sigma$ is viewed as a graph tree with edges between $i|_{|i|-1}$ and $i$ for each $i\in \Sigma\backslash \{\emptyset\}$, and the subtree of $\Sigma$ spanning $i,j,k,l$ (and the root $\emptyset$) has shape as shown in Figure \ref{config}, where we assume that $a,b_1,b_2,i,j,k,l$ are distinct. It is then straightforward to check from the independence structure of $(D_i)_{i\in \Sigma}$ that $\bE(D_i^{\theta(1+\epsilon)}D_j^{\theta(1+\epsilon)}D_k^{\theta(1+\epsilon)}D_l^{\theta(1+\epsilon)})$ is bounded above by
\begin{eqnarray*}
&\bE\left(D_a^{4\theta(1+\epsilon)}\right)
\bE\left(\frac{D_{b_1}^{2\theta(1+\epsilon)}}{D_{b_1|_{|a|+1}}^{2\theta(1+\epsilon)}}\right)
\bE\left(\frac{D_{b_2}^{2\theta(1+\epsilon)}}{D_{b_2|_{|a|+1}}^{2\theta(1+\epsilon)}}\right)&\\
&\times\bE\left(\frac{D_{i}^{\theta(1+\epsilon)}}{D_{i|_{|b_1|+1}}^{\theta(1+\epsilon)}}\right)
\bE\left(\frac{D_{j}^{\theta(1+\epsilon)}}{D_{j|_{|b_1|+1}}^{\theta(1+\epsilon)}}\right)
\bE\left(\frac{D_{k}^{\theta(1+\epsilon)}}{D_{k|_{|b_2|+1}}^{\theta(1+\epsilon)}}\right)
\bE\left(\frac{D_{l}^{\theta(1+\epsilon)}}{D_{l|_{|b_2|+1}}^{\theta(1+\epsilon)}}\right),&
\end{eqnarray*}
which is equal to
\small
\begin{equation}\label{3terms}
\left(\frac{\psi(4\theta(1+\epsilon))}{3}\right)^{|a|}
\left(\frac{\psi(2\theta(1+\epsilon))}{3}\right)^{|b_1|+|b_2|-2|a|-2}
\left(\frac{\psi(\theta(1+\epsilon))}{3}\right)^{|i|+|j|+|k|+|l|-2|b_1|-2|b_2|-4},
\end{equation}
\normalsize
where we again recall $\psi(x)=3/(1+2x)$ for $x>-1/2$. Since $\psi(\theta)<1$ for any $\theta>1$ and  $3^{\epsilon/(1+\epsilon)}\psi(\theta(1+\epsilon))^{1/(1+\epsilon)}\rightarrow \psi(\theta)$ as $\epsilon\rightarrow0$, if we are given any $\theta>1$, then it is possible to choose $\epsilon>0$ such that $3 ({\psi(\theta(1+\epsilon))}/{3})^{{1}/{1+\epsilon}}<1$. By summing (\ref{3terms}) over all suitable $a,b_1,b_2,i,j,k,l$ for such a choice of $\theta$ and $\epsilon$, it follows that the terms of the form considered contribute at most the finite amount
\small
\[\left(\frac{1}{1-3^{\frac{\epsilon}{1+\epsilon}}\psi(4\theta(1+\epsilon))^{\frac{1}{1+\epsilon}}}\right)
\left(\frac{3}{1-3^{\frac{\epsilon}{1+\epsilon}}\psi(2\theta(1+\epsilon))^{\frac{1}{1+\epsilon}}}\right)^2
\left(\frac{3}{1-3^{\frac{\epsilon}{1+\epsilon}}\psi(\theta(1+\epsilon))^{\frac{1}{1+\epsilon}}}\right)^4\]
\normalsize
to the sum at (\ref{x4bound}). For other configurations of $i,j,k,l$, it is possible to proceed similarly, and consequently prove that, for any $\epsilon>0$, $\bE(Z^\phi(t)^4)\leq Ce^{(4\gamma+\epsilon)t}$.

Returning to (\ref{ysplit}), the bound of the previous paragraph implies $\bE(|\bar{Z}(t)|^3)\leq Ce^{5\gamma t/2}(1\vee e^{\epsilon t})$, and so
\[\bE\left(\sum_{i\in \Sigma_k}\left|\bar{Z}_i(t-\sigma_i)\right|^3\right)
\leq  C e^{5\gamma t/2}(1\vee e^{\epsilon t}) \bE\left(\sum_{i\in \Sigma_k}D_i^{5/2}\right)\leq C\psi(5/2)^k\]
which converges to 0 as $k\rightarrow \infty$.
\end{proof}

The first main result of this section is the following, which establishes that Condition \ref{cond::momentCondCLT} holds in the present setting.

\begin{proposition}\label{moment3} We have that $\sup_{t\in\mathbb{R}}\bE(|\tilde{Z}(t)|^3)<\infty$.
\end{proposition}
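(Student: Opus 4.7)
It suffices to show $e^{-3\gamma t/2}\bE|\bar{Z}(t)|^3$ is bounded uniformly in $t\in\mathbb{R}$. The case $t\leq 0$ follows immediately from the bound $\bE|\bar{Z}(t)|^3\leq Ce^{5\gamma t/2}(1\vee e^{\epsilon t})$ derived in the proof of Lemma~\ref{yrem}, which gives $e^{-3\gamma t/2}\bE|\bar{Z}(t)|^3\leq Ce^{\gamma t}\leq C$ on this range. For $t\geq 0$, the plan is to adapt the strategy of Lemma~\ref{lem::thirdMomentPositiveTimes}: iterating the recursion $\bar{Z}(t)^3=W_\emptyset(t)+\sum_{i=1}^3\bar{Z}_i(t-\sigma_i)^3$ and using Lemma~\ref{yrem} to pass the remainder to $0$ in $L^1$ yields
\[
\bE|\bar{Z}(t)|^3\leq \bE\sum_{x\in\Sigma}|W_x(t-\sigma_x)|.
\]
Expanding $|W_x|$ produces four types of summands: $|\bar{\zeta}_x|^3$; $|\bar{\zeta}_x|^2\sum_i|\bar{Z}_{x,i}|$; $|\bar{\zeta}_x|\sum_{i,j}|\bar{Z}_{x,i}\bar{Z}_{x,j}|$; and the non-diagonal triple product $\sum_{i,j,k}|\bar{Z}_{x,i}\bar{Z}_{x,j}\bar{Z}_{x,k}|$ over indices $(i,j,k)$ not all equal.

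\textbf{Bounding the four pieces.} Each of the four sums is to be controlled in analogy with the four corresponding estimates in the proof of Lemma~\ref{lem::thirdMomentPositiveTimes}, drawing on three ingredients. First, the birth-time sum control of Lemma~\ref{lem::birthTimesSumControl}: a direct Beta-function computation for the Dirichlet$(\tfrac12,\tfrac12,\tfrac12)$ weights of the CRT gives $\psi(3/2)=3/4$, $\psi(2)=3/5$ and $\psi(3)=3/7$, all strictly less than $1$, so $\bE\sum_{x\in\Sigma}e^{-y\gamma\sigma_x}=\sum_{k\geq 0}\psi(y)^k<\infty$ for $y\in\{3/2,2,3\}$. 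Second, the variance control $\bE[\bar{Z}_{x,i}(t-\sigma_{x,i})^2\mid\mathcal{F}_x]\leq Ce^{\gamma(t-\sigma_{x,i})}$, provided by the boundedness of $v$ established in Lemma~\ref{propsa} and Proposition~\ref{varconv}. Third, estimates on $|\bar{\zeta}|$: combining \eqref{eq::defCentring} with $\phi(t)\leq 6$, the bound $|z^\phi(t)-z^\phi(\infty)|\leq Ce^{-\gamma t}$ from \cite{CH2010}, and the $\Delta_n$-GBP identity $\sum_{i=1}^3 e^{-\gamma\sigma_i}=1$, one obtains the deterministic almost sure bound $|\bar{\zeta}(t)|\leq C$ for all $t\geq 0$; while for $t<0$ the tail estimate $\phi(t)\leq 6\delta^{\theta\gamma}e^{\theta\gamma t}$ from \eqref{etabound} with $\theta\in(\tfrac12,1)$, together with finite positive moments of the diameter $\delta$, delivers the moment bound $\bE|\bar{\zeta}(t)|^3\leq Ce^{3\gamma t/2}$.

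\textbf{Main obstacle.} The principal technical difficulty, compared with Lemma~\ref{lem::thirdMomentPositiveTimes}, is the absence of a uniform almost sure bound of the form $|\bar{\zeta}(t)|\leq c_1 e^{\gamma t/2}$: here $\phi_x$ depends on the full progeny of $x$ through the eigenvalue counts of its subtrees, so $\bar{\zeta}_x$ is neither $\mathcal{F}_x$-measurable nor independent of the variables $\bar{Z}_{x,i}$, and the pointwise factorisation used in Lemma~\ref{lem::thirdMomentPositiveTimes} is unavailable. The deterministic bound in the third ingredient above allows the argument of Lemma~\ref{lem::thirdMomentPositiveTimes} to go through unchanged on the event $\{\sigma_x\leq t\}$, where $\bar{\zeta}_x$ is evaluated at a non-negative argument. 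On the complementary region $\{\sigma_x>t\}$, the plan is to take expectations first and substitute the moment bound $\bE|\bar{\zeta}(s)|^3\leq Ce^{3\gamma s/2}$ for the pointwise one, combining this with Cauchy--Schwarz applied to the $|\bar{Z}_{x,i}|$ factors conditionally on the birth times, via the variance bound. Once this substitution is carried out, each of the four sums is dominated by a convergent geometric series in $\psi(3/2)$, $\psi(2)$, or $\psi(3)$, yielding the required uniform bound on $e^{-3\gamma t/2}\bE|\bar{Z}(t)|^3$ for $t\geq 0$.
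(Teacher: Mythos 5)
Your decomposition of the problem, and your identification of the main obstacle — that $\bar\zeta_x$ depends on the entire progeny of $x$ and is therefore not independent of the $\bar Z_{x,i}$, so the pointwise factorisation from Lemma~\ref{lem::thirdMomentPositiveTimes} is unavailable — are both correct, and your split into $\{\sigma_x\leq t\}$ and $\{\sigma_x>t\}$ is a legitimate alternative to the paper's uniform route. On $\{\sigma_x\leq t\}$ the deterministic bound $|\bar\zeta_x|\leq C$ removes the dependence outright, and (after borrowing $e^{a\gamma\sigma_x}\leq e^{a\gamma t}$ to boost the exponent) all four sums do reduce to a convergent $\sum_k\psi(3/2)^k$. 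So the $\sigma_x\leq t$ half, and the fourth term \eqref{eq::fourthBitThirdMoment} (no $\bar\zeta_x$ at all), are handled by your argument.

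The gap is on $\{\sigma_x>t\}$ for the mixed terms, above all \eqref{eq::thirdBitThirdMoment}. There, "take expectations first and substitute $\bE|\bar\zeta(s)|^3\leq Ce^{3\gamma s/2}$, combined with Cauchy--Schwarz on the $|\bar Z_{x,i}|$ via the variance bound" does not close, because any H\"older/Cauchy--Schwarz split of $\bE\bigl(|\bar\zeta_x|\,|\bar Z_{x,i}|\,|\bar Z_{x,j}|\bigr)$ that isolates a second-moment of $\bar Z$ necessarily leaves a moment of $\bar Z$ of order strictly above $2$ on the other side — either the third moment (circular, since that is what we are proving) or the fourth. The paper breaks this circularity by \emph{first} deriving the a priori polynomial growth bound $\bE|\bar Z(t)|^3\leq Ce^{17t/12}(e^{\epsilon t}\vee 1)$ from $\bE\bigl(Z^\phi(t)^{10}\bigr)\leq Ce^{(10\gamma+\epsilon)t}$ (an extension of the expansion in the proof of Lemma~\ref{yrem}), and then feeding this into a conditional H\"older estimate with exponents $(3,3,3)$; this auxiliary high-moment estimate is absent from your proposal. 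There is also a second, more subtle issue you do not address: your pointwise bound for negative arguments involves $\delta_x$, and conditional moments $\bE[\delta_x^c\mid\mathcal F_{|x|+1}]$ are needed to run conditional Cauchy--Schwarz — but $\delta_x$ is not independent of $\mathcal F_{|x|+1}$. The paper circumvents this by replacing $\delta_x^\gamma$ with (a multiple of) $\delta_{x1}^\gamma+\delta_{x2}^\gamma+\delta_{x3}^\gamma$, the children's diameters, which by Lemma~\ref{split} \emph{are} independent of $\mathcal F_{|x|+1}$. Both ideas — the a priori third-moment growth bound and the passage from $\delta_x$ to $\delta_{xj}$ — are essential to the paper's argument and are not recoverable from the ingredients you list.
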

\begin{proof} As a result of the previous lemma, we have that $\bar Z(t)^3 = \sum_{i\in\Sigma}W_i(t-\sigma_i)$. Hence, from the definition of $W$, we deduce that $\bE(|\bar{Z}(t)|^3)\leq \bE(K_1)+\bE(K_2)+\bE(K_3)+\bE(K_4)$, where $K_1$, $K_2$, $K_3$, $K_4$ are defined to be the terms appearing in equations (\ref{eq::firstBitThridMoment}) to (\ref{eq::fourthBitThirdMoment}) respectively, and it will be our goal to show that $e^{-t}\bE(K_i)$ is bounded for $i=1,2,3,4$.

Applying the bound for $\phi$ at (\ref{etabound}) and the estimate $|\hat{z}^{\phi}(t)|=|z^\phi(t)-z^\phi(\infty)|\leq Ce^{-\gamma t}$ (as well as recalling that $z^\phi$ is a bounded function), it is straightforward to deduce the existence of a deterministic constant $C$ such that, $\bP$-a.s., $|\bar\zeta_i(t)|\leq C(1\wedge(e^{\gamma t}(1+\delta_i^{\gamma})))$. This bound implies $|\bar\zeta_i(t)|=|\bar\zeta_i(t)|^{1/2}|\bar\zeta_i(t)|^{1/2}\leq Ce^{\gamma t/2}(1+\delta_i^{\gamma/2})$, and so $e^{-t}\bE(K_1)$ is bounded above by
\[ C e^{-t}\bE\left(\sum_{i\in \Sigma} e^{t-\sigma_i}(1+\delta_i)\right)
= C \sum_{i\in \Sigma}\bE\left(D_i^{3/2}\right)\bE(1+\delta_i)
=C \sum_{k=0}^\infty \psi(3/2)^k,\]
which is finite, because $\psi(3/2)<1$.

Secondly, we proceed similarly to obtain that
\begin{eqnarray*}
e^{-t}\bE(K_2)&\leq & Ce^{-t} \bE\left(\sum_{i\in \Sigma}e^{\gamma(t-\sigma_i)}(1+\delta_i^\gamma)\sum_{j=1}^3   |\bar{Z}_{ij}(t-\sigma_{ij})|\right)\\
&\leq &Ce^{-t/3} \bE\left(\sum_{i\in \Sigma}D_i\sum_{j=1}^3
\bE\left( (1+\delta_{i1}^\gamma+\delta_{i2}^\gamma+\delta_{i3}^\gamma)\bar{Z}_{ij}(t-\sigma_{ij})\vline \mathcal{F}_{|i|+1}\right)\right)\\
&\leq&Ce^{-t/3} \bE\left(\sum_{i\in \Sigma}D_i\sum_{j=1}^3  \bE\left( \bar{Z}_{ij}(t-\sigma_{ij})^2\vline \mathcal{F}_{|i|+1}\right)^{1/2}\right)\\
&\leq&Ce^{-t/3} \bE\left(\sum_{i\in \Sigma}D_i\sum_{j=1}^3
\bE(e^{(t-\sigma_{ij})/3})\right)\\
&\leq&C \sum_{k=0}^\infty \psi(3/2)^k,
\end{eqnarray*}
where the third inequality is a conditional Cauchy-Schwarz estimate (we also apply the fact that the moments of $\delta_i$ are finite), and to deduce the fourth we use Lemma \ref{propsa}.

For the third term, we start by observing that, similarly to (\ref{ysplit}), $\bE(|\bar{Z}(t)|^3)$ is bounded above by
\[\bE\left(\left|\bar{Z}(t)\right|^{7/4}\left(Z^\phi(t)^{5/4}+e^{5\gamma t/4}z^\phi(t)^{5/4}\right)\right)\leq  Ce^{7\gamma t/8}\left(\left(\bE(Z^\phi(t)^{10})\right)^{1/8}+ e^{5\gamma t/4}M^{5/4}\right).\]
By making the obvious extensions to the argument applied in the proof of Lemma \ref{yrem}, it is possible to check that, for any $\epsilon>0$, $\bE(Z^\phi(t)^{10})\leq Ce^{(10\gamma+\epsilon)t}$, and hence $\bE(|\bar{Z}(t)|^3)\leq Ce^{17t/12}(e^{\epsilon t}\vee 1)$. For any $a\in[0,1]$, we also have that $|\bar\zeta_i(t)|=|\bar\zeta_i(t)|^{a}|\bar\zeta_i(t)|^{1-a}\leq Ce^{(1-a)\gamma t}(1+\delta_i^{(1-a)\gamma})$. Putting these bounds together yields
\begin{eqnarray*}
e^{-t}\bE(K_3)&\leq & Ce^{-t}\bE\left(\sum_{i\in \Sigma}e^{(1-a)\gamma(t-\sigma_i)}(1+\delta_i^{(1-a)\gamma})\sum_{j,k=1}^3  |\bar{Z}_{ij}(t-\sigma_{ij})\bar{Z}_{ik}(t-\sigma_{ik})|\right)\\
&\leq&Ce^{-(1-(1-a)\gamma)t}\bE\left(\sum_{i\in \Sigma}D_i^{1-a}
\sum_{j,k=1}^3    \bE\left(|\bar{Z}_{ij}(t-\sigma_{ij})|^3\vline \mathcal{F}_{|i|+1}\right)^{1/3}\right.\\
&&\hspace{130pt}\left.\times{\vphantom{\sum_{k=1}^3 }}\bE\left(|\bar{Z}_{ik}(t-\sigma_{ik})|^3\vline \mathcal{F}_{|i|+1}\right)^{1/3}\right)\\
&\leq &Ce^{-(1-(\frac{29}{12}-a)\gamma)t}(e^{\gamma\epsilon t}\vee 1)\bE\left(\sum_{i\in \Sigma}D_i^{\frac{29}{12}-a}\right)\\
&=&Ce^{-(1-(\frac{29}{12}-a)\gamma)t}(e^{\gamma\epsilon t}\vee 1)\sum_{k=0}^\infty \psi\left(\frac{29}{12}-a\right)^k,
\end{eqnarray*}
where the second inequality is an application of H\"{o}lder (and we bound the $\delta_i$ term similarly to how this was controlled when estimating $K_2$ above). If $a=\frac{11}{12}$, then for $t\leq 0$ we obtain from this that $e^{-t}K_3\leq C\sum_{k=0}^\infty\psi(3/2)^k<\infty$. If $a=1$, then it is possible to choose $\epsilon$ small enough so that the above bound implies, for $t\geq 0$, $e^{-t}K_3\leq C\sum_{k=0}^\infty\psi(17/12)^k<\infty$.

Finally, we can proceed as in the proof of Lemma \ref{lem::birthTimesSumControl} to deduce that $e^{-t}\bE(K_4)\leq C \sum_{k=0}^\infty \psi(3/2)^k$. The additional input needed to do this is provided by Lemma \ref{propsa} again. This completes the proof of the proposition.
\end{proof}

From the proof of the previous result, we have that $|\bar{\zeta}_i(t)|\leq C$ for some deterministic constant $C$. Hence we can deduce Condition \ref{cond::integrabilityCondCLT} by applying the same argument as that used to establish Lemma \ref{lem::checkIntegrabilityCondition}. We simply state the conclusion.

\begin{proposition}\label{cond26crt} For every $\epsilon \in (0, 1/2)$,
\[e^{-\gamma t/2} \sum_{\sigma_i \leq \epsilon t} \bar \zeta_i(t- \sigma_i) \to 0,\]
in probability as $t \to \infty$.
\end{proposition}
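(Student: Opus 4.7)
The plan is to follow essentially the same argument as in Lemma \ref{lem::checkIntegrabilityCondition}, with the simplification that in the CRT setting we have the uniform deterministic bound $|\bar\zeta_i(t)| \leq C$ noted in the remark preceding the proposition. This uniform bound follows from Lemma \ref{lem::convergenceRate} applied with $\beta_1 = \gamma$, using the estimate $|z^\phi(t) - z^\phi(\infty)| \leq C e^{-\gamma t}$ recalled earlier from \cite{CH2010}, together with the boundedness of $\phi$.

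Exploiting this, the key estimate is
\[
\left| e^{-\gamma t/2} \sum_{\sigma_i \leq \epsilon t} \bar\zeta_i(t - \sigma_i) \right|
\leq C\, e^{-\gamma(1/2 - \epsilon) t}\cdot e^{-\gamma \epsilon t}\,\#\{i \in \Sigma : \sigma_i \leq \epsilon t\}.
\]
The second factor is handled by the weak law of large numbers of Theorem \ref{thm::NermanWLLN} applied to the auxiliary characteristic $\mathbf{1}_{[0,\infty)}(\cdot)$, whose corresponding counting process is precisely $\#\{i \in \Sigma : \sigma_i \leq t\}$. The hypotheses are readily checked: the associated $u$ equals $e^{-\gamma t}\mathbf{1}_{t\geq 0}$, which is directly Riemann integrable; the supremum moment condition is trivial since the characteristic is bounded by $1$; and $\nu_\gamma$ is non-lattice because the $\Dirichlet(1/2,1/2,1/2)$ weights admit a density on the simplex. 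Noting further that $M_\infty = 1$ for a $\Delta_n$-general branching process, the theorem yields
\[
e^{-\gamma s}\,\#\{i \in \Sigma : \sigma_i \leq s\} \to \frac{1}{\gamma \mu_1} \quad\text{in probability, as } s \to \infty.
\]

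Setting $s = \epsilon t$ shows that the second factor in the key estimate is bounded in probability. Since $\epsilon < 1/2$, the prefactor $e^{-\gamma(1/2 - \epsilon) t}$ vanishes as $t \to \infty$, so the entire expression tends to $0$ in probability, as required. I do not anticipate any genuine obstacle: all the non-trivial analytic input has already been packaged into the uniform boundedness of $\bar\zeta$ (extracted from the proof of Proposition \ref{moment3}) and the weak law of large numbers for the CMJ branching process.
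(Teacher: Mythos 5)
Your proof is correct and follows essentially the same route as the paper, which likewise derives the uniform bound $|\bar\zeta_i(t)|\leq C$ (the paper extracts it from the proof of Proposition \ref{moment3}, you via Lemma \ref{lem::convergenceRate} with $\beta_1=\gamma$ and the decay estimate from \cite{CH2010}) and then runs the argument of Lemma \ref{lem::checkIntegrabilityCondition}. Your write-up is in fact a touch more careful with the exponent bookkeeping than the paper's proof of Lemma \ref{lem::checkIntegrabilityCondition} (the factor multiplying the count should be $e^{-\gamma\epsilon t}$, as you have it, not $e^{-\epsilon t}$), and you correctly observe that the uniform bound allows the conclusion for every $\epsilon\in(0,1/2)$ rather than merely some $\epsilon$.
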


To complete the proof of Theorem \ref{crtclt}, note that, by definition and (\ref{dnbracket}),
\[\left|\frac{N_\mathcal{T}(\lambda)-\bE N_\mathcal{T}(\lambda)}{\lambda^{1/3}}-\tilde{Z}(\ln \lambda)\right|\leq 2\lambda^{-1/3}.\]
Hence Propositions \ref{varconv}, \ref{moment3} and \ref{cond26crt} allow us to apply Theorem \ref{thm::CLT} to deduce the result with
\[C_0:=z^\phi(\infty)\equiv \int_{-\infty}^{\infty}u^\phi(t)dt\in(0,\infty),\hspace{20pt}C_1:=v(\infty)\equiv
\int_{-\infty}^{\infty}z(t)dt\in[0,\infty).\]

\section*{Appendix}

The following table contains the approximate values of $\rho_\pm$ and $R$ for different values of $\alpha$ with $\gamma=\tfrac12$, as required in the proof of Theorem~\ref{thm:ex1}.

\begin{center}
\begin{tabular}{|c|c|c|}
  \hline
	$\alpha$		&		$\rho_\pm$		&		$R$		\\ \hline\hline
$	59	$	&	$	0.495347 \pm 9.10306i	$	&	$	0.0964835	$	\\ \hline
$	60	$	&	$	0.503788 \pm 9.1027 i	$	&	$	0.0970307	$	\\ \hline
$	61	$	&	$	0.511952 \pm 9.10235i	$	&	$	0.0975642	$	\\ \hline
$	62	$	&	$	0.519852 \pm 9.10199i 	$	&	$	0.0980839	$	\\ \hline
$	63	$	&	$	0.527501 \pm 9.10164i	$	&	$	0.0985906	$	\\ \hline
$	64	$	&	$	0.534909 \pm 9.1013i	$	&	$	0.0990848	$	\\ \hline
$	65	$	&	$	0.54209 \pm  9.10096i	$	&	$	0.0995668	$	\\ \hline
$	66	$	&	$	0.549052 \pm 9.10062i	$	&	$	0.100037	$	\\ \hline
$	67	$	&	$	0.555805 \pm 9.10028i	$	&	$	0.100496	$	\\ \hline
$	68	$	&	$	0.56236 \pm 9.09995i	$	&	$	0.100945	$	\\ \hline
$	69	$	&	$	0.568724 \pm 9.09963i	$	&	$	0.101382	$	\\ \hline
$	70	$	&	$	0.574906 \pm  9.09931i	$	&	$	0.10181	$	\\ \hline
$	71	$	&	$	0.580913 \pm 9.09899i	$	&	$	0.102228	$	\\ \hline
$	72	$	&	$	0.586753 \pm 9.09867i	$	&	$	0.102636	$	\\ \hline
$	73	$	&	$	0.592432 \pm 9.09836i	$	&	$	0.103034	$	\\ \hline
$	74	$	&	$	0.597958 \pm 9.09806i	$	&	$	0.103425	$	\\ \hline
$	75	$	&	$	0.603335 \pm 9.09776i	$	&	$	0.103806	$	\\ \hline
$	76	$	&	$	0.608571 \pm 9.09746i	$	&	$	0.10418	$	\\ \hline
$	77	$	&	$	0.613671 \pm 9.09717i	$	&	$	0.104545	$	\\ \hline
$	78	$	&	$	0.618639 \pm 9.09688i	$	&	$	0.104902	$	\\ \hline
$	79	$	&	$	0.623482 \pm 9.09659i	$	&	$	0.105252	$	\\ \hline
$	80	$	&	$	0.628203 \pm 9.09631i	$	&	$	0.105594	$	\\ \hline
\end{tabular}
\end{center}

\section*{Acknowledgments}

We would like to thank Mohsin Javed for help with the numerics and Charles Stone for providing useful references. The first author was supported by a Berrow Foundation scholarship and the Swiss National Science Foundation.

\end{document}